\newsavebox{\@brx}
\newcommand{\llangle}[1][]{\savebox{\@brx}{\(\m@th{#1\langle}\)}%
  \mathopen{\copy\@brx\kern-0.5\wd\@brx\usebox{\@brx}}}
\newcommand{\rrangle}[1][]{\savebox{\@brx}{\(\m@th{#1\rangle}\)}%
  \mathclose{\copy\@brx\kern-0.5\wd\@brx\usebox{\@brx}}}
\newtheorem{theorem}{Theorem}[section]
\newtheorem{lemma}[theorem]{Lemma}
\newtheorem{proposition}[theorem]{Proposition}
\theoremstyle{definition}
\newtheorem{definition}[theorem]{Definition}
\newtheorem{example}[theorem]{Example}
\newtheorem{remark}[theorem]{Remark}
\numberwithin{equation}{subsection}
\newtheorem*{ack}{Acknowledgement}
\newcommand{\interior}{\operatorname{int}}
\newcommand{\T}{\operatorname{T}}
\newcommand{\Homeo}{\operatorname{Homeo}}
\newcommand{\Mod}{\mathrm{MCG}}
\newcommand{\id}{\mathrm{id}}
\begin{document}
\title{Orderability of big mapping class groups}

\author{Pravin Kumar}
\email{pravin444enaj@gmail.com}
\address{Department of Mathematical Sciences, Indian Institute of Science Education and Research (IISER) Mohali, Sector 81,  S. A. S. Nagar, P. O. Manauli, Punjab 140306, India.}

\author{Apeksha Sanghi}
\email{apekshasanghi93@gmail.com}
\address{Department of Mathematical Sciences, Indian Institute of Science Education and Research (IISER) Mohali, Sector 81,  S. A. S. Nagar, P. O. Manauli, Punjab 140306, India.}

\author{Mahender Singh}
\email{mahender@iisermohali.ac.in}
\address{Department of Mathematical Sciences, Indian Institute of Science Education and Research (IISER) Mohali, Sector 81,  S. A. S. Nagar, P. O. Manauli, Punjab 140306, India.}

\subjclass[2020]{Primary 06F15, 20F60; Secondary 57K20}

\keywords{big mapping class group, ideal arc system, mapping class group, orderable group, stable Alexander system, infinite-type surface}

\begin{abstract}
We give an alternate proof of the left-orderability of the mapping class group of a connected oriented infinite-type surface with a non-empty boundary. Our main strategy involves the inductive construction of a countable stable Alexander system for the surface using a carefully chosen exhaustion by finite-type subsurfaces.  In fact, we prove that a generalised ideal arc system for the surface also induces a left-ordering on the big mapping class group. We then prove that two generalised ideal arc systems  determine the same left-ordering if and only if they are loosely isotopic. Finally, we prove that the  topology on the big mapping class group  is the same as the order topology induced by a left-ordering corresponding to an inductively constructed ideal arc system.
\end{abstract}
\maketitle

\section{Introduction}
The existence of a left, right or both-sided strict total ordering on a group has profound implications on its structure. For example, a left-orderable group cannot have torsion, and a bi-orderable group cannot have even generalised torsion (where a product of conjugates of a non-trivial element is trivial). From applications point of view, it is known that integral group rings of left-orderable groups are free of zero-divisors. Many groups arising in topology are left-orderable; for instance, the fundamental group of any connected surface, except for the projective plane or the Klein bottle, is bi-orderable  \cite{MR2141698}. Braid groups are notable examples of left-orderable groups that are not bi-orderable \cite{MR1214782}, whereas pure braid groups are bi-orderable  \cite{MR0975081}. In \cite{MR1756636}, Rourke and Wiest extended this result by showing that the mapping class group of a compact surface with non-empty boundary is left-orderable,  though it is generally not bi-orderable.
\par
Extensive research has been conducted on the orderability of 3-manifold groups, where left-orderability is quite common. Specifically, the fundamental groups of the complements of links in $\mathbb{S}^3$ are known to be left-orderable \cite{MR2141698}, but not all are bi-orderable \cite{MR1990838}. For example, the knot group of the figure-eight knot is bi-orderable, whereas the group of a non-trivial cable of any knot is not. Generally, a fibered knot has a bi-orderable knot group if all the roots of its Alexander polynomial are real and positive \cite{MR1990838}. There are infinitely many such fibered knots. For further reading, the recent monograph \cite{MR3560661} by Clay and Rolfsen explores the orderability of groups motivated by topology, such as fundamental groups of surfaces or 3-manifolds, braid and mapping class groups, and groups of homeomorphisms. Another monograph \cite{MR2463428} on the orderability of braid groups is also highly recommended.
\par

Dehornoy's ordering of the braid group was reinterpreted in \cite{MR1725462} in more geometrical terms. This construction was then generalised in \cite{MR1756636} to prove that the mapping class group of a compact surface with non-empty boundary is left-orderable. In \cite{MR2172491}, using hyperbolic geometry, Calegari proved the left-orderability of the mapping class group of the disk with any compact, totally disconnected subset removed. This result has been generalised by Feller, Hubbard and Turner \cite{bigsmall} by establishing left-orderability of the mapping class group of any infinite-type surface $S$ with non-empty boundary. They consider the set (which is uncountable) of homotopy classes of all essential arcs on $S$ starting from a fixed point on $\partial S$ and lift them to the universal cover of $S$. Their key idea is to define a strict total ordering on the set of these lifted arcs, which induces a mapping class group invariant strict total ordering on the original set of arcs on $S$. This strict total ordering is then used to define a left-ordering on the mapping class group of $S$. In this paper, 
we provide an alternative proof of this result using an inductively constructed countable stable Alexander system for the surface $S$ composed of non-isotopic disjoint ideal arcs, which is achieved through a carefully chosen exhaustion of $S$ by finite-type subsurfaces. Additionally, we compare the left-orderings induced by two (generalised) ideal arc systems, and use our construction to prove that the quotient topology on the big mapping class group coincides with the order topology induced by one of these orderings.
\par

The paper is organised as follows. In Section \ref{section preliminaries}, we recall some basic terminology and results that we need in latter sections. In Section \ref{Orderability of big MCG}, we prove that if $S$ is a connected oriented infinite-type surface with non-empty boundary, then its mapping class group $\Mod(S)$ is left-orderable (Theorem \ref{main theorem}).  This is achieved using an inductively constructed countable stable Alexander system for $S$. In Section \ref{section comparison of orderings}, we introduce a generalised ideal arc system for $S$ and prove that it also induces a left-ordering on $\Mod(S)$ (Proposition \ref{gen ideal arc implies left-orderability}). We then examine conditions under which two such ideal arc systems induce the same left-ordering. In fact, in Theorem \ref{thm:loose2} we prove that two generalised ideal arc systems for $S$ determine the same left-ordering on $\Mod(S)$ if and only if they are loosely isotopic.  As a consequence, we deduce that if $S$ is a connected oriented infinite genus surface with non-empty boundary, the the space of conjugacy classes of left-orderings on $\Mod(S)$ is infinite (Proposition \ref{space of orderings}). Finally, in Section \ref{section comparison of topologies}, we compare the quotient topology on the big mapping class group with the order topology induced by a left-ordering. We prove that the quotient topology on $\Mod(S)$ is the same as the order topology induced by an ordering $<_\Gamma$ corresponding to an ideal arc system $\Gamma$ that we construct in Section \ref{Orderability of big MCG} (Theorem \ref{quotient topology is order induced}).
\medskip

\section{Preliminaries}\label{section preliminaries}
We begin with some basic definitions.
 \begin{definition}
A group $G$ is called {\it left-orderable} if its elements can be given a strict total ordering $<$ which is left invariant, that is, $g<h$ implies that $f g<f h$ for all $f, g, h \in G$.  Similarly, $G$ is called {\it right-orderable} if its elements can be given a strict total ordering $<$ which is right invariant, that is, $g<h$ implies that $ gf<h f$ for all $f, g, h \in G$. Further, $G$ is called {\it bi-orderable} if there is a strict total ordering on $G$ that is simultaneously left as well as right invariant.
\end{definition}

It is easy to see that a left-orderable group can be turned into a right-orderable group with respect to a different ordering and vice-versa. Many interesting groups that are central to topology are left-orderable. For instance, free groups, braid groups \cite{MR1214782}, mapping class groups of punctures surfaces with non-empty boundary, and fundamental groups of some 3-manifolds including knot groups are left-orderable \cite{MR2141698}.
\par

 The following observation is well-known.

  \begin{proposition} 
 A group $G$ is left-orderable if and only if there exists a subset $P$ of $G$ with the following properties: 
\begin{enumerate}
     \item $P P\subset P$. 
     \item For every $g\in G$, exactly one of $g=1, g\in P \text{ or } g^{-1}\in P$ holds.
\end{enumerate} 
    \end{proposition}

In fact, given a left-ordering $<$ on a group $G$, we can take $P=\{g \in G \mid g > 1\}$, called the {\it positive cone} of the ordering.
\medskip

A surface is said to be of {\it finite-type} if its fundamental group is finitely generated; otherwise it is said  to be of {\it infinite-type}. Throughout the paper, our primary surface $S$ under consideration will be connected oriented infinite-type with non-empty boundary, unless stated otherwise. At some occasions, we shall also need finite-type surfaces, and we write $S_{\mathtt{g}, n}^b$ to denote a connected oriented finite-type surface of genus $\mathtt{g}$ with $n$ punctures and $b$ boundary components.
\par
 The {\it mapping class group}  $\Mod(S)$ of a surface $S$ (of finite or infinite-type) is the group of isotopy classes of orientation-preserving self-homeomorphisms of $S$, which preserve the boundary of $S$ point-wise. The mapping class group of an infinite-type surface is also referred as the {\it big mapping class group}. Let $\operatorname{Homeo}^{+}(S, \partial S )$ be the group of orientation-preserving self-homeomorphisms of $S$  that fix the boundary $\partial S$ point-wise, equipped with the compact-open topology. The mapping class group $\Mod(S)$ can then be equipped with the {\it quotient topology} inherited from $\operatorname{Homeo}^{+}(S,  \partial S)$, which turns it into a topological group. It is not difficult to see that $\Mod(S)$ has the discrete topology if and only if $S$ is of finite-type. Following \cite{MR2850125}, for a simple closed curve $c$ on $S$, let $T_c$ denote the left-handed Dehn twist along $c$. Further, we shall use the same notation for an orientation-preserving self-homeomorphism $f \in  \operatorname{Homeo}^{+}(S, \partial S )$, and its mapping class in $\Mod(S)$. We refer the reader to \cite{MR4264585} for a survey on both topological and algebraic aspects of big mapping class groups, and refer to \cite{MR2850125} for the general theory of mapping class groups.
\par

Let $S$ be an infinite-type surface with $b \geq 1$ boundary components and $\mathcal{E}(S)$ be its space of ends. Then $\mathcal{E}(S)= \mathcal{E}_p(S) \sqcup \mathcal{E}_{np}(S)$, where  $\mathcal{E}_p(S)$  and $\mathcal{E}_{np}(S)$ are the spaces of planar and non-planar ends, respectively. We view the set of all isolated points of $\mathcal{E}_p(S)$ as marked points on $S$ and denote this set by $P$.

\begin{definition}
Let $S$ be a surface with non-empty boundary.
\begin{enumerate}
\item An {\it ideal arc} on $S$ is the image of a continuous map 
$$
h:(I, \, \partial(I), \, I^\circ) \rightarrow \big(S, \, \partial S \cup P, \, S \setminus (\partial S \cup P) \big),
$$
which is injective on the interior $I^\circ$ of $I=[0,1]$.
\item Two ideal arcs $\gamma, \delta$ on $S$ are {\it isotopic} if there exists an isotopy of $S$ fixing $\partial S \cup P$ set-wise that deforms $\gamma$ onto $\delta$.
\end{enumerate}
\end{definition}

Throughout, we assume that an ideal arc has the canonical orientation given by its parametrisation as a map from $I$ to $S$. Also, by abuse of notation, we shall denote an arc and its image by the same notation depending on the context. 

\begin{definition}
A set $\Gamma=\{\gamma_i\}_{i \in I}$ of essential simple closed curves and arcs on a surface $S$ is said to be an {\it Alexander system} if it satisfies the following conditions:
\begin{enumerate}
\item The elements of $\Gamma$ are in pairwise minimal positions, that is, they attain the geometric intersection number of their corresponding isotopy classes.
\item If $\gamma_i, \gamma_j \in \Gamma$ for $i \neq j$, then $\gamma_i$ is not isotopic to $\gamma_j$.
\item For distinct $i, j, k \in I$, at least one of $\gamma_i \cap \gamma_j$, $\gamma_j \cap \gamma_k$ or $\gamma_k \cap \gamma_i$ is empty. 
\end{enumerate}
\end{definition}

Note that any subset of an Alexander system is again an Alexander system. The following result is proved in \cite[Proposition 2.8]{MR2850125} for finite-type surfaces and in \cite[Lemma 3.2]{MR4029627} for infinite-type surfaces. 

\begin{lemma}\label{lemma:finalex}
Let $S$ be a connected oriented surface (of finite or infinite-type) with possibly non-empty boundary, and $\Gamma$ be a finite Alexander system for $S$. Let $f \in \operatorname{Homeo}^{+}(S, \partial S)$ be such that $f(\gamma)$ is isotopic to $\gamma$ for all $\gamma \in \Gamma$. Then there exists $h \in \operatorname{Homeo}^{+}(S, \partial S)$ such that $h$ is isotopic to the identity on $S$ relative to $\partial S $ and $h|_{\gamma}=f|_{\gamma}$ for all $\gamma \in \Gamma$.
\end{lemma}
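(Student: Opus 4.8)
The plan is to prove the statement by induction on $n=|\Gamma|$, straightening the arcs and curves of $\Gamma=\{\gamma_1,\dots,\gamma_n\}$ one at a time. For $n=0$ one takes $h=\id$. For the inductive step I would first use that a subcollection of an Alexander system is again an Alexander system, and that $f(\gamma_i)$ is isotopic to $\gamma_i$ for each $i$, to apply the inductive hypothesis to $\{\gamma_1,\dots,\gamma_{n-1}\}$: this produces $\varphi\in\Homeo^{+}(S,\partial S)$, isotopic to the identity relative to $\partial S$, with $\varphi|_{\gamma_i}=f|_{\gamma_i}$ for $i<n$. Setting $g:=\varphi^{-1}f$, the map $g$ is isotopic to the identity relative to $\partial S$, it fixes $\gamma_1\cup\dots\cup\gamma_{n-1}$ pointwise, and $g(\gamma_n)$ is isotopic to $\gamma_n$ (because $g$ is isotopic to the identity). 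The crux, which I would isolate as a claim, is that there is an isotopy of $S$, relative to $\partial S$ and fixing $\gamma_1\cup\dots\cup\gamma_{n-1}$ pointwise throughout, whose time-$1$ map $\mu$ satisfies $\mu\circ g|_{\gamma_n}=\id_{\gamma_n}$. Granting this, $h:=\varphi\circ\mu^{-1}$ does the job: it is isotopic to the identity relative to $\partial S$, being a composition of two such homeomorphisms; for $i<n$ the map $\mu^{-1}$ is the identity on $\gamma_i$, so $h|_{\gamma_i}=\varphi|_{\gamma_i}=f|_{\gamma_i}$; and since $\mu^{-1}|_{\gamma_n}=g|_{\gamma_n}$, one gets $h|_{\gamma_n}=\varphi\circ g|_{\gamma_n}=f|_{\gamma_n}$. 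So everything reduces to the claim.

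Before handling the relative claim, it helps to recall its content when there are no lower curves (the essence of the case $n=1$): since $g(\gamma_n)$ is isotopic to $\gamma_n$, the isotopy extension theorem gives an isotopy $(\psi_t)$ of $S$ with $\psi_0=\id$ and each $\psi_t\in\Homeo^{+}(S,\partial S)$, carrying $g(\gamma_n)$ onto $\gamma_n$ setwise; then $\psi_1|_{g(\gamma_n)}\circ g|_{\gamma_n}$ is an orientation-preserving self-homeomorphism of $\gamma_n$ fixing its two endpoints, hence isotopic to $\id_{\gamma_n}$ rel endpoints, and pushing this correction out to an ambient isotopy supported near $\gamma_n$ yields $\mu$. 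The real work is to perform all of this \emph{relative to} $\gamma_1\cup\dots\cup\gamma_{n-1}$, that is, by isotopies that never move the lower curves.

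This relative version is the main obstacle, and it is exactly what condition~(3) of an Alexander system is designed to supply. The point is that $g(\gamma_n)$ and $\gamma_n$ need not be in minimal position, but by the bigon criterion one removes the innermost bigons between them one at a time, and condition~(3)---that among any three elements of $\Gamma$ two are disjoint---together with the fact that $\gamma_1,\dots,\gamma_{n-1}$ are already fixed pointwise, lets one choose each of these bigon-removing isotopies, and then the final push of $g(\gamma_n)$ across the resulting half-disk or annulus onto $\gamma_n$, so as to fix $\gamma_1\cup\dots\cup\gamma_{n-1}$ pointwise throughout. A clean way to organise this is to cut $S$ along those $\gamma_i$ with $i<n$ that meet $\gamma_n$, observe that the pieces into which $\gamma_n$ and $g(\gamma_n)$ are subdivided form an Alexander system in the cut surface, run the no-lower-curves argument there, and reassemble. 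Two routine points remain: $S$ has infinite type, but since $\Gamma$ is finite the whole argument stays inside a compact subsurface $S_0\subseteq S$ containing a regular neighbourhood of $\gamma_1\cup\dots\cup\gamma_n$, collars of the boundary components meeting the arcs, and the finitely many marked points involved, where the classical compact-surface bigon criterion and isotopy extension theorem apply verbatim; and the fact that an arc may have an endpoint on a marked point (rather than on $\partial S$) is harmless, since the relevant isotopies are allowed to move $P$. Finally, the argument never uses that $\Gamma$ fills $S$, which is exactly why the conclusion only produces \emph{some} $h$ isotopic to the identity with $h|_{\Gamma}=f|_{\Gamma}$, rather than asserting that $f$ itself is isotopic to the identity.
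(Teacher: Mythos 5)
The paper does not actually prove this lemma: it is imported from \cite[Proposition 2.8]{MR2850125} for finite-type surfaces and \cite[Lemma 3.2]{MR4029627} for infinite-type surfaces, and your induction on $|\Gamma|$ with the cut-surface/innermost-bigon argument is essentially the proof given in those sources, so the route is the intended one. One assertion in your reduction is wrong, though harmlessly so: $g=\varphi^{-1}f$ is isotopic to $f$ relative to $\partial S$, \emph{not} to the identity --- $f$ is an arbitrary element of $\operatorname{Homeo}^{+}(S,\partial S)$ preserving the isotopy classes of the $\gamma_i$, and nothing in the hypotheses makes it isotopic to the identity (indeed the lemma is applied in the paper to exactly such $f$). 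The conclusion you extract from that assertion, namely that $g(\gamma_n)$ is isotopic to $\gamma_n$, is still true, but for the reason that $g$ is isotopic to $f$ and $f(\gamma_n)$ is isotopic to $\gamma_n$ by hypothesis; and your final map $h=\varphi\circ\mu^{-1}$ is isotopic to the identity because $\varphi$ and $\mu$ are, with no input needed from the isotopy class of $g$. With that repaired, everything rests on your isolated claim, and your sketch of it is the right one: the curves $\gamma_i$ with $i<n$ meeting $\gamma_n$ are pairwise disjoint by condition (3) of an Alexander system, $g(\gamma_n)$ meets each of them in precisely the points $\gamma_n$ does because $g$ fixes them pointwise, so one may cut along them and run the bigon-removal and push-across-the-final-disk argument in the complementary pieces, all supported in a compact subsurface since $\Gamma$ is finite. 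This is exactly how the cited proofs proceed.
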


Next, we define a stable Alexander system.

\begin{definition}
Let $S$ be a connected oriented surface with possibly non-empty boundary. A set  $\Gamma$ of essential simple closed curves and arcs on $S$ is called a 
{\it stable Alexander system} for $S$ if the following conditions hold:
\begin{enumerate}
\item $\Gamma$ is an Alexander system for $S$.
\item If $f \in \Homeo^{+}(S,\partial S)$ preserves the isotopy classes of elements of $\Gamma$, then $f$ is isotopic to the identity map, relative to $\partial S$. 
\end{enumerate}
\end{definition}

\begin{definition}
Let $S$ be a surface with non-empty boundary.
\begin{enumerate}
\item An {\it ideal arc system} for $S$ is a  set $\Gamma$ of non-isotopic disjoint ideal arcs which form a stable Alexander system for $S$. 
\item Two ideal arc systems $\Gamma$ and $\Delta$ for $S$ are said to be {\it equivalent} with respect to a subset $\Sigma$ of $S$ if there is an isotopy of $S$ fixing $\partial S \cup P$ point-wise which leaves $\Sigma$ invariant and carries $\Gamma$ onto $\Delta$.
\item Two ideal arc systems $\Gamma$ and $\Delta$ for $S$ are called {\it transverse} if every arc of $\Gamma$ either coincides with some arc of $\Delta$, or it intersects the arcs of $\Delta$ transversely.
\item A {\it $D$-disk} between transverse ideal arc systems $\Gamma$ and $\Delta$ is a subset of $S$ which is homeomorphic to a closed disk without punctures in its interior, and which is bounded by a segment of an ideal arc of $\Gamma$ and a segment of an ideal arc of  $\Delta$.
\item Two ideal arc systems $\Gamma$ and $\Delta$ for $S$ are said to be {\it reduced} with respect to each other if the following conditions hold:
\begin{itemize}
\item If $\gamma \in \Gamma$ and $\delta \in \Delta$ are such that $\gamma$ and $\delta$ are isotopic, then $\gamma=\delta$.
\item  There is no $D$-disk between $\Gamma$ and $\Delta$.
\end{itemize}
\item Two sets $\mathcal{C}_1$ and $\mathcal{C}_2$ of ideal arcs for $S$ are called {\it totally disjoint} if the {\it geometric intersection number} $i(\gamma ,\delta) =0$ for each $\gamma \in \mathcal{C}_1$  and $\delta \in \mathcal{C}_2$.  
\end{enumerate}
\end{definition}

The following results are proved in \cite[Proposition 1.1 and Proposition 1.2]{MR1756636} for finite-type surfaces. Since the properties under consideration are local,  the results hold for infinite-type surfaces as well.

\begin{proposition} \label{prop:tight}
Any two ideal arc systems for a surface can be reduced with respect to each other upto isotopy. Moreover, if two ideal arc systems $\Gamma$ and $\Delta$ are both transverse to another ideal arc system $\Sigma$ and also reduced with respect to $\Sigma$, then they are equivalent with respect to $\Sigma$. 
\end{proposition}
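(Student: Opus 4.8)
The plan is to redo, paying attention to locality, the classical $D$-disk (bigon) removal argument of \cite{MR1756636}: every isotopy employed can be taken compactly supported, and on any compact subsurface only finitely many arcs and intersection points are relevant since the isotopy classes involved may be assumed locally finite. Throughout I would fix an exhaustion $S=\bigcup_{n}K_{n}$ by compact subsurfaces and carry out the reductions over $K_{1}\subset K_{2}\subset\cdots$ in turn.

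\emph{Reducing $\Gamma$ and $\Delta$ with respect to each other.} First I would place $\Gamma$ and $\Delta$ in transverse position: working arc by arc and using general position, arrange that every arc of $\Gamma$ either coincides with an arc of $\Delta$ or meets the arcs of $\Delta$ transversely in a set that is finite on each $K_{n}$. Next, whenever $\gamma\in\Gamma$ and $\delta\in\Delta$ are isotopic but distinct, an ambient isotopy supported in a neighbourhood of $\gamma\cup\delta$ carries $\delta$ onto $\gamma$; since distinct arcs inside $\Gamma$ (and inside $\Delta$) are disjoint, these neighbourhoods may be chosen disjoint, so the moves do not interfere. Now suppose a $D$-disk remains; choosing one whose interior meets $\Gamma\cup\Delta$ in the fewest components, a standard innermost-disk argument produces a $D$-disk $D$ with $\interior(D)\cap(\Gamma\cup\Delta)=\emptyset$. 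Isotoping the segment of the $\Delta$-arc in $\partial D$ across $D$ past the segment of the $\Gamma$-arc in $\partial D$ removes intersection points of $\Gamma$ with $\Delta$ and creates none, keeps everything transverse, keeps coinciding pairs coinciding, and is supported near $D$. Within any $K_{n}$ the number of intersection points of $\Gamma$ with $\Delta$ is finite and strictly drops under such a move, so after finitely many moves $K_{n}$ contains no $D$-disk; performing this successively over the exhaustion produces, in the limit, a position in which $\Gamma$ and $\Delta$ are reduced with respect to each other.

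\emph{Equivalence with respect to $\Sigma$.} Here I would also assume $\Gamma$ and $\Delta$ are isotopic, which is the situation in which the statement is applied. The point is that a position reduced relative to $\Sigma$ is essentially unique. I would cut $S$ along the arcs of $\Sigma$ to obtain a surface $S'$, part of whose boundary consists of two copies of each arc of $\Sigma$. Since $\Gamma$ and $\Delta$ are transverse and reduced with respect to $\Sigma$, each of them is cut into a collection of essential arcs in $S'$, and having no $D$-disk is precisely the assertion that none of these arcs is isotopic into a $\Sigma$-portion of $\partial S'$. Because $\Gamma$ is isotopic to $\Delta$ and both are reduced, the linear order in which the arcs of $\Gamma$ cross each arc of $\Sigma$ matches that of $\Delta$, so the two families of arcs in $S'$ have the same combinatorial type; applying the Alexander method (Lemma~\ref{lemma:finalex}) piece by piece to $S'$ yields an isotopy of $S'$ fixing the $\Sigma$-boundary and carrying the first family onto the second. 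This isotopy descends to a $\Sigma$-invariant isotopy of $S$ fixing $\partial S\cup P$ pointwise and carrying $\Gamma$ onto $\Delta$, which is the required equivalence.

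\emph{Main obstacle.} The finite-type heart of the matter is twofold: the innermost-disk surgery (routine), and the fact that, for arc systems in reduced position, the combinatorial intersection pattern with $\Sigma$ is an isotopy invariant which in turn forces an ambient isotopy via the Alexander method --- this second point, essentially \cite[Propositions 1.1 and 1.2]{MR1756636}, is the substantive step. The genuinely new issue in the infinite-type case is organisational: one must glue infinitely many compactly supported moves (one per $D$-disk, respectively one per complementary region of $\Sigma$) into a single ambient isotopy of $S$. I would control this through the exhaustion above, arranging the supports so that only finitely many are active over any given $K_{n}$, so that the infinite composition converges to a homeomorphism isotopic to the identity.
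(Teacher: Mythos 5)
First, a point of reference: the paper does not actually prove this proposition --- it cites \cite[Propositions 1.1 and 1.2]{MR1756636} for finite-type surfaces and asserts that the conclusion persists for infinite-type surfaces ``since the properties under consideration are local.'' Your proposal is in the same spirit (localise to an exhaustion and run the finite-type bigon-removal and uniqueness-of-reduction arguments), and you correctly notice something the paper's statement elides: the second assertion requires the hypothesis that $\Gamma$ and $\Delta$ are isotopic, as in Rourke--Wiest's original Proposition~1.2; without it the conclusion is false, and it is indeed only applied in that situation.

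The genuine gap is the local-finiteness premise on which your whole organisation rests. Every ideal arc has both endpoints on $\partial S\cup P$, and $S$ has only finitely many boundary components; if one of them is a circle $C$, then infinitely many arcs of an ideal arc system have endpoints on $C$, so those endpoints accumulate on $C$ and no admissible isotopy can make the family locally finite near $C$. Consequently a single compact piece $K_n$ of your exhaustion may meet infinitely many arcs, infinitely many points of $\Gamma\cap\Delta$, and infinitely many $D$-disks, so the claim that only finitely many surgeries are active over a given $K_n$ fails, and the convergence of your infinite composition of innermost-disk moves to a well-defined limiting system (let alone an ambient isotopy) is not established. A further consequence is that even a single innermost $D$-disk need not admit a neighbourhood disjoint from the remaining arcs, since disjoint arcs can accumulate onto the $\delta$-side of the disk, so one surgery move is not obviously supported away from the rest of the system. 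The standard repair --- and likely what the paper's locality remark is gesturing at --- is to fix a complete hyperbolic metric on $S$ and replace every arc of every system by its geodesic representative: geodesic systems are automatically pairwise reduced, no limiting process is needed, and for the use in Theorem~\ref{main theorem} one only needs reduced representatives rather than a single ambient isotopy. Your second half has a similar, milder issue: the assertion that the linear order in which $\Gamma$ crosses each arc of $\Sigma$ matches that of $\Delta$ is exactly the uniqueness-of-minimal-position statement under proof, and invoking Lemma~\ref{lemma:finalex} ``piece by piece'' on complementary pieces that may carry infinitely many arc-segments is not covered by that lemma, which is stated only for finite Alexander systems.
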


\begin{proposition}\label{prop:reduction}
Suppose that $\Gamma, \Delta$ and $\Sigma$ are three ideal arc systems for $S$ such that $\Gamma$ and $\Delta$ are both reduced with respect to $\Sigma$. Then there exists an ideal arc system $\Gamma'$ which is isotopic to $\Gamma$ with respect to $\Sigma$, such that the three ideal arc systems $\Gamma', \Delta$ and $\Sigma$ are pairwise reduced.
\end{proposition}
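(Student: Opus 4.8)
\emph{Proof proposal.} The plan is an innermost--disk induction, carried out first for finite--type surfaces — this is essentially \cite[Proposition 1.2]{MR1756636} — and then transferred to the infinite--type case by a locality argument. After a small perturbation of $\Gamma$ that keeps $\Gamma$ and $\Sigma$ reduced, I may assume that $\Gamma$, $\Delta$ and $\Sigma$ are pairwise transverse with no triple intersection point. Then I measure the failure of $\Gamma$ and $\Delta$ to be reduced by the complexity $c(\Gamma,\Delta)=\#(\Gamma\cap\Delta)+p(\Gamma,\Delta)$, where $p(\Gamma,\Delta)$ counts the pairs $(\gamma,\delta)\in\Gamma\times\Delta$ with $\gamma$ isotopic to but distinct from $\delta$ (in the infinite--type case this is computed inside a fixed finite--type subsurface of an exhaustion of $S$). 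If $c(\Gamma,\Delta)=0$ there is nothing to do: $\Gamma$ and $\Delta$ are reduced, $\Delta$ and $\Sigma$ are reduced by hypothesis, and $\Gamma$ and $\Sigma$ are still reduced since $\Gamma$ has not been moved.

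For the inductive step, assume $c(\Gamma,\Delta)>0$. First suppose there is a $D$-disk between $\Gamma$ and $\Delta$, and choose, by the usual cut--and--minimise argument, an innermost one $D$, bounded by a segment $a$ of an arc of $\Gamma$ and a segment $b$ of an arc of $\Delta$; its interior then meets no arc of $\Gamma\cup\Delta$ (recall the arcs within $\Gamma$, within $\Delta$, and within $\Sigma$ are pairwise disjoint) and contains no marked point. Hence $D\cap\Sigma$ is a finite disjoint union of arcs with endpoints on $a\cup b$. An innermost arc of $\Sigma$ with both endpoints on $a$ would cut off a $D$-disk between $\Gamma$ and $\Sigma$ inside $D$, contradicting that $\Gamma$ and $\Sigma$ are reduced; likewise no arc of $\Sigma$ has both endpoints on $b$, using that $\Delta$ and $\Sigma$ are reduced. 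So every arc of $D\cap\Sigma$ runs monotonically from $a$ to $b$. Now remove $D$ by the standard isotopy pushing $a$ across $D$ and slightly beyond $b$: since the arcs of $D\cap\Sigma$ join $a$ to $b$, this sweep drags each intersection point of $a$ with $\Sigma$ forward along the relevant arc of $\Sigma$ (from where it meets $a$, through $D$, past where it meets $b$), so the ambient isotopy realising the push can be taken to be supported near $D$ and to carry each arc of $\Sigma$ to itself. The resulting system $\Gamma_1$ is therefore equivalent to $\Gamma$ with respect to $\Sigma$, has $c(\Gamma_1,\Delta)<c(\Gamma,\Delta)$, and is still reduced with respect to $\Sigma$, since any $D$-disk between $\Gamma_1$ and $\Sigma$ would pull back under the isotopy (which fixes $\Sigma$ setwise) to a $D$-disk between $\Gamma$ and $\Sigma$. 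If instead $c(\Gamma,\Delta)>0$ only because some $\gamma\in\Gamma$ is isotopic to but distinct from some $\delta\in\Delta$, then, there being no $D$-disk, $\gamma$ and $\delta$ are disjoint and cobound a disk $R$ whose interior contains no marked point and meets only $\Sigma$; the same analysis of $R\cap\Sigma$ lets me isotope $\gamma$ onto $\delta$ by an isotopy preserving $\Sigma$, again lowering the complexity.

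Iterating, after finitely many steps (in the infinite--type case, finitely many steps within each stage of the fixed exhaustion) one reaches $\Gamma'$ with $c(\Gamma',\Delta)=0$; the composite isotopy carries $\Gamma$ to $\Gamma'$ through systems each equivalent to $\Gamma$ with respect to $\Sigma$, so $\Gamma'$ is isotopic to $\Gamma$ with respect to $\Sigma$ and $\Gamma'$, $\Delta$, $\Sigma$ are pairwise reduced, as required.

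I expect the two delicate points to be the following. First, verifying that the $D$-disk removal can genuinely be realised by an isotopy preserving $\Sigma$ setwise: this is exactly where the hypotheses that $\Gamma$ and $\Sigma$, and $\Delta$ and $\Sigma$, are each reduced get used, through the structure of $D\cap\Sigma$. Second, in the infinite--type case, arranging the possibly infinitely many removals along an exhaustion of $S$ by finite-type subsurfaces so that they compose to a well-defined isotopy — each $D$-disk is compact, only finitely many moves are needed on each piece, and every arc of $\Gamma$ is eventually left untouched. This last point is the precise sense in which the statement, being local, reduces to the finite-type case treated in \cite{MR1756636}.
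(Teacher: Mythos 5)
Your argument is correct and is essentially the approach the paper intends: the paper does not prove this proposition at all, but simply cites \cite[Proposition 1.2]{MR1756636} for the finite-type case and asserts the infinite-type extension in one sentence on the grounds that the property is local. Your innermost-disk induction, with the key observation that reducedness of $\Gamma$ and $\Delta$ against $\Sigma$ forces every component of $D\cap\Sigma$ to run from the $\Gamma$-side to the $\Delta$-side (so the push across $D$ can be taken to preserve $\Sigma$), is exactly the Rourke--Wiest argument, and your exhaustion discussion fills in the locality step the paper leaves implicit.
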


\begin{definition}
Given a set  $\Gamma$ of simple arcs on a surface $S$, the surface obtained by cutting $S$ along $\cup_{\gamma\in \Gamma} \gamma$ is a surface, denoted by $S \setminus \Gamma$, satisfying the following conditions:
\begin{enumerate}
\item For each  $\gamma \in \Gamma$, there exist simple arcs $\alpha_\gamma$ and $\beta_\gamma$ on $\partial(S \setminus \Gamma)$ together with a homeomorphism $h_\gamma$  from $\alpha_\gamma$ to $\beta_\gamma$.
\item The quotient space $(S \setminus \Gamma)/\sim$ is homeomorphic to $S$, where $x \sim h_\gamma(x)$ for each $\gamma \in \Gamma$ and each $x \in \alpha_\gamma$.
\item The image of the arc $\alpha_\gamma$ (equivalently that of $\beta_\gamma$) under the quotient map is the arc $\gamma$ on $S$.
\end{enumerate} 
Similarly, we can define the surface obtained by cutting $S$ along a set consisting of simple closed curves and arcs.
\end{definition}
\bigskip

\section{Orderability of big mapping class groups}\label{Orderability of big MCG}
In this section, we prove the left-orderability of $\Mod(S)$. This is achieved through a stable Alexander system for $S$, which we construct first. For simplicity of notation, if $S'$ is a subsurface of a surface $S$, then we denote $\overline{S \setminus  S'}$ by $S\setminus S'$.

\begin{proposition} \label{prop:cmpexh} (Exhaustion)
Let $S$ be a connected oriented infinite-type surface with $b \ge 0$ boundary components. Then there exists a sequence $\{S_i \}_{i \ge 1}$ of finite-type connected subsurfaces of $S$ satisfying the following conditions:
\begin{enumerate}
\item $S_i \subset \interior(S_j)$ whenever $i < j$.
\item $S = \cup_{i=1}^{\infty} S_i$.
\item Each boundary component of $S$ is a boundary component of $S_i$ for each $i$, that is, $\partial S \subset \partial S_i$ for each $i$.
\item Each component of $\partial S_i \setminus \partial S$ is a separating simple closed curve in $S$ such that each connected component of $S \setminus S_i$ is an infinite-type subsurface.
\item For each $i \ge 1$, no component of $S_{i+1} \setminus S_i$ is an annulus.
\end{enumerate}
\end{proposition}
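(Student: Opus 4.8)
The plan is to build the sequence $\{S_i\}$ one subsurface at a time by a standard compact-exhaustion argument, and then to correct the output in two stages so that conditions (3)--(5) hold. First I would fix a proper Morse function $\varphi : S \to [0,\infty)$ (or equivalently use the fact that $S$ is a countable union of compact subsurfaces) and set $K_i = \varphi^{-1}([0,i])$ after perturbing so that each $K_i$ is a compact subsurface with boundary; passing to a subsequence we may assume $K_i \subset \interior(K_{i+1})$ and $S = \bigcup_i K_i$. The boundary $\partial K_i$ consists of $\partial S$ together with finitely many simple closed curves in $\interior(S)$. This gives (1) and (2) for free, and (3) by construction once we insist each $K_i$ contains a collar of $\partial S$.

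The main work is arranging (4): each curve of $\partial S_i \setminus \partial S$ should be separating in $S$ with every complementary piece of infinite type. For a given $K_i$, consider a component $c$ of $\partial K_i \setminus \partial S$. If $c$ is non-separating in $S$, I would enlarge $K_i$ by attaching a compact subsurface of $S$ that ``fills in'' a loop meeting $c$ once, which replaces $c$ by separating curves; doing this for the finitely many offending curves and then taking a further compact neighborhood produces a compact $S_i'$ all of whose boundary curves (off $\partial S$) are separating. Next, if some complementary component $V$ of $S \setminus S_i'$ is of finite type, then $V$ is compact (its boundary is in $S_i'$), so I would absorb $V$ into $S_i'$; since $S$ is infinite type and we have only removed finitely many finite-type pieces, the result still has $S \setminus S_i'$ consisting of infinite-type pieces. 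After these finitely many modifications we obtain $S_i$ satisfying (1)--(4); interleaving with the original $K_i$ preserves $\bigcup S_i = S$ and the nesting.

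Finally, condition (5) — no component of $S_{i+1} \setminus S_i$ is an annulus — is handled by a pruning/absorption step once the sequence is in hand: if a component $A$ of $S_{i+1}\setminus S_i$ is an annulus, one of its boundary curves lies on $\partial S_i$ and the other on $\partial S_{i+1}$, so $A$ merely ``pushes a curve outward''; I would simply delete such annular collars, i.e.\ redefine $S_{i+1}$ by isotoping its boundary curve across $A$ back onto $\partial S_i$ — but since we need $S_i \subset \interior(S_{i+1})$ strictly, instead I would pass to a subsequence $S_{i_1} \subset S_{i_2} \subset \cdots$ chosen so that between consecutive terms a genuine handle or boundary-complexity increase occurs, which is possible precisely because $S$ has infinite type and hence infinite ``complexity'' to exhaust; absorbing any leftover annuli into the larger surface then kills all annular components.

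The step I expect to be the main obstacle is (4): ensuring simultaneously that the new boundary curves are \emph{separating} and that \emph{no} complementary component becomes finite type, while keeping the subsurfaces finite type and nested. The separating condition forces enlargements, which can in principle create new small complementary components; the key point making it work is that each enlargement is by a \emph{compact} piece and $S$ itself is non-compact with no finite-type complementary obstruction, so finitely many such corrections always terminate. Conditions (1)--(3) and (5) are then essentially bookkeeping on top of the standard exhaustion.
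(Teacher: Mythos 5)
Your overall architecture (standard exhaustion giving (1)--(3), then corrections for (4), then a subsequence argument for (5)) matches the paper, and your treatments of (1), (2), (3) and (5) are essentially the paper's: it too starts from a known exhaustion (via triangulability) satisfying (1)--(3), absorbs finite-type complementary components, and kills annular components of $S_{i+1}\setminus S_i$ by passing to a subsequence, using exactly your observation that the infinite type of each complementary piece guarantees the subsequence exists. (Minor slip: a finite-type complementary component need not be compact --- it may carry punctures of $S$ --- but absorbing it still leaves a finite-type subsurface, so the step survives.)

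The genuine gap is in your mechanism for condition (4). ``Enlarging $K_i$ by attaching a compact subsurface that fills in a loop meeting $c$ once'' is aimed at killing the class of $c$ in $H_1(S;\mathbb{Z}/2)$, but attaching a neighborhood of such a loop replaces $c$ (and whatever other boundary curves the loop crosses) by new boundary curves whose homology classes are sums of $[c]$, $[\alpha]$ and classes of other boundary curves; there is no reason these vanish, so the offending curves can persist and you have given no termination argument for the iteration. The useful reformulation, which you do not use, is: for a connected compact $K\subset S$, an interior boundary curve $c$ of $K$ is separating in $S$ if and only if $c$ is the \emph{only} boundary curve of the complementary component of $S\setminus K$ adjacent to it (one side of such a $c$ is then a connected complementary component, the other side is $K$ together with everything else). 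So the correct move is not to kill homology classes of individual curves but to arrange that each complementary component meets $\partial K$ in a single curve. The paper does this by \emph{shrinking}: for each complementary component $V_\ell$ of $S\setminus S_i'$, it connects the curves of $\partial V_\ell$ by arcs inside $S_i'$, takes a planar subsurface (a sphere with $|\partial V_\ell|+1$ boundary components) they cobound with one new curve $b_{i,\ell}'$, and deletes that planar piece from $S_i'$; the curve $b_{i,\ell}'$ is then automatically separating with infinite-type complement $V_\ell\cup(\text{planar piece})$. (The same effect can be achieved in your ``enlarge'' direction by attaching bands \emph{inside} each $V_\ell$ joining its distinct boundary curves until only one remains, which visibly terminates.) As written, your proof of (4) does not go through, and since you yourself identify (4) as the crux, this needs to be repaired along the above lines.
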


\begin{proof}
It is a well-known result of Rad\'{o}  \cite{Rado} that every surface is triangulable. As a topological consequence of this result (see  \cite{arXiv:2103.16702} or \cite{MR4029627}), it follows that any infinite-type surface $S$ admits a sequence $\{S_i' \}_{i \ge 1}$ of finite-type connected subsurfaces of $S$ satisfying the following conditions:
\begin{enumerate}
\item $S_i' \subset \interior(S_j')$ whenever $i < j$.
\item $S= \cup_{i=1}^{\infty} S_i'$.
\item Each boundary component of $S$ is a boundary component of $S_1'$, and hence of $S_i'$ for each $i$.
\end{enumerate}
It remains to show that the subsurfaces $\{S_i'\}$ can be modified such that the desired conditions (4) and (5) also hold. For each $i$, let $B_i = \{ b_{i,j} \mid 1 \leq j \leq b_i\}$ be the set of simple closed curves on $S_i'$ which are the components of $\partial S_i' \setminus \partial S$. Since $S_i'$ is a finite-type subsurface of $S$, the union $\cup_{j=1}^{b_i} b_{i,j}$ is separating. In general,  $S \setminus S_i'$ may have a connected component which is of finite-type. In that case, we simply remove suitable curves from $B_i$ (and denote the resulting collection also by $B_i$) such that  $S_i'$ is of finite-type and each connected component of  $S \setminus S_i'$ is of infinite-type. Suppose that $S \setminus S_i'$ has $r_i$ connected components. For each $1 \leq \ell \leq r_i$, let $B_{i,\ell}$ be the subset of $B_i$ consisting of curves which are the boundary components of the $\ell$-th connected component of  $S \setminus S_i'$.  Note that each $B_i= \cup_{\ell=1}^ {r_i } B_{i, \ell}$. We now construct our subsurfaces $\{S_i \}_{i \ge 1}$ as follows.  For each $1 \leq \ell \leq r_i$, we take a new separating curve $b_{i,\ell}'$ on  $S_i^'$ such that one of the resulting subsurface of $S_i'$ obtained by cutting it along the curve $b_{i,\ell}'$ is a 2-sphere with $|B_{i,\ell}| +1$ boundary components. Choose $S_i$ to be the subsurface of $S_i'$ such that the set of components of $\partial S_i \setminus \partial S$ is $\{b_{i,\ell}' \mid 1 \leq \ell \leq r_i\}$. See Figure \ref{fig:3} for an illustration. Thus, the sequence of subsurfaces $\{S_i \}_{i \ge 1}$ satisfies the  properties (1)-(4). Let $i$ be the smallest index such that $S_{i+1} \setminus S_i$ has a component that is an annulus. By condition (4), we can choose the smallest index $j>i+1$ such that $S_j \setminus S_i$ has no component that is an annulus. We then replace our sequence with the new sequence (after renumbering) in which the subsurfaces indexed $i+1, \ldots, j-1$ have been removed. Iterating the procedure for this new sequence, if necessary, we arrive at the desired sequence of subsurfaces.
\begin{figure}[H]
\labellist
	\tiny
	\pinlabel $b_1$ at 275, 160
	\pinlabel $b_2$ at 276, 119
	\pinlabel $b_3$ at 255, 55
	\pinlabel $b_{1,1}'$ at 200, 140
	\pinlabel $b_{1,2}'$ at 200, 50
	\endlabellist
	\includegraphics[width=2in]{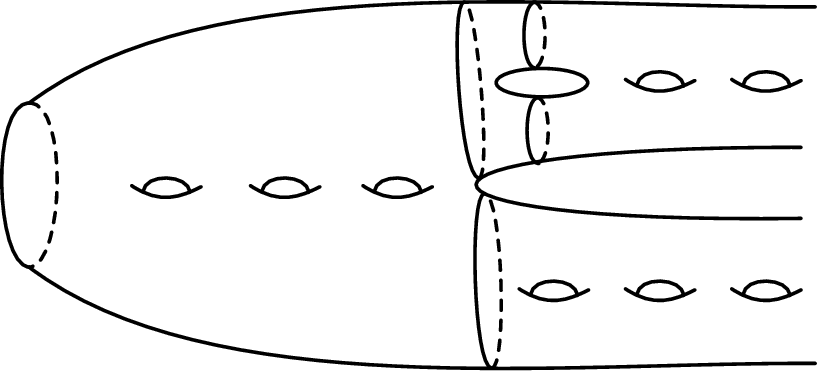}
	\caption{The set $B_1 = \{ \{b_1,b_2\}, \{b_3\} \}$.}
	\label{fig:3}
\end{figure}
\end{proof}

For the rest of this section, we assume that $S$ is a connected oriented infinite-type surface with $b \geq 1$ boundary components.  We fix the sequence of subsurfaces $\{S_i\}_{i \ge 1}$ of $S$ as in Proposition \ref{prop:cmpexh}. Let $B_i$ denote the set of boundary components of $S_i$ that are not the boundary components of $S$, and let $b_i= |B_i|$. For each $i$, since each curve in $B_i$ is separating, it follows that the sequence $\{b_i \}_{i \ge 1}$ is non-decreasing. For each $i$, let $\widehat{S_i}$ be the surface obtained from $S_i$ by capping the boundary components from $B_i$ with disks. Then, each $\widehat{S_i}$ is a finite-type surface with exactly $b$ boundary components and is homeomorphic to $S_{g_i, p_i}^b$ for some $g_i, p_i\ge 0$. Note that, $\{g_i \}_{i \ge 1}$ and $\{p_i \}_{i \ge 1}$ are also non-decreasing sequences. 

\begin{definition}
A {\it curve diagram} for a finite-type surface $S_{g, p}^b$ is an ideal arc system $\Gamma$ for $S_{g, p}^b$ such that $S_{g, p}^b  \setminus \Gamma$ is precisely one disk without any punctures.
\end{definition}

\begin{figure}[H]
	\includegraphics[width=2.5in]{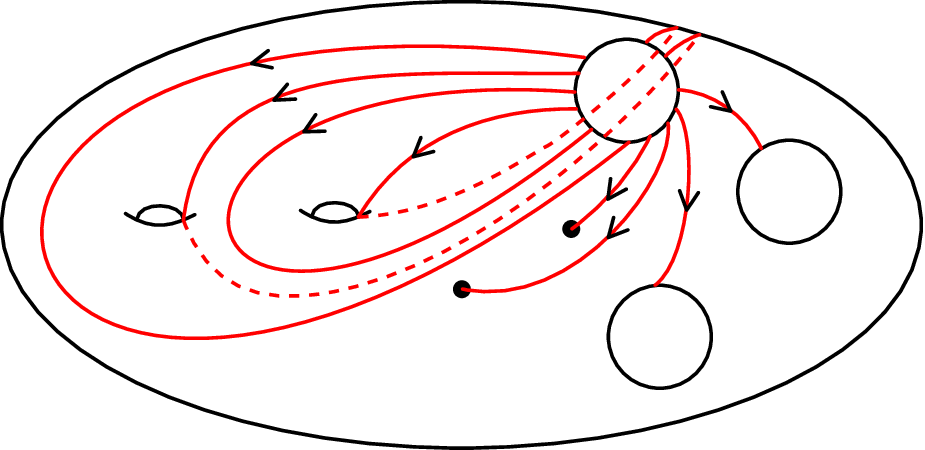}
	\caption{Example of a curve diagram for the surface $S_{2,2}^3$.}
	\label{fig:2}
\end{figure} 

Next, we shall be choosing a curve diagram for each $\widehat{S_i}$ with some desired properties. We shall then define an ideal arc system for $S$ by using these curve diagrams for $\widehat{S_i}$'s.

\begin{itemize}
	\item Note that, the subsurface $S_1$ has $b+b_1$ boundary components.
	\begin{enumerate}
\item  Let $\Gamma_1^{(1)}$ be a curve diagram for $\widehat{S_1}$ disjoint from curves in $B_1$. For example, see Figure  \ref{fig:2} for $\widehat{S_1}=S_{2,2}^3$.

\item We now define the set $\Gamma_1^{(2)}$ of $b_1-1$ separating ideal arcs on $S_1$ such that
\begin{itemize}
\item  $\Gamma_1^{(2)}$ is totally disjoint from $\Gamma_1^{(1)}$,
\item Each connected component of $S \setminus \Gamma_1^{(2)}$ contains precisely one connected component of $S \setminus S_1$. 
\end{itemize}
We construct such a $\Gamma_1^{(2)}$ as follows. Note that $S_1\setminus \Gamma_1^{(1)}$ is the 2-sphere with $1+b_1$ boundary components. If $b_1=1$, then we define $\Gamma_1^{(2)}= \emptyset$. For $b_1 \ge 2$, we proceed as follows. Let $q_1: S_1\setminus \Gamma_1^{(1)} \to S_1$ be the quotient map. Choose a set $\mathcal{C}_1$ of $b_1-1$ ideal arcs on $S_1\setminus \Gamma_1^{(1)}$ such that their end points lie on $q_1^{-1}(\partial(S) \setminus \Gamma_1^{(1)})$ (see Figure \ref{fig4} for $b_1 = 5$). Note that cutting $S_1\setminus \Gamma_1^{(1)}$ along arcs of $\mathcal{C}_1$ gives a disjoint union of $b_1$ cylinders. Let us now define $\Gamma_1^{(2)}= \{q_1(\gamma) \mid \gamma \in \mathcal{C}_1 \}$ and $\Gamma_1=\Gamma_1^{(1)} \cup \Gamma_1^{(2)}$.
		\begin{figure}[H]
			\labellist
			\tiny
			\pinlabel $b_1=5$ at 380, 180
			\endlabellist
		\includegraphics[width=2in]{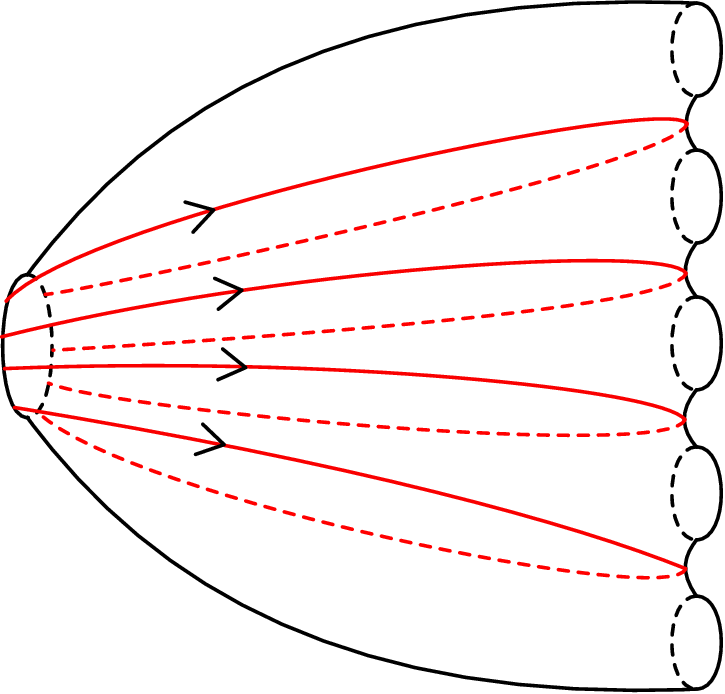}
		\caption{The set $\mathcal{C}_1$ which has 4 ideal arcs on $S_0^6$.}		
		\label{fig4}
		\end{figure}	
	\end{enumerate}		
	\item The subsurface $S_2$ has $b+b_2$ boundary components.
	\begin{enumerate}
	\item Since $S_1 \subset S_2$, we choose a curve diagram $\Gamma_2^{(1)}$ for $\widehat{S}_2$ disjoint from curves in $B_2$ such that $\Gamma_1^{(1)} \subset \Gamma_2^{(1)}$, and $\Gamma_2^{(1)}$ and $\Gamma_1^{(2)}$ are totally disjoint. See Figure \ref{fig:arc} for an illustration.
	\begin{figure}[H]
\includegraphics[width=2.5in]{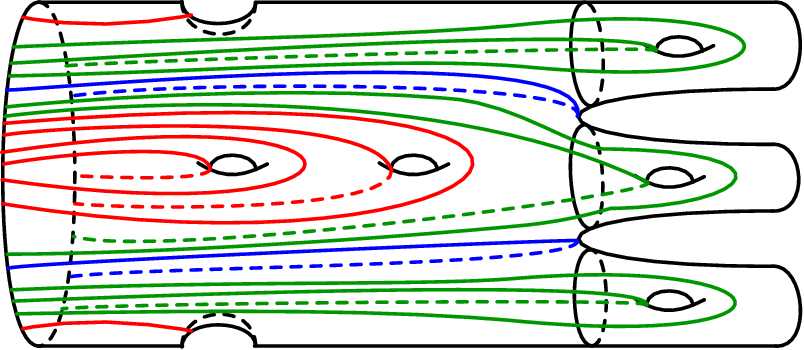}	\caption{The arcs of $\Gamma_1^{(1)}$ are in red color, the arcs of $\Gamma_1^{(2)}$ are in blue color, and the arcs of $\Gamma_2^{(1)}$ are in green color.}
\label{fig:arc}
	\end{figure}
	\item We now define the set $\Gamma_2^{(2)}$ of $b_2-1$ separating ideal arcs on $S_2$ such that
\begin{itemize}
\item $\Gamma_2^{(2)}$  is totally disjoint from $\Gamma_2^{(1)}$,
\item $\Gamma_1^{(2)} \subset \Gamma_2^{(2)}$,
\item Each connected component of $S \setminus \Gamma_2^{(2)}$ contains precisely one connected component of $S \setminus S_2$. 
\end{itemize}
We construct such a $\Gamma_2^{(2)}$ as follows.  Note that, $S_2\setminus \Gamma_2^{(1)}$ is a 2-sphere with $1+b_2$ boundary components. Let $q_2: S_2\setminus \Gamma_2^{(1)} \to S_2$ be the quotient map. Observe that $S_2\setminus (\Gamma_2^{(1)}\cup \Gamma_1^{(2)})$ is a disjoint union of $b_1$ many 2-spheres with non-empty boundary. Repeating the procedure of step (2) for the case of $S_1$ for each such 2-sphere with non-empty boundary, we obtain a set $\mathcal{C}_2$ of ideal arcs  on $S_2\setminus (\Gamma_2^{(1)}\cup \Gamma_1^{(2)})$ such that their end points lie on $q_2^{-1}(\partial(S) \setminus (\Gamma_2^{(1)} \cup \Gamma_1))$ (see Figure~\ref{fig:C2}). We define  $\Gamma_2^{(2)}= q_2(\mathcal{C}_2) \cup \Gamma_1^{(2)}$ and $\Gamma_2= \Gamma_2^{(1)} \cup \Gamma_2^{(2)}$.
\begin{figure}[H]
\includegraphics[width=2in]{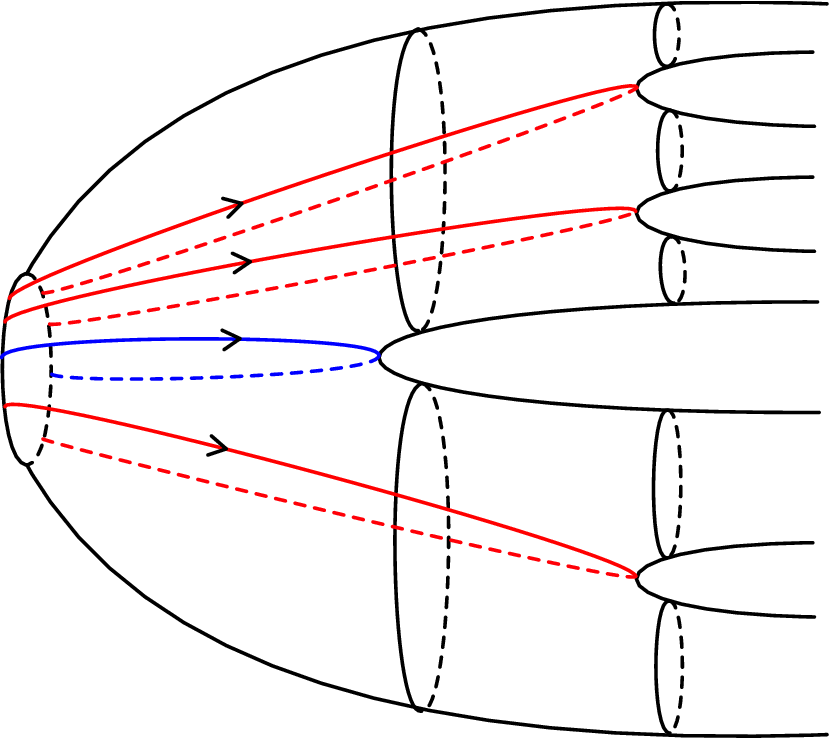}
\caption{The arcs in $\mathcal{C}_2$ are in red color, and an arc in $\Gamma_2^{(1)}$ is in blue color.}
\label{fig:C2}
\end{figure}
\end{enumerate}		
	
\item For $k \ge 3$, let  $\Gamma_{k-1} = \Gamma_{k-1}^{(1)} \cup \Gamma_{k-1}^{(2)}$ be the set of curves (as defined above) for the subsurface $S_{k-1}$. We know that the subsurface $S_k$ has $b+b_k$ boundary components.
\begin{enumerate}
\item  Since $S_{k-1} \subset S_k$, we choose a curve diagram $\Gamma_k^{(1)}$ for $\widehat{S}_k$ such that $\Gamma_{k-1}^{(1)} \subset \Gamma_k^{(1)}$ and  $\Gamma_k^{(1)}$ is totally disjoint from  $\Gamma_{k-1}^{(2)}$.

	\item We now define the set $\Gamma_k^{(2)}$ of $b_k-1$ separating ideal arcs on $S_k$ such that
\begin{itemize}
\item $\Gamma_k^{(2)}$  is totally disjoint from $\Gamma_k^{(1)}$,
\item $\Gamma_{k-1}^{(2)} \subset \Gamma_k^{(2)}$,
\item Each connected component of $S \setminus \Gamma_k^{(2)}$ contains precisely one connected component of $S \setminus S_k$. 
\end{itemize}
Note that, $S_k\setminus \Gamma_k^{(1)}$ is a 2-sphere with $1+b_k$ boundary components. Let $q_k: S_k\setminus \Gamma_k^{(1)} \to S_k$ be the quotient map. Observe that $S_k\setminus (\Gamma_k^{(1)}\cup \Gamma_{k-1}^{(2)})$ is a disjoint union of $b_{k-1}$ many 2-spheres with non-empty boundary. Again, repeating the procedure of step (2) for the case of $S_{k-1}$ for each such 2-sphere with non-empty boundary, we obtain a set $\mathcal{C}_k$ of ideal arcs  on $S_k\setminus (\Gamma_k^{(1)}\cup \Gamma_{k-1}^{(2)})$ such that their end points lie on $q_k^{-1}(\partial(S) \setminus (\Gamma_k^{(1)} \cup \Gamma_{k-1}))$. We define  $\Gamma_k^{(2)}= q_k(\mathcal{C}_k) \cup \Gamma_{k-1}^{(2)}$ and $\Gamma_k= \Gamma_k^{(1)}\cup \Gamma_k^{(2)}$.
\end{enumerate}		
\end{itemize}

Finally, we define 
\begin{equation}\label{explicit stable Alexander system gamma}
\Gamma = \cup_{k = 1}^\infty \Gamma_k.
\end{equation}
 We claim that $\Gamma$ is an ideal arc system for the surface $S$. Note that any two ideal arcs in $\Gamma$ are non-isotopic and disjoint. Thus, it remains to show that $\Gamma$ is a stable Alexander system for $S$.  Recall that, for each $k \ge 1$, $B_k$ is the set of boundary components of $S_k$ that are not the boundary components of $S$. Let us set $B= \cup_{k=1}^\infty B_k$. Then we have the following result.

\begin{lemma}\label{lemma:isobdy}
Let $S$ be an infinite-type surface with non-empty boundary and $f \in \Homeo^{+}(S,\partial S)$. If $f(\gamma)$ is isotopic to $\gamma$ for every $\gamma \in \Gamma$, then $f(b)$ is isotopic to $b$ for every $b \in B$.
\end{lemma}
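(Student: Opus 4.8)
The plan is to localise to the finite-type subsurface $S_k$ containing a given curve of $B$, straighten $f$ there by Lemma~\ref{lemma:finalex}, and then recognise that curve as the free boundary circle of a regular neighbourhood of a $1$-complex built from $\Gamma_k$ and $\partial S$. So fix $b\in B$; say $b\in B_k$, $b=b_{k,\ell}'$, and let $N$ be the (infinite-type, single-boundary) component of $\overline{S\setminus S_k}$ that $b$ cuts off, so $b=\partial N$. First I would note that $\Gamma_k$ is a finite Alexander system for $S$ --- its elements are pairwise disjoint, pairwise non-isotopic essential arcs --- and that $f$ preserves the isotopy class of every $\gamma\in\Gamma_k$. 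Lemma~\ref{lemma:finalex} then yields $h\in\Homeo^{+}(S,\partial S)$, isotopic to $\id$ rel $\partial S$, with $h|_{\gamma}=f|_{\gamma}$ for all $\gamma\in\Gamma_k$. Since $h\simeq\id$, the curve $h^{-1}(f(b))$ is isotopic to $f(b)$, so after replacing $f$ by $h^{-1}f$ we may assume that $f$ fixes $\big(\bigcup_{\gamma\in\Gamma_k}\gamma\big)\cup\partial S$ pointwise; only this property of $f$ is used below.

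Next I would set up the relevant subsurface. Let $A$ be the component of the cut surface $S_k\setminus\Gamma_k^{(2)}$ having $b$ in its boundary; one checks that $A$ is connected with $\partial A=b\sqcup c$, where $c$ is the union of the other boundary circles of $A$, each a concatenation of arcs of $\Gamma_k^{(2)}$ and segments of $\partial S$ (when $b_k=1$ one has $A=S_k$ and $c=\partial S$). Put
$$
K=(\Gamma_k^{(1)}\cap A)\cup c\ \subseteq\ \Big(\bigcup_{\gamma\in\Gamma_k}\gamma\Big)\cup\partial S ,
$$
so that $f$ fixes $K$ pointwise. The key structural claim, which I would extract from the explicit construction of $\Gamma_k$, is: cutting $A$ along the arcs $\Gamma_k^{(1)}\cap A$ yields a single cylinder, one of whose two boundary circles is $b$; equivalently, $A$ is the mapping cylinder of a circle map onto the $1$-complex $K$, so that $A$ is a regular neighbourhood of $K$ in $S$ with free boundary circle $b$ (the unique boundary circle of $A$ disjoint from $K$).

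Granting the claim, the argument concludes quickly. Since $f$ fixes $K$ pointwise, $f(A)$ is again a regular neighbourhood of $K$; by uniqueness of regular neighbourhoods there is an ambient isotopy $\varphi$ of $S$, fixing $K$ pointwise, with $\varphi_{1}(A)=f(A)$. Then $g:=\varphi_{1}^{-1}\circ f$ is a self-homeomorphism of $A$ fixing $K\supseteq c$ pointwise; as it preserves $\partial A$ and fixes every boundary circle of $A$ other than $b$, it maps $b$ to itself. Hence $f(b)=\varphi_{1}(b)$, which is isotopic to $b$; since $b\in B$ was arbitrary, the lemma follows.

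The step I expect to be the real obstacle is the structural claim in the second paragraph. Establishing it means unwinding the recursion $\Gamma_k^{(2)}=q_k(\mathcal{C}_k)\cup\Gamma_{k-1}^{(2)}$ and using that $\mathcal{C}_k$ cuts each planar piece of $S_k\setminus(\Gamma_k^{(1)}\cup\Gamma_{k-1}^{(2)})$ into cylinders, each carrying exactly one curve of $B_k$, together with the fact that the edge-identifications performed by $q_k$ occur within a single such cylinder --- otherwise two components of $S\setminus\Gamma_k^{(2)}$ would be joined, contradicting the defining property of $\Gamma_k^{(2)}$. Two subsidiary points also need attention: since $\widehat{S_k}\setminus\Gamma_k^{(1)}$ is a disk without punctures, every marked point of $\widehat{S_k}$ lies on an arc of $\Gamma_k^{(1)}$ and is hence fixed by $f$, so marked points cause no trouble; and the degenerate cases (no arc of $\Gamma_k^{(1)}$ meets $A$, or $b_k=1$) are handled verbatim, with $A$ an annulus or $A=S_k$.
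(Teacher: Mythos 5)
Your proposal is correct and follows essentially the same route as the paper: localise $b$ to the finite-type subsurface $S_k$, straighten $f$ along $\Gamma_k$ using Lemma~\ref{lemma:finalex}, and then read off that $b$ is determined up to isotopy by the fixed $1$-complex $\big(\cup_{\gamma\in\Gamma_k}\gamma\big)\cup\partial S$. The paper's own final step is just a terser packaging of yours --- it cuts along all of $\Gamma_k$ and observes that $b$ is isotopic to a boundary component of a component of $S\setminus\Gamma_k$, whereas you unwind the same structural fact into a regular-neighbourhood uniqueness argument.
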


\begin{proof}
Let $b \in B= \cup_{k=1}^\infty B_k$. Then $b$ is a boundary curve for some finite-type subsurface $S_i$ of $S$ as constructed in Proposition~\ref{prop:cmpexh}. We are given that 
$f(\gamma)$ is isotopic to $\gamma$ for every $\gamma \in \Gamma_i$. Since $\Gamma_i$ is finite, by Lemma~\ref{lemma:finalex}, there exists $h \in \operatorname{Homeo}^{+}(S,\partial S)$ isotopic to the identity on $S$ relative to $\partial(S)$ such that $h|_{\Gamma_{i}} = f|_{\Gamma_{i}}$. Taking $g = fh^{-1}$, we see that $g$ is isotopic to $f$ on $S$ relative to $\partial(S)$ and $g|_{\Gamma_{i}} = \id$. This implies that $g$ induces a map on $S \setminus \Gamma_{i}$. It follows from the construction of $\Gamma_i$ that the curve $b$ is isotopic  to a boundary component of one of the components of $S\setminus \Gamma_{i}$. This shows that $g(b)$ is isotopic to $b$ on $S \setminus \Gamma_{i}$. But, $g$ being isotopic to $f$ implies that $f(b)$ is isotopic to $b$ on $S$.
\end{proof} 

We need the following result \cite[Lemma 3.5]{MR4029627} for proving the next proposition.

\begin{lemma}\label{isotopy lemma}
Let $S$ be an oriented infinite-type surface, $\{S_k\}_{k \ge 1}$ be an exhaustion for $S$ and $B= \cup_{k=1}^\infty B_k$ as defined above. Let $f \in \Homeo^+(S, \partial S)$ be such that $f(b)$ is isotopic to $b$ for every $b \in B$. Then $f$ is isotopic to a homeomorphism $g \in  \Homeo^+(S, \partial S)$ such that $g|_B = \id$.
\end{lemma}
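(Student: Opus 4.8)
The plan is to realise $g$ as a limit $g=\lim_{k\to\infty}f_k$ of a sequence $f=f_0,f_1,f_2,\dots$ in $\Homeo^+(S,\partial S)$, where each $f_k$ is isotopic to $f$ rel $\partial S$, fixes $B^{(k)}:=B_1\cup\dots\cup B_k$ pointwise, and is obtained from $f_{k-1}$ by an isotopy $H_k$ supported in $N_{k-1}:=\overline{S\setminus S_{k-1}}$ (with $N_0:=S$). Concatenating the $H_k$ by the standard rescaling trick --- running $H_k$ over the time interval $[1-2^{-(k-1)},\,1-2^{-k}]$ --- will then produce the desired isotopy from $f$ to $g$ rel $\partial S$. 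Two elementary facts will be used throughout: the curves of $B=\cup_k B_k$ are pairwise disjoint (they are boundary curves of a nested family of subsurfaces), so every finite subcollection is an Alexander system for $S$, and by Proposition~\ref{prop:cmpexh}(4)--(5) the curves of $B_k$ are essential and non-peripheral in the (possibly disconnected) surface $N_{k-1}$, hence form an Alexander system for $N_{k-1}$ as well; also, an orientation-preserving self-homeomorphism fixing a separating simple closed curve pointwise cannot swap its two sides, so any $\phi\in\Homeo^+(S,\partial S)$ fixing $B^{(k-1)}$ pointwise fixes $\partial S_{k-1}=\partial S\cup B_{k-1}$ pointwise and thus restricts to a homeomorphism $\phi|_{N_{k-1}}\in\Homeo^+(N_{k-1},\partial N_{k-1})$ preserving each component.

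The inductive step is the heart of the matter. Assume $f_{k-1}$ has been built; by the above it restricts to $f_{k-1}|_{N_{k-1}}$. Since $f_{k-1}\simeq f$, we have $f_{k-1}(b)\simeq b$ in $S$ for every $b\in B_k$, and because $N_{k-1}$ is an essential --- hence $\pi_1$-injective --- subsurface, also $f_{k-1}(b)\simeq b$ in $N_{k-1}$. Applying Lemma~\ref{lemma:finalex} to each component of $N_{k-1}$, with the finite Alexander system $B_k$ and the homeomorphism $f_{k-1}|_{N_{k-1}}$, yields $h_k\in\Homeo^+(N_{k-1},\partial N_{k-1})$ isotopic to the identity rel $\partial N_{k-1}$ with $h_k|_{B_k}=f_{k-1}|_{B_k}$. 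Extending $h_k$ by the identity over $S_{k-1}$ gives $\widetilde{h}_k\in\Homeo^+(S,\partial S)$, isotopic to $\id$ rel $\partial S$ through an isotopy supported in $N_{k-1}$; set $f_k:=\widetilde{h}_k^{-1}f_{k-1}$. Then $f_k$ is isotopic to $f_{k-1}$ rel $\partial S$ via an isotopy $H_k$ supported in $N_{k-1}$ (so disjoint from $S_{k-1}$), it agrees with $f_{k-1}$ on $S_{k-1}\supseteq B^{(k-1)}$ and hence still fixes $B^{(k-1)}$ pointwise, and on $B_k\subseteq N_{k-1}$ it equals $h_k^{-1}\circ f_{k-1}|_{N_{k-1}}=\id$. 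This closes the induction.

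For convergence, fix a compact $K\subseteq S$ and choose $m$ with $K\subseteq\interior(S_m)$. For every $k\ge m+1$ the isotopy $H_k$ is supported in $N_{k-1}\subseteq N_m$, which is disjoint from $K$, so on $K$ the concatenated isotopy is stationary from time $1-2^{-m}$ on; therefore it extends continuously to $S\times[0,1]$. Its endpoint $g:=H(\cdot,1)$ satisfies $g|_{\interior(S_m)}=f_m|_{\interior(S_m)}$ for every $m$; since each $f_m$ fixes $\partial S_m$ pointwise it restricts to a self-homeomorphism of $S_m$, and as the $\interior(S_m)$ cover $S$ it follows that $g$ is an orientation-preserving self-homeomorphism of $S$ fixing $\partial S$ pointwise and isotopic to $f$. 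Finally, for $b\in B_k$ we have $b\subseteq S_k$ and $g|_{S_k}=f_k|_{S_k}$, so $g|_{B_k}=\id$; hence $g|_B=\id$, as required.

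The step I expect to be the main obstacle is the convergence argument --- more precisely, the requirement that the correction isotopy $H_k$ turning $f_{k-1}$ into $f_k$ be supported away from $S_{k-1}$. This is what forces the relative, inductive construction inside the shrinking subsurfaces $N_{k-1}$: applying Lemma~\ref{lemma:finalex} once to all of $S$ would yield $f_k$ fixing $B^{(k)}$ but with no control on how it differs from $f_{k-1}$, so the resulting sequence need not converge. A subsidiary technical point is the passage from isotopy in $S$ to isotopy in $N_{k-1}$ for the curves of $B_k$ (using incompressibility of $N_{k-1}$), together with the verification --- via properties (4)--(5) of the exhaustion in Proposition~\ref{prop:cmpexh} --- that these curves are genuinely essential and non-peripheral in $N_{k-1}$.
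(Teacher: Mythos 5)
The paper does not prove this lemma itself; it quotes it from Hern\'andez Hern\'andez--Morales--Valdez \cite[Lemma 3.5]{MR4029627}, whose argument is essentially the one you give: an inductive correction of $f$ over the exhaustion, with each correcting isotopy produced by Lemma~\ref{lemma:finalex} and supported in $\overline{S\setminus S_{k-1}}$, so that the concatenated isotopy stabilises on compacta and converges. Your write-up is correct, and you rightly isolate the two points that need care --- the support condition that makes the limit exist, and the passage from isotopy in $S$ to isotopy in the incompressible subsurface $\overline{S\setminus S_{k-1}}$, which is exactly where conditions (4)--(5) of Proposition~\ref{prop:cmpexh} enter.
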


\begin{proposition}
The set $\Gamma$ is a stable Alexander system for the infinite-type surface $S$.
\end{proposition}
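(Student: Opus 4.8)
The plan is to verify the two clauses in the definition of a stable Alexander system. That $\Gamma$ is an Alexander system is immediate: its elements are pairwise disjoint and, as already observed, pairwise non-isotopic ideal arcs, so conditions (1)--(3) of an Alexander system hold vacuously. The content is the second clause: I must show that if $f\in\Homeo^{+}(S,\partial S)$ preserves the isotopy class of every element of $\Gamma$, then $f$ is isotopic to $\id_{S}$ relative to $\partial S$. I would begin by feeding $f$ through the two lemmas just proved: by Lemma~\ref{lemma:isobdy}, $f(b)$ is isotopic to $b$ for every $b\in B$, and then by Lemma~\ref{isotopy lemma}, $f$ is isotopic rel $\partial S$ to some $g\in\Homeo^{+}(S,\partial S)$ with $g|_{B}=\id$. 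Since isotopic homeomorphisms act identically on isotopy classes, $g$ still preserves the isotopy class of every $\gamma\in\Gamma$, and it suffices to prove $g\simeq\id_{S}$ rel $\partial S$.

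Next I would pin down $g$ on each finite-type piece $S_{k}$. Because every curve of $B_{k}$ is separating in $S$ and bounds a non-compact (infinite-type) complementary component, and $g$ fixes $\partial S$ pointwise, $g$ preserves each $S_{k}$ and fixes $\partial S_{k}=\partial S\cup B_{k}$ pointwise; write $g_{k}:=g|_{S_{k}}$. Since the curves of $B_{k}$ are essential in $S$, the inclusion $S_{k}\hookrightarrow S$ is $\pi_{1}$-injective, so $g(\gamma)\simeq\gamma$ in $S$ descends to $g_{k}(\gamma)\simeq\gamma$ in $S_{k}$ for every $\gamma\in\Gamma_{k}$ (all arcs of $\Gamma_{k}$ lie in $S_{k}$). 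Now $\Gamma_{k}=\Gamma_{k}^{(1)}\cup\Gamma_{k}^{(2)}$ is a \emph{finite} Alexander system for $S_{k}$ and, by the construction, $S_{k}\setminus\Gamma_{k}$ is a disjoint union of $b_{k}$ annuli, the $j$-th of which has one boundary circle the curve $b_{k,j}'\in B_{k}$ and the remaining boundary built from copies of arcs of $\Gamma_{k}$ and of $\partial S$. Applying Lemma~\ref{lemma:finalex} on $S_{k}$ to $g_{k}$ yields $h_{k}\in\Homeo^{+}(S_{k},\partial S_{k})$ isotopic to $\id$ rel $\partial S_{k}$ with $h_{k}|_{\Gamma_{k}}=g_{k}|_{\Gamma_{k}}$; then $h_{k}^{-1}g_{k}$ fixes $\bigcup_{\gamma\in\Gamma_{k}}\gamma$ pointwise, hence induces a self-homeomorphism of $S_{k}\setminus\Gamma_{k}$ that fixes the \emph{entire} boundary of each annular component pointwise. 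By Alexander's lemma for the annulus such a map is isotopic rel boundary to a product of powers of core twists, and regluing gives
$$g_{k}\ \simeq\ \prod_{b\in B_{k}}T_{b}^{\,n_{k,b}}\qquad\text{rel }\partial S_{k}$$
for some integers $n_{k,b}$.

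The crucial step is then to show every $n_{k,b}=0$, and this is exactly where the auxiliary separating arcs $\Gamma_{k}^{(2)}$ of the construction earn their keep. Fix $b\in B_{k}$ and let $X$ be the infinite-type complementary component of $S_{k}$ with $\partial X=b$. Being of infinite type, $X$ contains two distinct ends of $S$; for $m$ large these lie in distinct components of $S\setminus S_{m}$, hence --- by the defining property of $\Gamma_{m}^{(2)}$, which passes to $\Gamma^{(2)}:=\bigcup_{m}\Gamma_{m}^{(2)}\subseteq\Gamma$ --- in distinct components of $S\setminus\Gamma^{(2)}$. Since every arc of $\Gamma^{(2)}$ has its endpoints on $\partial S\subset S\setminus\interior X$, some arc $\delta\in\Gamma^{(2)}$ must enter $\interior X$, so $i(\delta,b)>0$. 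Isotope $g$ rel $\partial S$, supported in $S_{k}$, so that $g_{k}=\prod_{b\in B_{k}}T_{b}^{\,n_{k,b}}$ exactly; the resulting homeomorphism still preserves the isotopy class of every element of $\Gamma$, in particular $g(\delta)\simeq\delta$. But near $b$ the arc $g(\delta)$ differs from $\delta$ by $n_{k,b}$ twists about the essential curve $b$ (the twists about the other curves of $B_{k}$ being supported away from $b$), and a standard argument --- cutting $S$ along $b$ and comparing the resulting arc diagrams, using $i(\delta,b)>0$ --- shows this is impossible unless $n_{k,b}=0$. Hence $g_{k}\simeq\id$ rel $\partial S_{k}$ for every $k$.

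Finally I would assemble the $g_{k}$'s into a single isotopy by the standard exhaustion argument. Using $g|_{S_{1}}\simeq\id$ rel $\partial S_{1}$ and the isotopy extension theorem, isotope $g$ rel $\partial S$ to $g^{(1)}$ with $g^{(1)}|_{S_{1}}=\id$. Inductively, if $g^{(k-1)}$ is the identity on $S_{k-1}$, then $g^{(k-1)}|_{S_{k}}$ is the identity on $S_{k-1}$ and isotopic to $\id$ rel $\partial S_{k}$; since no component of $\overline{S_{k}\setminus S_{k-1}}$ is a disk or an annulus (exhaustion condition (5)), the inclusion-induced map $\Mod(\overline{S_{k}\setminus S_{k-1}},\partial)\to\Mod(S_{k},\partial S_{k})$ is injective, so $g^{(k-1)}|_{S_{k}}$ is isotopic to $\id$ rel $S_{k-1}\cup\partial S_{k}$, and extending rel $\overline{S\setminus S_{k}}$ produces $g^{(k)}$ with $g^{(k)}|_{S_{k}}=\id$ and $g^{(k)}\simeq g^{(k-1)}$ rel $S_{k-1}\cup\partial S$. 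The concatenation of these isotopies is stationary on every compact subset of $S$ from some stage on, hence converges in the compact-open topology to an isotopy rel $\partial S$ from $g$ to $\id_{S}$; combined with $f\simeq g$ this gives $f\simeq\id_{S}$ rel $\partial S$, so $\Gamma$ is a stable Alexander system. I expect the two genuinely delicate points to be the vanishing of the residual twisting coefficients $n_{k,b}$ --- the sole reason the separating arcs $\Gamma_{k}^{(2)}$ were engineered into $\Gamma$ --- and the coherence of the successive finite-type isotopies in the last step.
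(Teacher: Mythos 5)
Your overall skeleton matches the paper's: reduce via Lemmas~\ref{lemma:isobdy} and~\ref{isotopy lemma} to a representative fixing $B$ pointwise, prove triviality on each $S_k$ relative to $\partial S_k$, and then globalize. The genuine problem is your mechanism for killing the residual twist coefficients $n_{k,b}$. You assert that the infinite-type component $X$ of $S\setminus S_k$ with $\partial X=b$ ``contains two distinct ends of $S$'' and from this extract an arc of $\Gamma^{(2)}$ crossing $b$. That topological claim is false: an infinite-type surface can have a single end (the Loch Ness monster), and for such an $S$ the exhaustion can have $b_m=1$ for every $m$, in which case $\Gamma_m^{(2)}=\emptyset$ for all $m$ and hence $\Gamma^{(2)}=\emptyset$. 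Your twist-killing step then has nothing to work with, while $T_b^{n}$ for $n\neq 0$ is a nontrivial mapping class preserving every arc of $\Gamma_k$. The arcs that actually detect these twists come from the \emph{next} stage: because no component of $S_{k+1}\setminus S_k$ is an annulus (condition (5) of Proposition~\ref{prop:cmpexh}), every $b\in B_k$ is crossed essentially by some arc of $\Gamma_{k+1}$ --- from $\Gamma_{k+1}^{(1)}$ if the adjacent component of $S_{k+1}\setminus S_k$ carries genus or punctures, and from $\Gamma_{k+1}^{(2)}$ if it is a sphere with at least three boundary components. The paper packages exactly this as the statement that $S_k\cap(\cup_{\gamma\in\Gamma_{k+1}}\gamma)$ is a finite \emph{stable} Alexander system for $S_k$, which gives $f|_{S_k}\simeq\id$ rel $\partial S_k$ in one stroke, with no residual twist to dispose of. Your argument is repairable by replacing ``some $\delta\in\Gamma^{(2)}$'' with ``some $\delta\in\Gamma_{k+1}$'', but as written the step fails.

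A secondary remark concerns the globalization. Your infinite concatenation of isotopies leans on injectivity of $\Mod(\overline{S_k\setminus S_{k-1}},\partial)\to\Mod(S_k,\partial S_k)$, which is not quite the standard subsurface-inclusion theorem and still requires ruling out boundary curves of $\overline{S_k\setminus S_{k-1}}$ that become isotopic to one another or inessential inside $S_k$. The paper sidesteps this entirely: if $f$ were not isotopic to the identity, the Alexander method for infinite-type surfaces would produce an essential simple closed curve or arc $c$ with $f(c)$ not isotopic to $c$; but $c$ is compact, hence contained in some $S_k$, contradicting $f|_{S_k}\simeq\id$. Adopting that shortcut would remove the second delicate point you flagged.
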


\begin{proof}
Let $f \in \Homeo^{+}(S,\partial S)$ be such that $f(\gamma)$ is isotopic to $\gamma$ for every $\gamma \in \Gamma$. By Lemma \ref{lemma:isobdy}, $f(b)$ is isotopic to $b$ for all $b \in B$. Further, by Lemma \ref{isotopy lemma}, upto isotopy, we can assume that $f|_b=\id|_b$ for all $b \in B$. Thus, the restriction $f|_{S_k}$ is a homeomorphism of $S_k$ for each $k \ge 1$.  Note that $S_k \cap (\cup_{\gamma\in \Gamma_{k+1}} \gamma)$ is a finite stable Alexander system for $S_k$ for each $k \ge 1$ (by condition (5) of Proposition \ref{prop:cmpexh}). By definition of a stable Alexander system, $f|_{S_k}$ is isotopic to the identity map on $S_k$ relative to $\partial S_k$. Suppose that $f$ is not isotopic to the identity map on $S$. Then there exists an essential simple closed curve (or an arc) $c$ such that $f(c)$ is not isotopic to $c$. Choose $k$ to be sufficiently large such that $c$ lies in $S_k$. This means that $f|_{S_k}(c)=f(c)$ is not isotopic to $c$, which is a contradiction. Hence, $f$ must be isotopic to the identity map on $S$.
\end{proof}

We are now in a position to prove our main result. To proceed, we first label our ideal arc system $\Gamma$ as follows:
\begin{itemize}
	\item Label the finite set of arcs $\Gamma_1$ in $S_1$ in any order, say $\gamma_1, \gamma_2, \dots , \gamma_{k_{1}}$.
	\item Continue the labelling for the finite set of arcs $\Gamma_2 \setminus \Gamma_1$ for $S_2$ as $\gamma_{k_1 +1}, \gamma_{k_1 +2}, \dots , \gamma_{k_2}$.
	\item Continue this process for each subsurface $S_i$.
\end{itemize}

We note that if $f, f' \in \Homeo^{+}(S,\partial S)$ represent the same mapping class in $\Mod(S)$, then after reducing $f(\Gamma)$ and  $f'(\Gamma)$ with respect to each other, we can assume that they are identical.

\begin{theorem}\label{main theorem}
Let $S$ be a connected oriented infinite-type surface with non-empty boundary. Then the big mapping class group $\Mod(S)$ is left-orderable.
\end{theorem}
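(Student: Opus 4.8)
\emph{Strategy.} The plan is to construct a positive cone $P\subseteq\Mod(S)$ and then invoke the positive-cone characterisation of left-orderability recalled above. The cone will be the infinite-type counterpart of the curve-diagram positive cone of Rourke and Wiest \cite{MR1756636}: the countable \emph{ordered} ideal arc system $\Gamma=\{\gamma_1,\gamma_2,\dots\}$ of \eqref{explicit stable Alexander system gamma} replaces a finite curve diagram, and the exhaustion $\{S_i\}_{i\ge1}$ of Proposition~\ref{prop:cmpexh} is used to push every question about $\Gamma$ down to a single finite-type subsurface $S_j$, where Lemmas~\ref{lemma:finalex}, \ref{lemma:isobdy}, \ref{isotopy lemma} and Propositions~\ref{prop:tight}, \ref{prop:reduction} are available.

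\emph{Construction of the cone.} Given $[f]\neq1$, choose a representative $f\in\Homeo^+(S,\partial S)$; by Lemmas~\ref{lemma:isobdy} and~\ref{isotopy lemma} we may assume $f|_B=\id$, so that $f$ restricts to a homeomorphism of every $S_k$. Since $\Gamma$ is a stable Alexander system, $f(\gamma_i)$ is non-isotopic to $\gamma_i$ for some $i$; let $n=n(f)$ be the least such index and fix $j$ with $\gamma_1,\dots,\gamma_n\subset S_j$. Applying Lemma~\ref{lemma:finalex} inside $\widehat{S_j}$ to the finite Alexander system $\{\gamma_1,\dots,\gamma_{n-1}\}$ and composing $f$ with a homeomorphism isotopic to $\id_S$, we may assume in addition that $f$ fixes $\gamma_1,\dots,\gamma_{n-1}$ pointwise; then $f(\gamma_n)$, placed in minimal position, is disjoint from $\gamma_1,\dots,\gamma_{n-1}$. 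We then compare the diagram $\bigl(f(\gamma_1),\dots,f(\gamma_n)\bigr)$ with $\bigl(\gamma_1,\dots,\gamma_n\bigr)$ inside $S_j$: the two agree through index $n-1$ and first differ along $\gamma_n$, and we declare $[f]\in P$ precisely when $f(\gamma_n)$ departs from $\gamma_n$ \emph{to the left} with respect to the fixed orientation of $S$, exactly as the sign is read off in the curve-diagram ordering of \cite{MR1756636} (when $\gamma_n\in\Gamma_j^{(1)}$ this takes place literally inside the disk $\widehat{S_j}\setminus\Gamma_j^{(1)}$).

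\emph{What must be verified.} First, well-definedness: the reduced position of $f(\Gamma)$ with respect to $\Gamma$ is unique up to isotopy fixing $\Gamma$ (Proposition~\ref{prop:tight}), so $n(f)$ and the sign depend only on $[f]$; and since the sign is determined in an arbitrarily small neighbourhood of the first discrepancy, which lies in $\interior(S_j)$ for all large $j$, it is independent of the auxiliary choice of $j$. Second, trichotomy: for $[f]\neq1$ exactly one of $[f]\in P$, $[f]^{-1}\in P$ holds — here $n(f^{-1})=n(f)$ (as $f^{-1}$ also fixes $\gamma_1,\dots,\gamma_{n-1}$ pointwise, and $f^{-1}(\gamma_n)\simeq\gamma_n$ would force $f(\gamma_n)\simeq\gamma_n$) and $f^{-1}(\gamma_n)$ departs on the opposite side. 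Third, $PP\subseteq P$: if $[f],[g]\in P$ with $m=n(f)\le k=n(g)$, normalise $f$ and $g$ simultaneously so that both fix $B$ pointwise, $f$ fixes $\gamma_1,\dots,\gamma_{m-1}$ and $g$ fixes $\gamma_1,\dots,\gamma_{k-1}$ pointwise, and pick $j$ with $S_j$ containing all arcs relevant to $f$, $g$ and $fg$, so that $fg|_{S_j}=f|_{S_j}\circ g|_{S_j}$; if $m<k$ then $g$ fixes $\gamma_m$ pointwise, whence $fg(\gamma_m)=f(\gamma_m)$ departs to the left and $n(fg)=m$, so $[fg]\in P$, while if $m=k$ one is reduced to the assertion — proved in \cite[\S2]{MR1756636} for the finite-type surface $\widehat{S_j}$ — that "departs to the left" is preserved under composition.

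\emph{Main obstacle.} The crux is the case $n(f)=n(g)$ of $PP\subseteq P$, namely that a product of two mapping classes each moving the first relevant arc to the left again does so. The intended route is to transfer this to the finite-type surface $\widehat{S_j}$ and invoke the Rourke–Wiest local analysis, which forces one to check the compatibility points above: that the simultaneous normalisations of $f$ and $g$ do not alter the mapping classes, that restriction to $S_j$ is multiplicative on the normalised representatives, and — using Propositions~\ref{prop:tight} and~\ref{prop:reduction} together with the locality of the sign — that enlarging $j$ never changes the outcome. Everything else is a routine transcription of the compact-surface argument through the exhaustion $\{S_i\}$.
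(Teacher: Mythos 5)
Your proposal is correct and follows essentially the same route as the paper: it uses the same inductively constructed ideal arc system $\Gamma$ of \eqref{explicit stable Alexander system gamma}, the same exhaustion and reduction machinery (Lemma~\ref{lemma:finalex}, Propositions~\ref{prop:tight} and~\ref{prop:reduction}), and the same ``branches off to the left'' sign read at the first arc where the images differ. The only difference is packaging --- you build the positive cone $P=\{f \mid \id <_\Gamma f\}$ and verify trichotomy and $PP\subseteq P$, while the paper defines the relation $f<_\Gamma g$ directly by comparing the reduced systems $f(\Gamma)$ and $g(\Gamma)$ and checks transitivity and left-invariance; the crux you isolate (the case $n(f)=n(g)$) is exactly the paper's combination of left-invariance with transitivity of the branching relation among pairwise reduced systems.
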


\begin{proof}
Note that, for each $f \in \Mod(S)$,  $f(\Gamma)$ is also an ideal arc system for $S$. Further, for any $f,g \in \Mod(S)$, by Proposition \ref{prop:tight}, we can assume that $f(\Gamma)$ and $g(\Gamma)$ are reduced with respect to each other.
\par
 Let $f, g \in \Mod(S)$ such that $f \neq g$. Since $\Gamma$ is a stable Alexander system for $S$, then there exists $\gamma_l \in \Gamma$ such that $f(\gamma_l) \ne g(\gamma_l)$. Without loss of generality, we can take $l$ to be the minimum such index.  We define $f <_\Gamma g$ if $g(\gamma_l)$ branches off $f(\gamma_l)$ to the left (that is, if an initial segment of $g(\gamma_l)$ lies to the left of $f(\gamma_l)$). Note that, we are using the fixed orientation of the surface $S$ and that of the ideal arcs.
 \par 
Let $f,g,h \in \Mod(S)$ with $f <_\Gamma g$ and $g <_\Gamma h$. In view of Propositions~\ref{prop:tight} and \ref{prop:reduction}, we can assume that $f(\Gamma), g(\Gamma), h(\Gamma)$ are pairwise reduced. Let $\gamma_i, \gamma_j \in \Gamma$ be the minimal indexed arcs such that $f(\gamma_i)\neq g(\gamma_i)$ and $g(\gamma_j)\neq h(\gamma_j)$. Thus,  $g(\gamma_i)$ branches off $f(\gamma_i)$ to the left and $h(\gamma_j)$ branches off $g(\gamma_j)$ to the left. Therefore,  $h(\gamma_k)$ branches off $f(\gamma_k)$ to the left where $k =\min\{i,j\}$, and consequently $f <_\Gamma h$.
\par

If $f,g,h \in \Mod(S)$ such that $f <_\Gamma g$, then $hf <_\Gamma hg$, since the homeomorphism of $S$ representing $h$ applied to $f(\Gamma)$ and $g(\Gamma)$ leaves the ideal arc systems reduced with respect to each other. This shows that the order $<_\Gamma$ is left-invariant, and the proof is complete.
\end{proof}

It is clear from the proof that the ordering $ <_\Gamma$ depends heavily on the ideal arc system $\Gamma$.
\medskip

\section{Generalised ideal arc systems and comparison of orderings}\label{section comparison of orderings}
In this section, we introduce a generalised ideal arc system on a surface, and prove that it also induces a left-ordering on the mapping class group. Further, we examine the conditions under which two generalised ideal arc systems induce the same left-ordering. This is a step towards understanding the space of all left-orderings on the big mapping class group. The next definition is borrowed from \cite[Definition 4.1]{MR1805402}.

\begin{definition}
A {\it generalised ideal arc system} for a surface $S$ is a countable labelled set $\Gamma= \{\gamma_k\}_{k \ge 1}$ of non-isotopic arcs on $S$ satisfying the following conditions: 
\begin{enumerate}
\item  $\cup_{k \ge 1} \interior(\gamma_k)$ is an embedding into $S$ and is disjoint from $\partial S \cup P$.
\item The starting point of $\gamma_i$ lies on $\cup_{k=1}^{i-1} \gamma_k \cup \partial S$.
\item The end point of $\gamma_i$ lies on $\cup_{k=1}^{i-1} \gamma_k \cup \interior(\gamma_i) \cup \partial S \cup P$.
\item $\Gamma$ is a stable Alexander system for $S$. 
\end{enumerate}
\end{definition}

\begin{example}
The ideal arc system for $S$ as in \eqref{explicit stable Alexander system gamma} is a specific generalised ideal arc system.
\end{example}

Next, we describe how to establish an ordering on $\Mod(S)$ using a generalised ideal arc system. 

\begin{proposition}\label{gen ideal arc implies left-orderability}
Let $S$ be a connected oriented infinite-type surface with non-empty boundary and $\Gamma$ be a generalised ideal arc system for $S$. Then $\Gamma$ determines a left-ordering on $\Mod(S)$.
\end{proposition}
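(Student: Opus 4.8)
The plan is to mirror the proof of Theorem~\ref{main theorem} verbatim, checking that the only structural facts about the ideal arc system $\Gamma$ used there are (i) that $\Gamma$ is a countable labelled stable Alexander system, (ii) that for any $f \in \Mod(S)$ the image $f(\Gamma)$ is again a generalised ideal arc system, and (iii) that any two such systems can be reduced with respect to one another (and three can be made pairwise reduced), so that the notion of one arc ``branching off to the left of'' another is well defined. Point (i) is part of the definition of a generalised ideal arc system, so the work is in (ii) and (iii).

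First I would observe that $f(\Gamma)$ is a generalised ideal arc system: conditions (1)--(3) in the definition are about the combinatorial pattern of endpoints lying on $\partial S \cup P$ and on earlier arcs, and these are preserved by the orientation-preserving homeomorphism $f$ (which fixes $\partial S$ pointwise and permutes $P$), while condition (4) is preserved because being a stable Alexander system is a topological property invariant under homeomorphism. Next I would invoke Proposition~\ref{prop:tight} and Proposition~\ref{prop:reduction}, noting (as the paper already remarks, ``since the properties under consideration are local'') that these reduction statements were stated for ideal arc systems but the proofs only use transversality and the local structure of $D$-disks, so they apply equally to generalised ideal arc systems; hence for $f \neq g$ in $\Mod(S)$ we may assume $f(\Gamma)$ and $g(\Gamma)$ are reduced with respect to each other, and for a triple we may assume pairwise reducedness.

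With these in place the argument is formal. Given $f \neq g$, since $\Gamma$ is a stable Alexander system there is some $\gamma_k$ with $f(\gamma_k)$ not isotopic to $g(\gamma_k)$; after reducing, take the least such index $l$ and declare $f <_\Gamma g$ if an initial segment of $g(\gamma_l)$ lies to the left of $f(\gamma_l)$, using the fixed orientations of $S$ and of the arcs. Totality and antisymmetry follow from the fact that at the least differing index the two arcs share an initial point (an endpoint on $\partial S \cup P$ or on a common earlier arc, these being equal for indices $< l$) and then one branches off strictly to the left or to the right of the other. Transitivity is the same min-index bookkeeping as in Theorem~\ref{main theorem}: if $f<_\Gamma g$ at index $i$ and $g<_\Gamma h$ at index $j$, then for $k=\min\{i,j\}$ the arc $h(\gamma_k)$ branches off $f(\gamma_k)$ to the left. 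Left-invariance holds because a homeomorphism representing $h$ carries a reduced pair to a reduced pair and preserves orientation, hence preserves the ``branches to the left'' relation, giving $hf <_\Gamma hg$.

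The main obstacle I expect is purely expository rather than mathematical: one must argue carefully that Propositions~\ref{prop:tight} and~\ref{prop:reduction} and the well-definedness of ``branching off to the left'' genuinely survive the passage from disjoint ideal arcs to the possibly self-abutting, chained arcs allowed in a generalised ideal arc system — in particular that at the minimal differing index the two images really do emanate from a common point so that ``left'' makes sense. I would handle this by pointing out that for all indices below the minimal differing one the images agree as isotopy classes, so after the reduction of Proposition~\ref{prop:tight} we may take them to literally coincide; thus $f(\gamma_l)$ and $g(\gamma_l)$ start at the same point of $f(\cup_{k<l}\gamma_k)\cup\partial S = g(\cup_{k<l}\gamma_k)\cup\partial S$, and the local picture at that point is exactly the one used in~\cite{MR1756636}, where ``branches off to the left'' is defined. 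Everything else is a transcription of the proof of Theorem~\ref{main theorem}.
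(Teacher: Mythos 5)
Your overall strategy is the same as the paper's (compare images of the arcs at the least index where they differ, and declare the one branching off to the left to be larger), but there is one case allowed by the definition of a generalised ideal arc system that your write-up does not actually handle, and it is precisely the case the paper's proof is forced to treat separately: condition (3) permits the end point of $\gamma_i$ to lie in $\interior(\gamma_i)$, so $\gamma_i$ may be a ``lasso'' whose terminal point abuts its own interior. You flag ``self-abutting, chained arcs'' as the expected obstacle, but the resolution you offer --- that for indices below the minimal differing one the images agree, hence $f(\gamma_l)$ and $g(\gamma_l)$ share a starting point --- only addresses where the arcs \emph{begin}. It says nothing about how to put two non-embedded lasso images in minimal position, nor why ``branches off to the left'' is isotopy-invariant when each arc has a self-intersection at its end point; the bigon/$D$-disk calculus of Propositions~\ref{prop:tight} and~\ref{prop:reduction} is stated for systems of disjoint embedded ideal arcs and does not transfer to such arcs for free. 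The paper resolves this by an explicit auxiliary construction: it replaces $\gamma_i$ by an embedded arc $\gamma_i'$ obtained by sliding the end point of $\gamma_i$ backwards along the arc until it reaches the starting point (Figure~\ref{pulling end-point}), and then compares $f(\gamma_i')$ with $g(\gamma_i')$ as in the generic case. Some such device is needed; without it your definition of $<_\Gamma$ is incomplete on this class of systems.

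A secondary, smaller divergence: rather than asserting that Propositions~\ref{prop:tight} and~\ref{prop:reduction} extend verbatim to generalised systems (which, as above, is not obvious because the arcs need not be disjoint or embedded), the paper's proof normalizes differently --- it applies Lemma~\ref{lemma:finalex} to the finite Alexander system $\{\gamma_1,\dots,\gamma_{i-1}\}$ so that $f$ and $g$ literally agree on $\cup_{k=1}^{i-1}\gamma_k$, and then minimizes the intersections of just the two arcs $f(\gamma_i)$ and $g(\gamma_i)$ by an isotopy fixing $f(\cup_{k=1}^{i-1}\gamma_k)$. You would do well to adopt that route, since it only ever requires putting a single pair of arcs in minimal position inside the cut surface, rather than reducing two entire generalised systems against each other.
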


\begin{proof}
Let $f, g\in \Mod(S)$. Let $i$ be the smallest index such that $f(\gamma_i)$ and $g(\gamma_i)$ are not isotopic. By Lemma~\ref{lemma:finalex}, we can replace $f$ or $g$ by isotopic maps such that the restrictions of $f$ and $g$ to $\cup_{k=1}^{i-1} \gamma_k$ agree. 
\par
Suppose that the end point of $\gamma_i$ does not lie in $\interior(\gamma_i)$. Since the starting point of $\gamma_i$ lies on $\cup_{k=1}^{i-1} \gamma_k \cup \partial S$, and both $f,g$ agree on $\cup_{k=1}^{i-1} \gamma_k \cup \partial S$ as maps, it follows that $f(\gamma_i)$ and $g(\gamma_i)$ have the same starting point. Consequently, they both lie in the same path component of  $S \setminus f(\cup_{k=1}^{i-1} \gamma_k)$. Next, we isotope $f$ so as to minimise the number of intersections of $f(\gamma_i)$ and $g(\gamma_i)$ while keeping $f(\cup_{k=1}^{i-1} \gamma_k)$   fixed throughout the isotopy. Finally, we observe that the initial segments of $f(\gamma_i)$ and $g(\gamma_i)$ depart from their common starting point into the interior of $S \setminus f(\cup_{k=1}^{i-1} \gamma_k)$ in different directions, with one going more to the left than the other. We say that $f <_\Gamma g$ if $g(\gamma_i)$ branches off $f(\gamma_i)$ to the left; otherwise, we say that $g<_\Gamma f$. 
\par 
Suppose that the end point of $\gamma_i$ lie in $\interior(\gamma_i)$. In this case, we consider the arc $\gamma_i'$, which is obtained by sliding the end point of $\gamma_i$ backwards along $\gamma_i$  such that the start and end point coincide. See Figure \ref{pulling end-point}. We compare $f(\gamma_i')$ and $g(\gamma_i')$ as in the preceding case. It is routine to verify that the ordering $<_\Gamma$ is a left-ordering on $\Mod(S)$.
\end{proof}

\begin{figure}[H]
	\labellist
	\tiny
	\pinlabel $\gamma_i$ at 130, 123
	\pinlabel $\gamma_i'$ at 430, 123
	\endlabellist
\includegraphics[width=4in]{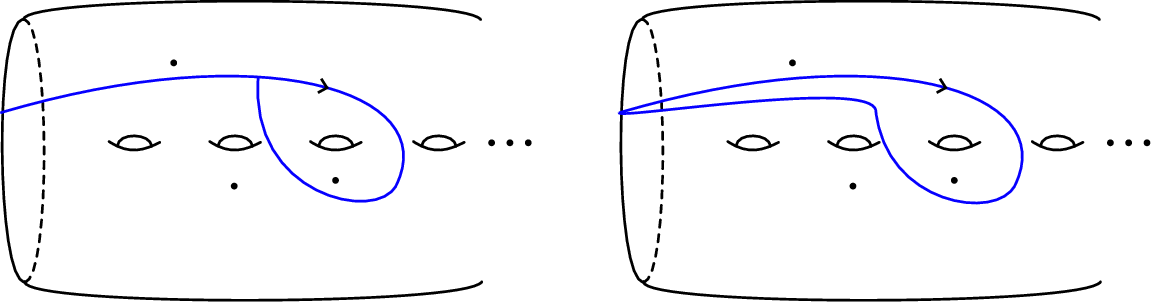}
\caption{Pulling the end point of $\gamma_i$ to the boundary of the surface $S$.}
\label{pulling end-point}
\end{figure} 

 Let $\mathcal{I}$ be the set of all generalised ideal arc systems for $S$. We can view a generalised ideal arc system as a subset of all maps from $\sqcup_{k \ge 1}~ I_k$ to $S$, where $I_k=[0,1]$. With this view, $\mathcal{I}$ can be equipped with the compact open topology. 
 
\begin{definition}
A {\it loose isotopy} between two generalised ideal arc systems on $S$ is generated by the following three types of equivalences:
\begin{enumerate}
\item Continuous deformation: Two generalised ideal arc systems $\Gamma$ and $\Delta$ are considered equivalent if they belong to the same path component of $\mathcal{I}$. Equivalently, there exists a continuous map $H:  (\sqcup_{k \ge 1}~ I_k) \times I \to S$ such that 
\begin{enumerate}
\item $H|_{I_k\times \{0\}}=\gamma_k$ and $H|_{I_k\times \{1\}}=\delta_k$ for each $k \ge 1$.
\item $\{H|_{I_k \times t}\}_{k \ge 1}$ is a generalised ideal arc system for each $t \in I$.
\end{enumerate}
\begin{figure}[H]
		\labellist
	\tiny
	\pinlabel $\gamma_1$ at 130, 123
	\pinlabel $\gamma_1'$ at 150, 90
	\pinlabel $\gamma_2$ at 340, 115
	\pinlabel $\gamma_2'$ at 400, 87
	\pinlabel $\gamma_1$ at 330, 85
	\endlabellist
	\includegraphics[width=5in]{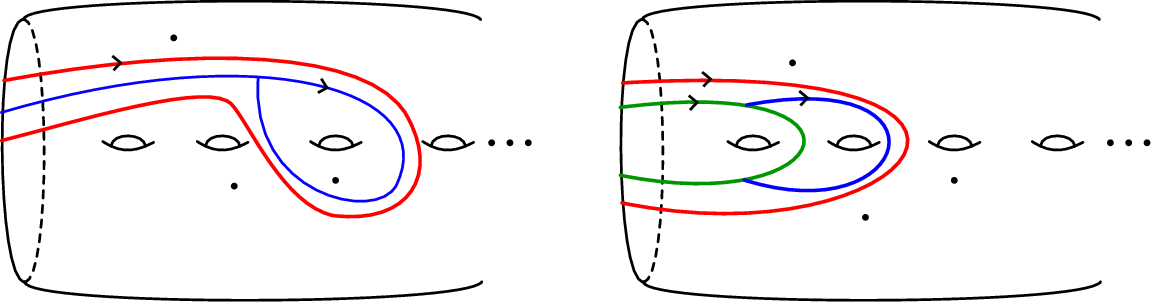}
	\caption{Continuous deformation of $\gamma_1$ to $\gamma_1'$ in the first subfigure and continuous deformation of $\gamma_2$ to $\gamma_2'$ in the second subfigure.}
	\label{continuous_deformation}
\end{figure}
\item Pulling loops tight around punctures: If a segment of an arc $\gamma_i$ bounds a disk with one puncture, then it can be pulled tight so that the end point of $\gamma_i$ is the puncture itself. See Figure \ref{pulling around puncture}.
\begin{figure}[H]
\includegraphics[width=5in]{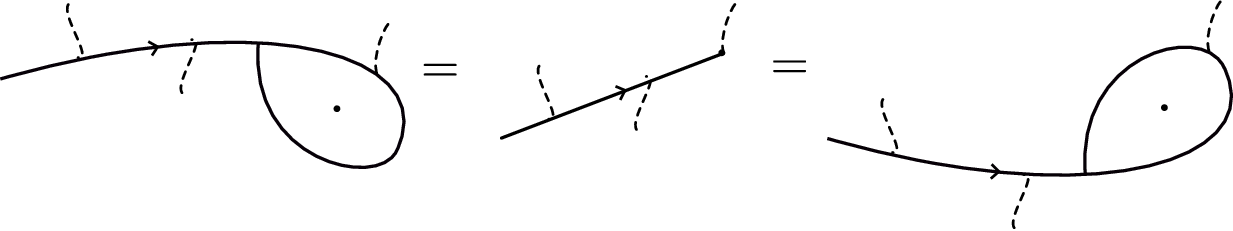}
\caption{Pulling a loop tight around a puncture.}
\label{pulling around puncture}
\end{figure}
\item If for some $i$, one of the  components of $S \setminus \cup_{k=1}^{i-1} \gamma_k$ is a torus with one boundary component or an annulus, and the arc $\gamma_i$ lies in that connected component, then its orientation can be reversed.
\end{enumerate}
 \end{definition}

 \begin{remark}\label{deforming to puncture looping arc}
 For each arc $\gamma_i$ of $\Gamma= \{\gamma_k \}_{k \ge 1}$, let $C_i$ denote the component of $S\setminus \cup_{k=1}^{i-1} \gamma_k$ that contains $\gamma_i$. If $C_i \setminus \gamma_i$ has a  component that is a disk with one puncture, then we say that  $\gamma_i$ is an {\it almost puncture looping arc}. For each such arc $\gamma_i$, by continuous deformation and pulling tight around a puncture, we see that $\Gamma$ is loosely isotopic to a generalised ideal arc system $\Gamma'=  \{\gamma'_k \}_{k \ge 1}$ (see Figure \ref{fig:ias1}) such that
 \begin{itemize}
\item  $\gamma_k=\gamma'_k$ for all $1 \le k  \le i-1$,
\item $\gamma'_i$ is obtained from $\gamma_i$ by first sliding its end point along $\cup_{k=1}^{i-1} \gamma_k \cup \partial S$ and then pulling it tight so that its end point is the prescribed puncture.
  \end{itemize}
\begin{figure}[H]
	\labellist
	\tiny
	\pinlabel $\gamma_1$ at 35, 65
	\pinlabel $\gamma_3$ at 50, 96
	\pinlabel $\gamma_2$ at 35, 78
	\pinlabel $\gamma_1'$ at 273, 65
	\pinlabel $\gamma_3'$ at 280, 99
	\pinlabel $\gamma_2'$ at 274, 80
	\endlabellist
	\includegraphics[width=5in]{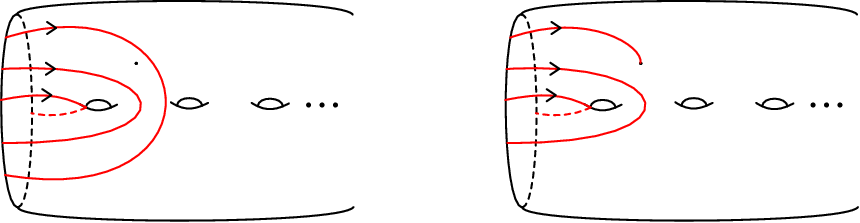}
	\caption{Loose isotopy between $\Gamma$ and $\Gamma'$.}
	\label{fig:ias1}
\end{figure}
\end{remark}

It is known that the isotopy classes of essential ideal arcs on the torus with one boundary component are in one-to-one correspondence with the isotopy classes of essential
simple closed curves on the torus (see \cite{MR4584864}). Thus, we may refer to  simple closed curves on the torus or equivalently ideal arcs on the torus with one boundary component by their associated slopes, which are of the form $p/q$ for some co-prime integers $p$ and $q$. We note that an ideal arc with slope $p/q$ intersects the meridian $|p|$ times and the longitude $|q|$ times in minimal position. For convenience, we refer to an ideal arc with slope $p/q$ on the torus with one boundary component as a {\it $(p,q)$ ideal arc}.

\begin{lemma}\label{lemma:S11}
Let $\Gamma = \{\gamma_1, \gamma_2\}$ be an ideal arc system for the torus $S_1^1$  with one boundary component and $\Delta=\{\delta_1, \delta_2\}$ be another ideal arc system obtained from $\Gamma$ by reversing the orientation of one or both of the arcs. Then both the ideal arc systems induce the same left-ordering on $\Mod(S_1^1)$.
\end{lemma}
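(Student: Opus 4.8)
First I would record two simplifications that are special to $S=S_1^1$. Since $S$ is compact with a single boundary component and no punctures we have $P=\emptyset$, so every ideal arc of $\Gamma$ has its two endpoints on $\partial S$ or on the preceding arc; and a two-element ideal arc system on $S$ is necessarily a curve diagram (if $\gamma_1,\gamma_2$ did not fill $S$, a Dehn twist supported in a non-disc complementary component---possibly the boundary twist---would fix the isotopy classes of $\gamma_1,\gamma_2$ without being isotopic to the identity, contradicting stability, and an Euler characteristic count then forces a single disc). In particular $\gamma_1$ is an essential non-separating arc, $S\setminus\gamma_1$ is an annulus, and no arc of $\Gamma$ is looping, so the looping subtlety of Proposition~\ref{gen ideal arc implies left-orderability} does not arise.

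The plan is to deduce the lemma from a single symmetry, the hyperelliptic involution $\iota$ of $S_1^1$: an orientation-preserving homeomorphism with $\iota(\partial S)=\partial S$, with $\iota(c)$ isotopic to $c$ for every essential simple closed curve $c$, and with $\iota(\gamma)$ isotopic to $\overline{\gamma}$ for every essential arc $\gamma$, where $\overline{\gamma}$ denotes $\gamma$ with reversed orientation; in particular $\iota(\gamma_i)$ is isotopic to $\overline{\gamma_i}$ for $i=1,2$. The key input is that $\iota$ centralizes $\Mod(S_1^1)$: writing an arbitrary $g\in\Mod(S_1^1)$ as a product of Dehn twists about essential simple closed curves, the conjugate $\iota g\iota^{-1}$ is the corresponding product of Dehn twists about the $\iota$-images of those curves and in the same direction (as $\iota$ is orientation-preserving); since $\iota$ fixes every curve up to isotopy and $\iota g\iota^{-1}$ fixes $\partial S$ pointwise, this gives $\iota g\iota^{-1}=g$ in $\Mod(S_1^1)$. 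Consequently, for every essential arc $\gamma$ and every $g\in\Mod(S_1^1)$, $\iota\bigl(g(\gamma)\bigr)=\bigl(\iota g\iota^{-1}\bigr)\bigl(\iota(\gamma)\bigr)$ is isotopic to $g\bigl(\iota(\gamma)\bigr)$, hence to $\overline{g(\gamma)}$.

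Now the argument runs as follows. Both $<_\Gamma$ and $<_\Delta$ are left-invariant (Proposition~\ref{gen ideal arc implies left-orderability}), so it suffices to prove $\id<_\Gamma h\iff\id<_\Delta h$ for every $h\neq\id$. Let $\ell$ be the smallest index with $h(\gamma_\ell)$ not isotopic to $\gamma_\ell$; since each $\delta_k$ is $\gamma_k$ or $\overline{\gamma_k}$ and reversal is an involution on oriented isotopy classes, $\ell$ is also the smallest index with $h(\delta_\ell)$ not isotopic to $\delta_\ell$. If $\delta_\ell=\gamma_\ell$, the recipes defining $\id<_\Gamma h$ and $\id<_\Delta h$ at index $\ell$ are verbatim identical. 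If $\delta_\ell=\overline{\gamma_\ell}$, isotope $h$ to fix $\bigl(\bigcup_{k<\ell}\gamma_k\bigr)\cup\partial S$ pointwise (Lemma~\ref{lemma:finalex}); then $\id<_\Gamma h$ says $h(\gamma_\ell)$ branches off $\gamma_\ell$ to the left, whereas $\id<_\Delta h$ says $\overline{h(\gamma_\ell)}$ branches off $\overline{\gamma_\ell}$ to the left. Applying the orientation-preserving homeomorphism $\iota$ to the first configuration, and invoking $\iota(h(\gamma_\ell))\simeq\overline{h(\gamma_\ell)}$, $\iota(\gamma_\ell)\simeq\overline{\gamma_\ell}$, and the invariance of ``branches off to the left'' under ambient isotopy, the two statements are seen to be equivalent. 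Hence $\id<_\Gamma h\iff\id<_\Delta h$, so $<_\Gamma\,=\,<_\Delta$.

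I expect the main obstacle to be the claim that $\iota$ centralizes $\Mod(S_1^1)$: this is where the low complexity of $S_1^1$ enters, and it relies on the existence of the hyperelliptic involution and on $\Mod(S_1^1)$ being generated by Dehn twists, which are fixed by conjugation by $\iota$ because $\iota$ preserves all slopes. The remaining points---that the induced ordering depends only on the labelled isotopy classes of the arcs, and the compatibility of the reductions with passing from an arc to its reverse---are routine bookkeeping. Finally, note that the argument did not distinguish between reversing $\gamma_1$, reversing $\gamma_2$, or reversing both, so this single symmetry handles all the cases at once.
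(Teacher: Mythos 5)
Your argument is correct, but it takes a genuinely different route from the paper. The paper first uses the change of coordinates principle to reduce to the standard filling pair of Figure~\ref{fig:ias}, then classifies $f(\gamma_1)$ by its slope $(\pm p,\pm q)$ and checks sign-agreement of $<_\Gamma$ and $<_\Delta$ case by case, with a separate computation for the stabiliser of $\gamma_1$ (powers of the Dehn twist along the core of the annulus $S\setminus\gamma_1$, acting on $\gamma_2$ versus $\delta_2$). You replace all of this with a single global symmetry: the hyperelliptic involution $\iota$, which preserves every slope (hence every isotopy class of essential simple closed curve and, acting as $-I$ on $H_1$, sends every essential arc to its reverse up to isotopy) and therefore centralises $\Mod(S_1^1)$ because the group is generated by Dehn twists $T_a,T_b$ with $\iota T_c\iota^{-1}=T_{\iota(c)}=T_c$. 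Pushing a reduced configuration $(\gamma_\ell,\,h'(\gamma_\ell))$ forward by the orientation-preserving map $\iota$ then converts the $\Gamma$-comparison directly into the $\Delta$-comparison with the same branching direction, handling all three reversal patterns and both indices uniformly; in particular the paper's annulus case becomes the observation that $\iota$ commutes with $T_c^n$. This is slicker and avoids the change-of-coordinates reduction (which in the paper needs Remark~\ref{rmk:conjgateordering} to be legitimate), at the cost of importing the Birman--Hilden-type facts about $\iota$.

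Two points deserve a little more care than ``routine bookkeeping.'' First, the properties of $\iota$ you use ($\iota(c)\simeq c$ for all essential simple closed curves, $\iota(\gamma)\simeq\overline{\gamma}$ for all essential arcs, with isotopies fixing $\partial S$ only setwise) should be justified explicitly, e.g.\ from the model $x\mapsto -x$ on $\mathbb{R}^2/\mathbb{Z}^2$ with an invariant disk about a fixed point removed, together with the classification of curves and arcs on $S_1^1$ by slope. Second, transporting ``branches off to the left'' through $\iota$ produces a configuration based at $\iota(p)$ rather than at the starting point of $\delta_\ell$, so you are implicitly using that the sign of $h$ at index $\ell$ depends only on the ambient-isotopy class (setwise on $\partial S$) of the reduced configuration; this well-definedness is also used silently throughout the paper, so it is not an extra burden specific to your proof, but it is where the boundary-twist subtleties of the setwise isotopy convention live and is worth a sentence. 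Neither point is a gap in the strategy.
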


\begin{proof}
 By the  change of coordinate principle, if two ideal arc systems of $S_1^1$ have the same set of end points, then there exists a $h \in \Mod(S_1^1)$ which maps one ideal arc system onto the other. Thus, it is sufficient to prove the assertion for the ideal arc system $\Gamma = \{\gamma_1, \gamma_2\}$ as shown in Figure~\ref{fig:ias}.
\begin{figure}[H]
	\labellist
	\tiny
	\pinlabel $\gamma_1$ at 75, 112
	\pinlabel $\gamma_2$ at 50, 93
		\endlabellist
\includegraphics[width=1.8in]{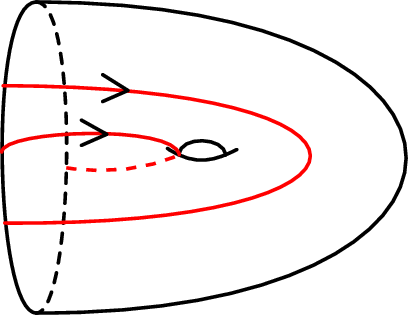}
\caption{An ideal arc system for the torus with one boundary component.}
\label{fig:ias}
\end{figure}

If $f \in \Mod(S_1^1)$, then either $f(\gamma_1)$ is an ideal arc of the form $(\pm p,\pm q)$, where $p, q \in \mathbb{N}$ with $\gcd(p,q)= 1$, or an ideal arc of the form $(\pm 1, 0)$ or $(0, \pm 1)$.
\begin{itemize}
\item  If $f(\gamma_1)$ is a $(p,q)$ or $(-p,-q)$ or $(-1,0)$ or $(0,\pm 1)$ ideal arc, then $\gamma_1$ branches off $f(\gamma_1)$ to the left.
Similarly,  $\delta_1$  branches off  $f(\delta_1)$ to the left. Consequently, $f$ is a negative element in both the orderings $<_\Gamma$ and $<_\Delta$. Figures \ref{fig:23} and \ref{fig:-10} illustrate the cases $(p,q) = (3, 2)$, $(-1, 0)$ and $(0,1)$.

\begin{multicols}{2}
\begin{figure}[H]
	\labellist
	\tiny
	\pinlabel $\gamma_1$ at 125, 135
	\pinlabel $f(\gamma_1)$ at 100, 98
		\endlabellist
\includegraphics[width=1.8in]{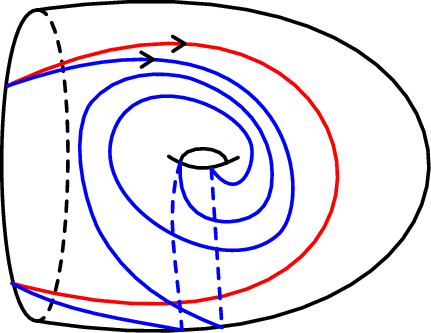}
\caption{$f(\gamma_1) = (3, 2)$.}
\label{fig:23}
\end{figure}
\begin{figure}[H]
	\labellist
	\tiny
	\pinlabel $\gamma_1$ at 145, 130
	\pinlabel $(0,1)$ at 55, 98
	\pinlabel $(-1,0)$ at 55, 72
		\endlabellist
\includegraphics[width=1.8in]{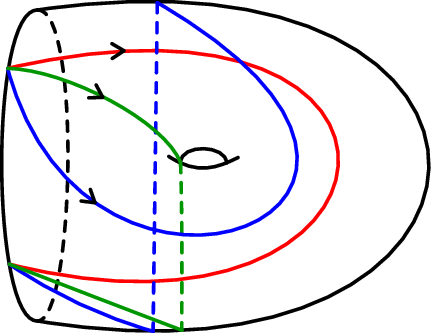}
\caption{$f(\gamma_1) = (-1,0)$ in blue and  $f(\gamma_1)=(0,1)$ in green.}
\label{fig:-10}
\end{figure}
\end{multicols}
\item If $f(\gamma_1)$ is a $(-p,q)$ or $(p,-q)$ ideal arc, then $f(\gamma_1)$ branches off $\gamma_1$ to the left.  Similarly, $f(\delta_1)$ branches off $\delta_1$ to the left, and hence $f$ is a positive element in both the orderings $<_\Gamma$ and $<_\Delta$.  Figure~\ref{fig:-23} illustrates the case $(p,q) = (3, -2)$.
\begin{figure}[H]
	\labellist
	\tiny
	\pinlabel $\gamma_1$ at 125, 135
	\pinlabel $f(\gamma_1)$ at 100, 96
		\endlabellist
\includegraphics[width=1.8in]{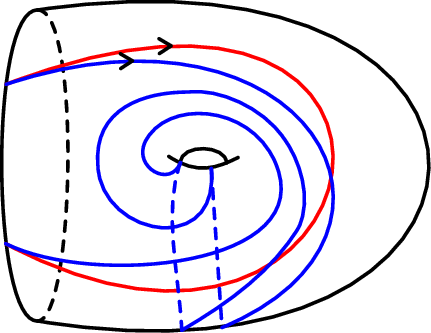}
\caption{$f(\gamma_1) = (3, -2)$.}
\label{fig:-23}
\end{figure}

\item If $f(\gamma_1)=\gamma_1$ with the same orientation, that is, $f(\gamma_1)$ is a $(1,0)$ ideal arc, then $f$ descends as a mapping class of the annulus $S \setminus \gamma_1$. In this case, $f$ is some power of the Dehn twist $T_c$ along the central curve $c$ of the annulus $S \setminus \gamma_1$. If both $\gamma_2$ and $\delta_2$ descend to $S \setminus \gamma_1$ with the same orientation, then there is nothing to prove. Suppose that $\gamma_2$ and $\delta_2$ descend to $S \setminus \gamma_1$ with opposite orientations.  If $f= T_c^n$ for some $n>0$, then $ f(\gamma_2)$ branches off $\gamma_2$ to the left, and $f(\delta_2)$ branches off $\delta_2$ to the left. If $f= T_c^n$ for some $n<0$, then $ \gamma_2$ branches off $f(\gamma_2)$ to the left, and $\delta_2$ branches off $f(\delta_2)$ to the left.  Thus, in each case, $f$ is a positive element with respect to $<_\Gamma$ if and only if it is a positive element with respect to $<_\Delta$.  See Figure~\ref{fig:T_c(gamma_2)} for an illustration.
\begin{figure}[H]
	\labellist
	\tiny
	\pinlabel $c$ at 180, 117
	\pinlabel $\gamma_2$ at 50, 80
	\pinlabel $T_c(\gamma_2)$ at 225, 50
		\endlabellist
\includegraphics[width=2in]{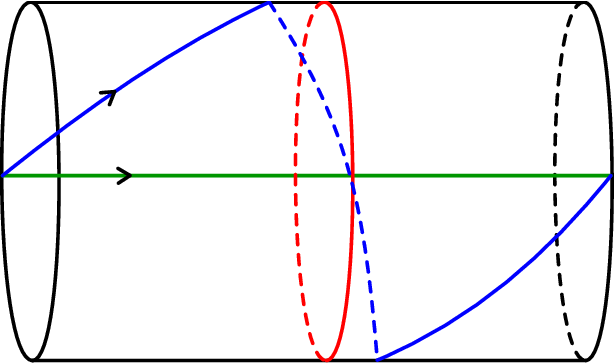}
\caption{The curve $T_c(\gamma_2)$.}
\label{fig:T_c(gamma_2)}
\end{figure}
\end{itemize}
Thus, in all the cases, the orderings induced by $\Gamma$ and $\Delta$ are the same.
\end{proof}

The proof of the following lemmas are along similar lines as that of \cite[Theorem 5.2(a)]{MR1805402}, but we present them here for the sake of completeness.

\begin{lemma}\label{lem:pullingtigth}
Let $\Delta$ be an ideal arc system for $S$ such that an ideal arc $\delta_i \in \Delta$ is a loop enclosing exactly one puncture, and let $\Gamma$ be another ideal arc system obtained from $\Delta$ by squashing the arc $\delta_i$ into an arc $\gamma_i$ that has the same starting point as that of $\delta_i$ and has the puncture as its end point.  Then $\Delta$ and $\Gamma$ induce the same left-ordering on $\Mod(S)$.
\end{lemma}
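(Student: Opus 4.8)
The plan is to prove that for every $f,g\in\Mod(S)$ one has $f<_\Delta g$ if and only if $f<_\Gamma g$, which forces the two left-orderings to coincide. Since $\Delta$ and $\Gamma$ differ only in their $i$-th arc, i.e. $\gamma_k=\delta_k$ for all $k\ne i$, the following is immediate: for a pair $f\ne g$, the smallest index at which $f(\cdot)$ and $g(\cdot)$ fail to be isotopic is the same for both systems whenever that index is $<i$, and in that case the two orderings decide the comparison of $f$ and $g$ using literally the same arc, hence agree. So it suffices to treat pairs $f,g$ that agree up to isotopy on $\gamma_1,\dots,\gamma_{i-1}=\delta_1,\dots,\delta_{i-1}$; by Lemma~\ref{lemma:finalex} (applied to the finite Alexander system $\{\delta_1,\dots,\delta_{i-1}\}$) we may, after replacing one of $f,g$ by an isotopic map, assume that $f$ and $g$ literally coincide on $\cup_{k<i}\delta_k\cup\partial S$, and in particular that $f$ and $g$ send the component of $S\setminus\cup_{k<i}\delta_k$ carrying $\delta_i$ (equivalently $\gamma_i$) to one and the same subsurface.

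The key point is that the move carrying $\delta_i$ to $\gamma_i$ is canonical, hence equivariant under homeomorphisms. Inside the subsurface above, $\delta_i$ is isotopic to the simple loop $\widehat{\gamma_i}$ based at the common starting point of $\gamma_i$ that runs alongside $\gamma_i$ out to the puncture on one side, makes a small turn about the puncture, and returns to the basepoint alongside $\gamma_i$ on the other side. Applying the homeomorphism $f$ gives $f(\delta_i)\simeq\widehat{f(\gamma_i)}$, and likewise $g(\delta_i)\simeq\widehat{g(\gamma_i)}$. Since $\alpha\mapsto\widehat{\alpha}$ is injective on isotopy classes of embedded arcs ending at the puncture, we get $f(\gamma_i)\simeq g(\gamma_i)$ if and only if $f(\delta_i)\simeq g(\delta_i)$. (If $f$ and $g$ carry the relevant puncture to distinct punctures, both sides of this equivalence fail, so it still holds.) Thus $f=g$ is detected at index $i$ by one system exactly when it is by the other, and when $f\ne g$ the deciding index equals $i$ for $<_\Delta$ precisely when it does for $<_\Gamma$.

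It remains to compare the branching directions when $f(\gamma_i)\not\simeq g(\gamma_i)$. Neither $\gamma_i$ (whose endpoint is a puncture) nor $\delta_i$ (a loop based at a point of $\cup_{k<i}\delta_k\cup\partial S$) has its endpoint in its own interior, so the unmodified clause of the definition of $<_\Gamma$ in Proposition~\ref{gen ideal arc implies left-orderability} applies to both. Put $f(\gamma_i)$ and $g(\gamma_i)$ in minimal position keeping $f(\cup_{k<i}\delta_k)=g(\cup_{k<i}\delta_k)$ fixed; then the loops $\widehat{f(\gamma_i)}$ and $\widehat{g(\gamma_i)}$, being small two-sided parallel push-offs of them, are simultaneously in minimal position, and near the first point at which $f(\gamma_i)$ and $g(\gamma_i)$ separate, the outgoing strand of $\widehat{f(\gamma_i)}$ hugs $f(\gamma_i)$ while that of $\widehat{g(\gamma_i)}$ hugs $g(\gamma_i)$. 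Because that separation is transverse, the infinitesimal side-shift does not change which arc lies to the left, so $\widehat{g(\gamma_i)}$ branches off $\widehat{f(\gamma_i)}$ to the left exactly when $g(\gamma_i)$ branches off $f(\gamma_i)$ to the left. Combined with the previous paragraph this yields $f<_\Delta g\iff f<_\Gamma g$.

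The main obstacle is the rigour of this last paragraph: one has to fix orientation conventions carefully (the loop $\delta_i$ runs along both sides of $\gamma_i$, and one must check that it is the outgoing strand, lying on the side of $\gamma_i$ prescribed by the orientation of $S$, that governs the ``branches off to the left'' relation), handle the case where $f(\gamma_i)$ and $g(\gamma_i)$ share a nontrivial initial segment before diverging, and verify that minimal position for the pair of arcs and for the pair of loops can be achieved simultaneously. The remaining steps are routine unwinding of the definitions.
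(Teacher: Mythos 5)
Your proposal is correct and follows essentially the same route as the paper: you split on whether the first index of disagreement is less than, equal to, or greater than $i$, and in the critical case you compare $f(\delta_i)$ and $g(\delta_i)$ via the push-off loops $\widehat{f(\gamma_i)}$, $\widehat{g(\gamma_i)}$, which are exactly the boundary curves of small regular neighbourhoods of $f(\gamma_i)$, $g(\gamma_i)$ used in the paper's argument. The only cosmetic difference is that you argue both implications via injectivity of $\alpha\mapsto\widehat{\alpha}$ on isotopy classes, whereas the paper proves only $f<_\Gamma g\Rightarrow f<_\Delta g$ and lets totality of the two orderings do the rest.
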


\begin{proof}
 Let $f, g \in \Mod(S)$  be such that $f <_\Gamma g$. We claim that $f <_{\Delta} g$. 
 \begin{itemize}
 \item   If $f(\gamma_j) \neq g(\gamma_j)$ for some $1 \le j \le i-1$, then the claim is evident since the first $i-1$ arcs of $\Gamma$ and $\Delta$ are the same. 
 \item If $f(\gamma_j)=g(\gamma_j)$ for each $1 \le j \le i$ and $f(\gamma_k) \neq g(\gamma_k)$ for some $i<k$, then, obviously, the first $i-1$ arcs of $f(\Delta)$ and $g(\Delta)$ are the same. Since the boundary curves of sufficiently small regular neighbourhoods of $f(\gamma_i)$ and $g(\gamma_i)$ are isotopic to $f(\delta_i)$ and $g(\delta_i)$ respectively, it follows that $f(\delta_i)$ and $g(\delta_i)$ are also isotopic. Hence, the claim follows in this case as well.
 \item Finally, suppose that $f(\gamma_j)=g(\gamma_j)$  for each $1 \le j \le i-1$ and $f(\gamma_i) \ne g(\gamma_i)$. The arcs $f(\gamma_i)$ and $g(\gamma_i)$ are reduced with respect to each other such that $g(\gamma_i)$ branches off $f(\gamma_i)$ to the left. Since the boundary curves of sufficiently small regular neighbourhoods of $f(\gamma_i)$ and $g(\gamma_i)$ are isotopic to $f(\delta_i)$ and $g(\delta_i)$, respectively, it follows that  $g(\delta_i)$ branches off $f(\delta_i)$ to the left, and hence  $f <_{\Delta}g$ (see Figure~\ref{fig:nbd}). 
 \end{itemize}
\end{proof} 
\begin{figure}[H]
		\labellist
	\tiny
	\pinlabel $f(\gamma_i)$ at 140, 55
	\pinlabel $g(\gamma_i)$ at 130, 93
	\pinlabel $g(\delta_i)$ at 170, 117
    \pinlabel $f(\delta_i)$ at 170, 30
	\endlabellist
\includegraphics[width=2.5in]{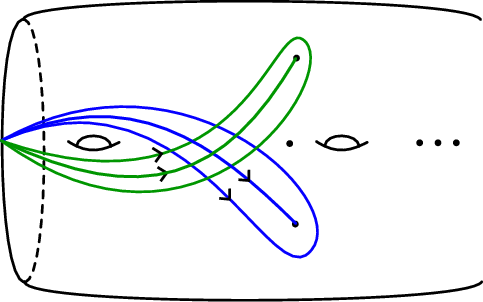}
\caption{Small regular neighbourhoods of arcs $f(\gamma_i)$ and $g(\gamma_i)$ isotopic to $f(\delta_i)$ and $g(\delta_i)$, respectively.} 
\label{fig:nbd}
\end{figure} 

\begin{lemma}\label{lem:condef}
If $\Gamma$ and $\Delta$ are ideal arc systems for $S$ such that both are continuously deformable, then they induce the same left-ordering on $\Mod(S)$.
\end{lemma}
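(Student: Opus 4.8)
The plan is to reduce the statement to a connectedness argument on the unit interval $I$. Let $H\colon(\sqcup_{k\ge 1}I_k)\times I\to S$ be the continuous deformation supplied by the hypothesis, and write $\Gamma_t=\{\gamma_{k,t}\}_{k\ge 1}$ with $\gamma_{k,t}=H|_{I_k\times\{t\}}$, so that $\Gamma_0=\Gamma$, $\Gamma_1=\Delta$, and every $\Gamma_t$ is a generalised ideal arc system. Fix $f,g\in\Mod(S)$ with $f\ne g$. Since each $<_{\Gamma_t}$ is a strict total left-ordering by Proposition~\ref{gen ideal arc implies left-orderability}, the sets $T_+=\{t\in I:f<_{\Gamma_t}g\}$ and $T_-=\{t\in I:g<_{\Gamma_t}f\}$ partition $I$. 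It therefore suffices to show that $T_+$ and $T_-$ are both open: connectedness of $I$ then forces one of them to be empty, so the order relation between $f$ and $g$ is the same for $\Gamma_0=\Gamma$ and $\Gamma_1=\Delta$, and since $f,g$ were arbitrary, $<_\Gamma$ and $<_\Delta$ coincide. By symmetry it is enough to treat $T_+$.

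The first step I would carry out is to show that the ``first disagreeing index'' is independent of $t$. Fix representatives $F,G\in\Homeo^+(S,\partial S)$ of $f$ and $g$. The key tool is the parametrised isotopy extension theorem applied, for each $N$, to the continuous family $s\mapsto\gamma_{1,s}\cup\cdots\cup\gamma_{N,s}$ of embedded finite configurations: it yields an ambient isotopy of $S$ fixing $\partial S\cup P$ set-wise and carrying $\gamma_{k,t_0}$ to $\gamma_{k,t}$ for all $k\le N$. Hence each $\gamma_{k,t}$ is isotopic as an ideal arc to $\gamma_{k,t_0}$; conjugating such an ambient isotopy by $F$ (which fixes $\partial S$ point-wise and $P$ set-wise) shows $F(\gamma_{k,t})$ is isotopic to $F(\gamma_{k,t_0})$, and likewise for $G$. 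Consequently the truth value of ``$F(\gamma_{k,t})$ is isotopic to $G(\gamma_{k,t})$'' does not depend on $t$, so there is a single index $i$ that is, for every $t$, the least index at which $F(\gamma_{i,t})$ and $G(\gamma_{i,t})$ fail to be isotopic.

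The second step is to establish local constancy of the branching direction. Fix $t_0\in T_+$. For $t$ near $t_0$, use Lemma~\ref{lemma:finalex} on the finite Alexander system $\{\gamma_{1,t},\dots,\gamma_{i-1,t}\}\subset\Gamma_t$ to replace $F,G$ by isotopic homeomorphisms agreeing on $\gamma_{1,t}\cup\cdots\cup\gamma_{i-1,t}$; after the isotopy-extension identification of the previous paragraph these choices can be arranged to depend continuously on $t$. Then the pair of arcs $(F(\gamma_{i,t}),G(\gamma_{i,t}))$, with its common initial point and its ambient-isotopy class in the complement of the continuously varying configuration of the first $i-1$ arcs, varies continuously with $t$. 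The side on which $G(\gamma_{i,t})$ branches off $F(\gamma_{i,t})$ is a discrete invariant of this picture --- this is exactly the well-definedness built into $<_{\Gamma_t}$ --- so it is locally constant in $t$, and $t_0$ has a neighbourhood contained in $T_+$. Hence $T_+$ is open, completing the argument.

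I expect the main obstacle to be the technical bookkeeping in the last two steps: making precise that a continuous deformation of the finitely many relevant arcs promotes, via parametrised isotopy extension, to an ambient isotopy respecting the labelling and the endpoint conditions (2)--(3) of a generalised ideal arc system, and that the auxiliary isotopies of Lemma~\ref{lemma:finalex} normalising $F$ and $G$ on the initial arcs can be chosen to vary continuously with the parameter. Once this continuity of the whole local picture is in place, the conclusion that a $\{\,\mathrm{left},\mathrm{right}\,\}$-valued function on $I$ is locally constant, hence constant, is automatic. If one prefers to sidestep the continuous choice of auxiliary isotopies, an alternative is to prove the self-contained fact that for any homeomorphism $\Phi$ of $S$ that is isotopic to the identity through homeomorphisms fixing $\partial S\cup P$ set-wise one has $<_{\Phi(\Gamma)}\,=\,<_\Gamma$, and then apply this to the time-$1$ map of the extended ambient isotopy taking (a finite initial segment of) $\Gamma$ to $\Delta$, after checking that for the pair $f,g$ only finitely many arcs are relevant.
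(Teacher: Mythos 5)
Your proposal is correct and takes essentially the same approach as the paper: the paper's proof likewise identifies the first index at which the images disagree, transports the comparison along the deformation $H$, and asserts that the branching direction persists for every $t \in I$. Your connectedness/local-constancy packaging and the explicit appeal to parametrised isotopy extension simply make rigorous the steps the paper states in a single line, so no new idea is involved.
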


\begin{proof}
We denote an element of $\Gamma$ by $\gamma_i$ and the corresponding element of $\Delta$ by $\delta_i$. Let $f \in \Mod(S)$ and let $i$ be the smallest index such that $f(\gamma_i) \ne \gamma_i$. Since $\gamma_k$ and $\delta_k$ are continuously deformable for each $1 \le k \le i-1$, it follows that $f(\delta_k)= \delta_k$ for each $1 \le k \le i-1$. Let $H: I \times I_i \to S$ be the continuous deformation with $H(\{0\}\times I_i) = \gamma_i$ and $H(\{1\}\times I_i) = \delta_i$. We reduce $\gamma_i$ and $f(\gamma_i)$ if necessary. If $f(\gamma_i)$ branches off $\gamma_i$ to the left, then after applying $H$, the arc $f(H(\{t\}\times I_i))$ branches off $H(\{t\}\times I_i)$ to the left for each $t\in I$. Hence, $f(\delta_i)$ branches off $\delta_i$ to the left. Interchanging the roles of $\Gamma$ and $\Delta$ implies that the left-orderings $<_\Gamma$ and $<_\Delta$ are the same.
\end{proof}

Lemmas  \ref{lemma:S11}, \ref{lem:pullingtigth} and \ref{lem:condef} lead to the following result.

\begin{proposition}	\label{prop:loose1}
Any two loosely isotopic  generalised ideal arc systems for $S$ determine the same left-ordering on $\Mod(S)$.
\end{proposition}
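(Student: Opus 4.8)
The plan is to use that ``determining the same left-ordering on $\Mod(S)$'' is an equivalence relation on generalised ideal arc systems, so by transitivity it suffices to show that each of the three generating moves of a loose isotopy, applied once, preserves the induced ordering. Move (1), continuous deformation, is exactly Lemma \ref{lem:condef}. Move (2), pulling a loop tight around a puncture: after first applying a continuous deformation as in Remark \ref{deforming to puncture looping arc} (legitimate by the already-handled move (1)), the arc system before the squash plays the role of $\Delta$ and the one after plays the role of $\Gamma$ in Lemma \ref{lem:pullingtigth}, with $\delta_i$ the loop enclosing the puncture and $\gamma_i$ its squashed form; that lemma then applies directly. Thus everything reduces to move (3).

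For move (3), let $C$ be the component of $S \setminus \bigcup_{k=1}^{i-1}\gamma_k$ containing $\gamma_i$, assumed to be a torus with one boundary component or an annulus, and let $\Delta=\{\delta_k\}_{k\ge 1}$ be obtained from $\Gamma$ by reversing the orientation of $\gamma_i$; so $\delta_k=\gamma_k$ as oriented arcs for $k\ne i$ and $\delta_i=\overline{\gamma_i}$. Fix $f,g\in\Mod(S)$ and let $j$ be the smallest index with $f(\gamma_j)$ not isotopic to $g(\gamma_j)$; by Lemma \ref{lemma:finalex} we may assume $f$ and $g$ agree on $\bigcup_{k=1}^{j-1}\gamma_k$. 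If $j\ne i$, then for every $k\le j$ the arcs $\gamma_k$ and $\delta_k$ are equal as oriented arcs except possibly $\gamma_i$ versus $\overline{\gamma_i}$; since $f,g$ are orientation-preserving, $f(\delta_i)=\overline{f(\gamma_i)}$ and $g(\delta_i)=\overline{g(\gamma_i)}$ are isotopic exactly when $f(\gamma_i)$ and $g(\gamma_i)$ are, so $j$ is also the first index of disagreement for $\Delta$, and the branching comparison of $f(\gamma_j)=f(\delta_j)$ with $g(\gamma_j)=g(\delta_j)$ is literally unchanged; hence $f<_\Gamma g \iff f<_\Delta g$.

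There remains the case $j=i$. Then $f$ and $g$ agree on $\bigcup_{k=1}^{i-1}\gamma_k$, hence carry $C$ to the same component $f(C)=g(C)$, which is again a torus with one boundary component or an annulus, and $f(\gamma_i),g(\gamma_i)$ are arcs in $f(C)$ in minimal position. We must show $g(\gamma_i)$ branches off $f(\gamma_i)$ to the left if and only if $\overline{g(\gamma_i)}$ branches off $\overline{f(\gamma_i)}$ to the left. When $f(C)$ is a torus with one boundary component this is precisely Lemma \ref{lemma:S11}, applied inside $f(C)$: by the change of coordinate principle we move $f(\gamma_i)$ to the standard arc of Figure \ref{fig:ias}, and the arc $\gamma_{i+1}$ (if it lies in $f(C)$) plays the role of the second arc. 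When $f(C)$ is an annulus with core curve $c$, we have $g(\gamma_i)=T_c^{\,n}f(\gamma_i)$ for some $n\in\mathbb{Z}$, the sign of $n$ recording the branching direction; orientation reversal gives $\overline{g(\gamma_i)}=T_c^{\,n}\,\overline{f(\gamma_i)}$, and the orientation-preserving involution of $f(C)$ exchanging its two boundary circles conjugates $T_c$ to itself while reversing $f(\gamma_i)$, so the branching direction is preserved. Combining this with the previous paragraph finishes move (3) and the proof.

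I expect the main obstacle to be the $j=i$ case of move (3): one must argue carefully that the comparison of $f(\gamma_i)$ and $g(\gamma_i)$ genuinely localises to the small subsurface $f(C)$, that the relevant relative mapping classes are exactly those handled in Lemma \ref{lemma:S11} (respectively, the powers of a single Dehn twist in the annulus sub-case), and that orientation reversal is realised by an orientation-preserving symmetry of $f(C)$ under which ``branching to the left'' is invariant. The annulus sub-case is not isolated as a lemma in the text and needs the short Dehn-twist symmetry argument sketched above.
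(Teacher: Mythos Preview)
Your proof is correct and follows essentially the same approach as the paper, whose proof consists solely of citing Lemmas \ref{lemma:S11}, \ref{lem:pullingtigth} and \ref{lem:condef} as handling moves (3), (2) and (1), respectively. You have simply been more explicit about the reduction, in particular about localising move (3) to the subsurface $C$ via $f^{-1}g|_C\in\Mod(C)$ and about the annulus sub-case, which the paper absorbs into the final bullet of the proof of Lemma \ref{lemma:S11}.
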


\begin{lemma}\label{lem:disjointcurves}
Let $S$ be a connected oriented surface of finite or infinite-type with non-empty boundary. Let $\gamma$ be an ideal arc on $S$ that starts and ends on a boundary component of $S$ such that $S\setminus \gamma$ has no component homeomorphic to a disk, a punctured disk, or an annulus. Let $\delta$ be an ideal arc on $S$ that is isotopic to $\gamma$, but equipped with opposite orientation. Then, there exist non-isotopic disjoint simple closed curves $c_1$ and $c_2$ on $S$ such that, with respect to the orientation, the first non-trivial intersection of $\gamma$ is with $c_1$ and the first non-trivial intersection of $\delta$ is with $c_2$.
\end{lemma}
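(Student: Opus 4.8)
The plan is to study a regular neighbourhood of $\gamma$ together with the boundary component it meets, reduce to a pair-of-pants picture, and then build $c_1$ and $c_2$ out of essential arcs in the two pieces into which $\gamma$ cuts $S$, with their intersections with $\gamma$ localised near the two endpoints of $\gamma$.

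\textbf{Step 1 (a pair-of-pants neighbourhood).} Let $\partial_0$ be the boundary component of $S$ carrying the endpoints $v_0=\gamma(0)$ and $v_1=\gamma(1)$ of $\gamma$, and let $N$ be a closed regular neighbourhood of $\partial_0\cup\gamma$ in $S$. The graph $\partial_0\cup\gamma$ is connected with Euler characteristic $-1$, so $N$ is a compact surface with $\chi(N)=-1$, hence either a one-holed torus or a pair of pants. If $N$ were a one-holed torus its only boundary circle would be $\partial_0\subseteq\partial S$, forcing $N=S$ by connectedness, and then $S\setminus\gamma$ would be an annulus, contrary to hypothesis. So $N$ is a pair of pants with $\partial N=\partial_0\sqcup a_1\sqcup a_2$, where $a_1,a_2$ are disjoint essential simple closed curves in $\interior(S)$, and $\gamma$ is isotopic in $N$ to the unique essential arc separating $a_1$ from $a_2$; it cuts $N$ into two annuli $Q_1\supseteq a_1$ and $Q_2\supseteq a_2$, one for each arc of $\partial_0\setminus\{v_0,v_1\}$.

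\textbf{Step 2 (localised curves from the complement).} Write $R=S\setminus\gamma$ and let $\beta\subseteq\partial R$ be the union of the two copies of $\gamma$; it contains copies $v_0^{\pm},v_1^{\pm}$ of $v_0,v_1$. Since no component of $R$ is a disk, a punctured disk, or an annulus, the component(s) of $R$ adjacent to $\beta$ admit properly embedded arcs that do not cut off a disk. When $\gamma$ separates $S$, say $R=S^+\sqcup S^-$, choose such an arc $\sigma^+$ in $S^+$ and $\sigma^-$ in $S^-$, each with both endpoints on the $\gamma$-part of its boundary, slid along that boundary circle to lie in a tiny neighbourhood of the copy of $v_0$; gluing back, $c_1:=\sigma^+\cup\sigma^-$ is an essential simple closed curve that meets $\gamma$ transversely and minimally, with all of its intersection points lying within an arbitrarily small arc of $\gamma$ about $v_0$. (When $\gamma$ is non-separating, take instead for $c_1$ a curve realising the Poincar\'e dual of $[\gamma]\in H_1(S,\partial S)$, isotoped to meet $\gamma$ once, near $v_0$.) Repeating the construction near $v_1$ in parts of $R$ disjoint from those used above yields an essential simple closed curve $c_2$ meeting $\gamma$ only near $v_1$; since $c_1$ lives near $v_0$ and $c_2$ near $v_1$, we have $c_1\cap c_2=\varnothing$. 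Finally, in regluing one has freedom in how the arcs on the two sides of $\gamma$ are matched (and, when a side is a one-holed torus, a choice of slope); as each side of $\gamma$ is not a disk, punctured disk, or annulus, this freedom lets us arrange $c_1$ and $c_2$ to lie in different isotopy classes — for instance separating different subsets of the boundary components and punctures, equivalently carrying different $H_1(S)$-classes after a band sum with an essential curve disjoint from $\gamma$.

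\textbf{Step 3 (conclusion) and the main difficulty.} Because every point of $\gamma\cap c_1$ lies near $v_0$ and every point of $\gamma\cap c_2$ near $v_1$, traversing $\gamma$ from $v_0$ the first point of $\gamma\cap(c_1\cup c_2)$ is on $c_1$, while traversing $\delta=\overline{\gamma}$ from $v_1$ the first such point is on $c_2$; together with $c_1\cap c_2=\varnothing$ and $c_1\not\simeq c_2$ this is exactly the assertion. The substantive point, and where the hypothesis is really used, is Step 2: one must build essential simple closed curves whose intersections with $\gamma$ are concentrated near a chosen endpoint while keeping the curves disjoint and non-isotopic, and it is precisely the exclusion of disk, punctured-disk and annulus complementary pieces (together with Step 1 ruling out the one-holed-torus neighbourhood) that guarantees the required essential arcs exist and supplies the room to make $c_1\not\simeq c_2$. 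The separating and non-separating cases for $\gamma$ should be treated separately, and in the separating case one must check piece by piece that an essential, non-boundary-parallel arc with endpoints that can be slid near a prescribed point of $\beta$ is available.
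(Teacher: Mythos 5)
Your overall strategy is in the same spirit as the paper's: both proofs cut $S$ along $\gamma$, split into the separating and non-separating cases, and manufacture $c_1$ and $c_2$ from properly embedded arcs in the complementary pieces whose intersections with $\gamma$ are concentrated near the two endpoints. The paper organises this via the change of coordinates principle, reducing each case to a standard picture in which the two curves are exhibited explicitly (with a further subdivision of the separating case according to whether both sides are a one-holed torus or a twice-punctured disk). Your Step~1 (the pair-of-pants neighbourhood) is harmless but does no real work: for an embedded arc in an oriented surface with both endpoints on one boundary circle, the regular neighbourhood of $\partial_0\cup\gamma$ is always a pair of pants, and in any case Steps~2--3 never use it.

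The genuine gap is that the entire mathematical content of the lemma sits in the claims of Step~2 that you assert rather than verify, and you say so yourself (``one must check piece by piece \dots''). Two concrete points. First, the non-separating case is dispatched in a parenthetical that produces \emph{one} curve dual to $[\gamma]$; the lemma needs \emph{two} disjoint, \emph{non-isotopic} curves, each meeting $\gamma$ essentially, one first met from $v_0$ and the other from $v_1$. In the smallest configuration permitted by the hypothesis --- $\gamma$ non-separating with $S\setminus\gamma$ a pair of pants, e.g.\ $S=S_{1,0}^2$ --- any two disjoint properly embedded arcs joining the two boundary circles of $S\setminus\gamma$ that carry the copies of $\gamma$ are parallel in the cut surface (there is a unique isotopy class of such arcs in a pair of pants), so the recipe ``choose disjoint non-parallel essential arcs in the pieces'' fails literally; the non-isotopy of $c_1$ and $c_2$ can only come from how the parallelism rectangle meets the regluing, which is exactly the verification that is missing. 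Second, in the separating case the statement ``this freedom lets us arrange $c_1\not\simeq c_2$'' is the crux and is not proved; it genuinely requires the case analysis the paper carries out (e.g.\ when both sides are twice-punctured disks one must choose arcs cutting off \emph{different} punctures at the two endpoints, and when a side is a one-holed torus one must choose arcs of different slopes and then check disjointness and minimal position). A smaller omission: the case where the two endpoints of $\gamma$ lie on different components of $\partial S$, which the paper treats first by taking $c_1,c_2$ parallel to the two boundary components, is not addressed at all. As written the proposal is a plausible plan whose decisive steps are postulated, not a proof.
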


\begin{proof}
If the start and the end point of $\gamma$ lie on different components of $\partial S$, then we choose $c_1$ and $c_2$ to be simple closed curves homotopic to the boundary component containing the start and the end point, respectively. Now, we assume that the starting and the end point of $\gamma$ (and of $\delta$) lie on the same component of $\partial S$. If $\gamma$ is non-separating, then by the change of coordinate principle, there exists $h \in \Homeo^+(S, \partial S)$ which takes the arc $\gamma$ to an arc $a$ as shown in Figure~\ref{fig:nonsep}. We can choose non-isotopic disjoint simple closed curves $d_1$ and $d_2$ satisfying the assertion of the lemma for the arc $a$ (see Figure~\ref{fig:nonsep}). Then $c_1 := h^{-1}(d_1)$ and $c_2:= h^{-1}(d_2)$ are the desired simple closed curves for $\gamma$ and $\delta$.

	\begin{figure}[H]
		\labellist
		\tiny
		\pinlabel $a$ at 50, 58
		\pinlabel $d_1$ at 110, 150
		\pinlabel $d_2$ at 110, 50
		\endlabellist
		\includegraphics[width=2in]{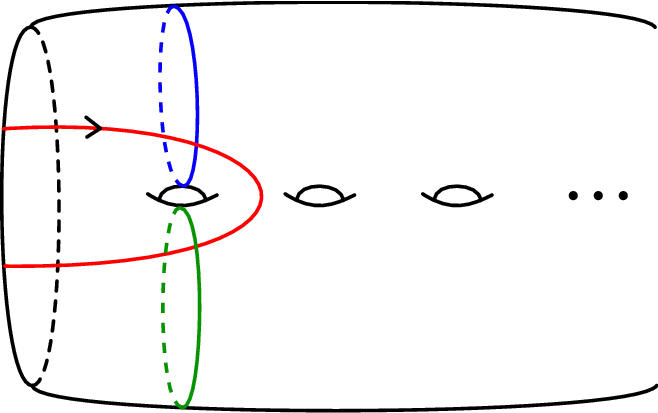}
		\caption{Curves $d_1$ and $d_2$ when the arc $a$ is non-separating.}
\label{fig:nonsep}
	\end{figure}
If $\gamma$ is separating, then we write $S\setminus \gamma = S'\cup S''$. We consider two cases as follows:
\par

Case 1: Suppose that each of $S'$ and $S''$ is either a torus with one boundary component or a disk with two punctures. In this case, the simple closed curves $c_1$ and $c_2$ as shown in Figure~\ref{fig:sep11} are the desired non-isotopic disjoint curves.

\begin{figure}[H] 
	\labellist
	\tiny
	\pinlabel $\gamma$ at 30, 72
	\pinlabel $c_1$ at 70, 58
	\pinlabel $c_2$ at 103, 50
	\pinlabel $\gamma$ at 190, 73
	\pinlabel $c_1$ at 230, 56
	\pinlabel $c_2$ at 263, 58
	\pinlabel $\gamma$ at 350, 72
	\pinlabel $c_1$ at 420, 58
	\pinlabel $c_2$ at 422, 30
	\endlabellist
	\includegraphics[width=6in]{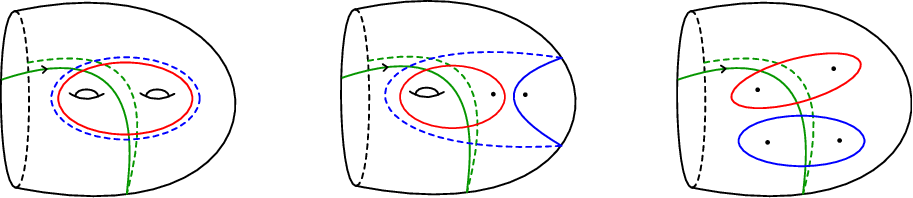}
	\caption{The curves $c_1$ is in red color, $c_2$ is in blue color and the arc $\gamma$ is in green color.}
	\label{fig:sep11}
\end{figure}
\par
Case 2: Suppose that at least one of $S'$ or $S''$ is neither a disk with two punctures nor a torus with one boundary component. Thus, at least one of them, say $S'$, is one of the following:
\begin{itemize}
\item A disk with at least three puctures.
\item A torus with at least two boundary components.
\item A torus with at one boundary component and at least one puncture.
\item A surface with genus at least two and at least one boundary component.
\end{itemize}
In this case, there exist non-isotopic disjoint simple closed curves $c_1$ and $c_2$ with desired properties, for example as shown in Figure~\ref{fig:sep12}.
	\begin{figure}[H] 
	\labellist
	\tiny
	\pinlabel $S'$ at 50, 92
	\pinlabel $S'$ at 320, 92
	\pinlabel $\gamma$ at 30, 80
	\pinlabel $c_1$ at 80, 66
	\pinlabel $c_2$ at 110, 48
	\pinlabel $\gamma$ at 265, 80
	\pinlabel $c_1$ at 320, 66
	\pinlabel $c_2$ at 360, 48
	\endlabellist
	\includegraphics[width=5in]{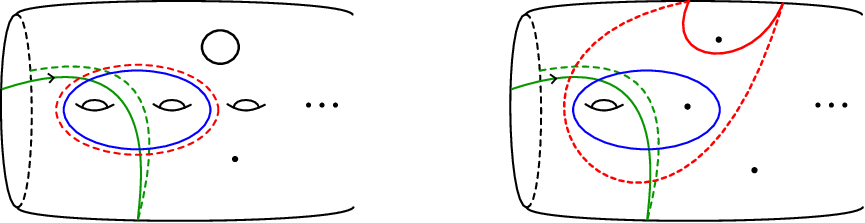}
	\caption{The curves $c_1$ is in blue color, $c_2$ is in red color and the arc $\gamma$ is in green color.}
	\label{fig:sep12}
\end{figure}
 This complete the proof of the lemma.
\end{proof}

\begin{theorem}\label{thm:loose2}
Let $S$ be a connected oriented surface with non-empty boundary. Two generalised ideal arc systems for $S$ determine the same left-ordering on $\Mod(S)$ if and only if they are loosely isotopic.
\end{theorem}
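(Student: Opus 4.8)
The ``if'' direction is Proposition~\ref{prop:loose1}. For the ``only if'' direction I would argue the contrapositive: given generalised ideal arc systems $\Gamma=\{\gamma_k\}_{k\ge1}$ and $\Delta=\{\delta_k\}_{k\ge1}$ that are \emph{not} loosely isotopic, I will produce $h\in\Mod(S)$ with $\id<_\Gamma h$ and $h<_\Delta\id$, which is impossible once $<_\Gamma=<_\Delta$. By Proposition~\ref{prop:loose1} I may replace $\Gamma$ and $\Delta$ by loosely isotopic systems at any stage, and by Lemma~\ref{lemma:finalex} together with Propositions~\ref{prop:tight} and~\ref{prop:reduction} I may keep every arc system occurring below in minimal position and pairwise reduced. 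Let $i$ be minimal such that, after continuous deformations arranging $\gamma_k=\delta_k$ for all $k<i$, no loose isotopy fixing $\gamma_1,\dots,\gamma_{i-1}$ carries $\gamma_i$ onto $\delta_i$; such an $i$ exists since $\Gamma$ and $\Delta$ are not loosely isotopic. Put $Y=S\setminus\bigcup_{k<i}\gamma_k$ and let $C_i$ be the component of $Y$ containing $\gamma_i$ (and $\delta_i$, which starts on the same arcs). Any mapping class supported in the interior of $C_i$ fixes $\gamma_1,\dots,\gamma_{i-1}$, so its $<_\Gamma$-sign and its $<_\Delta$-sign are read off from $\gamma_i$ and from $\delta_i$ respectively; it therefore suffices to build such an $h$ that moves $\gamma_i$ branching off to the left and moves $\delta_i$ branching off to the right.

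The case division is governed by whether Lemma~\ref{lem:disjointcurves} applies to $\gamma_i$ inside $C_i$. In the \emph{generic} case, cutting $C_i$ along $\gamma_i$ leaves no disk, once-punctured disk, or annulus component, and Lemma~\ref{lem:disjointcurves} yields non-isotopic disjoint simple closed curves $c_1,c_2\subset C_i$ such that, along its own orientation, the first non-trivial crossing of $\gamma_i$ is with $c_1$ and that of $\delta_i$ is with $c_2$. Take $h=T_{c_1}^{N_1}T_{c_2}^{N_2}$ with $N_1,N_2$ nonzero of large absolute value. Because $c_1$ and $c_2$ are disjoint, near the start of $\gamma_i$ the map $h$ agrees with $T_{c_1}^{N_1}$, so $h(\gamma_i)$ branches off $\gamma_i$ at its $c_1$-crossing in a direction determined by the sign of $N_1$; likewise $h(\delta_i)$ branches off $\delta_i$ at its $c_2$-crossing in a direction determined by the sign of $N_2$. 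Choosing the two signs appropriately makes $h(\gamma_i)$ branch left and $h(\delta_i)$ branch right, so $\id<_\Gamma h$ and $h<_\Delta\id$, the desired contradiction. If instead $\gamma_i$ and $\delta_i$ are non-isotopic as unoriented arcs while $C_i$ is still non-degenerate, I would run the same scheme after first producing, from the fact that these two essential arcs fill no disk, punctured disk, or annulus in $C_i$, simple closed curves in $C_i$ disjoint from $\gamma_1,\dots,\gamma_{i-1}$ whose high twist powers push $\gamma_i$ and $\delta_i$ to opposite sides.

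The degenerate cases, where Lemma~\ref{lem:disjointcurves} does not apply, are treated individually. If $\gamma_i$ is inessential in $C_i$ it is not an arc of a generalised ideal arc system at all. If cutting $C_i$ along $\gamma_i$ exposes a once-punctured disk, then $\gamma_i$ is an almost puncture looping arc and, by Remark~\ref{deforming to puncture looping arc} (moves (1) and (2)) and Lemma~\ref{lem:pullingtigth}, it is loosely isotopic rel $\gamma_1,\dots,\gamma_{i-1}$ to the arc ending at the enclosed puncture; after the analogous reduction of $\delta_i$, the arcs $\gamma_i$ and $\delta_i$ are loosely isotopic rel the initial arcs, against the choice of $i$. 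If $C_i$ is an annulus or a once-holed torus and $\gamma_i$ coincides with $\delta_i$ as an unoriented arc, move~(3) (together with Lemma~\ref{lemma:S11} in the torus case) again gives loose isotopy rel the initial arcs, contradicting the choice of $i$; and the single remaining subcase, a once-holed torus carrying $\gamma_i$ and $\delta_i$ of distinct slopes, is settled by an explicit computation in $\Mod(S_1^1)$ modelled on the proof of Lemma~\ref{lemma:S11}, exhibiting the required $h$.

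The part I expect to be the main obstacle is precisely the verification running through all of this: that the $h$ written down has $\gamma_i$, respectively $\delta_i$, as the \emph{first} arc of $\Gamma$, respectively $\Delta$, that it moves, and that it branches these two arcs to \emph{opposite} sides. The difficulty is that reversing an arc's orientation interchanges ``left'' and ``right'', so a single Dehn twist typically branches $\gamma_i$ and its reverse $\delta_i$ to the \emph{same} side; this is exactly why Lemma~\ref{lem:disjointcurves} is formulated with two disjoint curves recording the first crossings of $\gamma_i$ and of $\delta_i$ separately, which is what lets the two twist exponents be chosen independently, and why the degenerate shapes of $C_i$ — annulus, once-punctured disk, once-holed torus — in which no such disjoint pair of curves exists, must be peeled off first and shown already to force loose isotopy, apart from the once-holed-torus-with-distinct-slopes subcase, which is handled by hand.
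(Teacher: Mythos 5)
Your proposal is correct and is essentially the paper's own argument recast as a contrapositive: the same reduction to the first index where the arcs differ, the same use of Lemma~\ref{lem:disjointcurves} (two disjoint curves recording the first crossings of $\gamma_i$ and $\delta_i$) with independently chosen twist powers for the orientation-reversed case, a separate pair of curves $\tau,\tau'$ with complementary intersection patterns for the genuinely non-isotopic case, and the same peeling off of the punctured-disk, annulus, and once-holed-torus degeneracies via Lemmas~\ref{lemma:S11} and~\ref{lem:pullingtigth}. The only point your phrasing ``such an $i$ exists'' quietly absorbs is the assembly of the stage-by-stage loose isotopies into a single one, which the paper makes explicit by building the isotopies $H_n$ compatibly and taking their union.
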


\begin{proof}
In view of Proposition~\ref{prop:loose1}, it remains to prove that if $\Gamma$ and $\Delta$ are two generalised ideal arc systems for $S$ which determine the same left-ordering on $\Mod(S)$, then they are loosely isotopic. In view of Remark \ref{deforming to puncture looping arc}, upto loose isotopy, we can assume that none of $\Gamma$ and $\Delta$ have any almost puncture looping arc. Applying continuous deformation, if needed, we can further assume that the starting point of each arc in  $\Gamma$ as well as $\Delta$ is on $\partial S$ and the end point of each arc is on $\partial S \cup P$.  Furthermore, we can assume, up to continuous deformation, that $\Gamma$ and $\Delta$ are reduced with respect to each other. Indeed, if there is a bigon enclosed by a pair of segments of arcs or a triangle whose one side is a segment of the boundary and the other two sides are segments of a pair of arcs from $\Gamma \cup \Delta$, then we can remove those bigons and triangles. Here, by removing a triangle, we mean sliding one segment of a pair of arcs from $\Gamma \cup \Delta$ onto the other along a segment of the boundary.
\par 

We claim that, for each $n \ge 2$, there exists an isotopy $H_n$  (which need not fix $\partial S$ point-wise) between $\cup_{j=1}^n \gamma_j$ and $\cup_{j=1}^n \delta_j$ such that $H_n$ restricted to the first $n-1$ arcs is the isotopy $H_{n-1}$. We  construct such an isotopy inductively.   By our assumption on $\Gamma$ and $\Delta$, the end points of the arcs $\gamma_1$ and  $\delta_1$  lie either on $\partial S$ or in $P$. If $\gamma_1$ and $\delta_1$ are not isotopic, then there are two possible cases:
\par	

Case 1: Suppose that $\gamma_1$ and $\delta_1$ are isotopic with orientation of one of them reversed.  In this case, the arcs $\gamma_1$ and $\delta_1$ have end points on $\partial S$. By Lemma~\ref{lem:disjointcurves}, there exist disjoint simple closed curves $c_1$ and $c_2$ such that the first non-trivial intersection of $\gamma_1$ is with $c_1$ and that of $\delta_1$ is with $c_2$. Considering the arcs $T_{c_2}^{-1} T_{c_1}(\gamma_1)$ and $T_{c_2}^{-1} T_{c_1}(\delta_1)$ and reducing them with respect to $\gamma_1$ and $\delta_1$ if necessary, we see that $\id <_\Gamma T_{c_2}^{-1} T_{c_1}$ and $\T_{c_2}^{-1} T_{c_1} <_\Delta \id$. \par	

Case 2: Suppose that $\gamma_1$ and $\delta_1$ are not isotopic even with orientation of any of them reversed. In this case, there exist essential simple closed curves $\tau, \tau'$ such that $$i(\tau, \gamma_1) = 0,~~ i(\tau, \delta_1) \neq 0,~~ i(\tau', \gamma_1) \neq 0,~~ \text{ and }~ i(\tau', \delta_1) = 0.$$
Indeed, if $\gamma_1$ and $\delta_1$ intersect, then choose $\tau$ to be one of the boundary curves of a regular neighbourhood $\partial S \cup \gamma_1$ that intersects $\delta_1$. Similarly, choose $\tau'$  to be one of the boundary curves of a regular neighbourhood of $\partial S \cup \delta_1$ that intersects $\gamma_1$. If $\gamma_1$ and $\delta_1$ do not intersect, then choose $\tau$ to be an essential simple closed curve in $S\setminus \gamma_1$ that intersects $\delta_1$, and  choose $\tau'$ to be an essential simple closed curve in $S\setminus \delta_1$ that intersects $\gamma_1$. Considering the arcs $T_{\tau'}^{-1} T_{\tau}(\gamma_1)$ and $T_{\tau'}^{-1} T_{\tau}(\delta_1)$ and reducing them with respect to $\gamma_1$ and $\delta_1$ if necessary, we obtain $T_{\tau'}^{-1} T_{\tau} <_\Gamma \id$ and $\id <_\Delta T_{\tau'}^{-1} T_{\tau}$.
\par

Thus, both the cases contradict the hypothesis that $\Gamma$ and $\Delta$ induce the same left-ordering. This shows that $\gamma_1$ and $\delta_1$ must be isotopic. We fix an isotopy, say $H_1$, between $\gamma_1$ and $\delta_1$.
\par

We denote the image of $H_1$ by $H_1$ itself. By our assumption on $\Gamma$ and $\Delta$, the end points of both the arcs $\gamma_2$ and $\delta_2$ lie either on $\partial S$ or in $P$. For any $j\geq 2$, if the arcs $\gamma_j$ or $\delta_j$ intersected $H_1$, then the intersection creates a bigon or a triangle containing segments of the arcs $\gamma_1$ or $\delta_1$, which is impossible due to our assumption that $\Gamma \cup \Delta$ is reduced. Hence, $H_1$  does not intersect the arcs $\gamma_j$ and $\delta_j$ for each $j\geq 2$. Now, we consider the surface $S \setminus H_1$.  If $\gamma_2$ and $\delta_2$ lie in different connected components of $S \setminus H_1$, then $\gamma_2$  and $\delta_2$ are not isotopic on $S$. Using  arguments similar to the Case 2 above, we get a contradiction to fact that $\Gamma$ and $\Delta$ induce the same left-ordering. Hence, $\gamma_2$ and $\delta_2$ lie in  the same connected component, say $C_1$, of $S \setminus H_1$. By repeating the procedure that we applied for  $\gamma_1$ and $\delta_1$ on $S$,  we obtain an isotopy between $\gamma_2$ and $\delta_2$ on $C_1$. We extend this isotopy to an isotopy $H_2$ between $\gamma_1 \cup \gamma_2$ and $\delta_1 \cup \delta_2$, which restricts to the isotopy $H_1$ between $\gamma_1$ and $\delta_1$. We note that, if $C_1$ is a torus with one boundary component or an annulus, then the situation of Case 1  (that is, $\gamma_2$ and $\delta_2$ are isotopic with orientation of one of them reversed) induces the same left-ordering, and hence they are loosely isotopic due to Lemma \ref{lemma:S11}.
\par
	
For $n \ge 2$, let us assume that there exists an isotopy $H_{n-1}$ between $\cup_{j=1}^{n-1} \gamma_j$ and $\cup_{j=1}^{n-1} \delta_j$. Again, by our assumption on $\Gamma$ and $\Delta$, the arcs $\gamma_n$ and $\delta_n$ are such that their end point are either on $\partial S$ or in $P$.	Similar to the case $n=2$, the isotopy $H_{n-1}$ does not intersect the arcs $\gamma_j$ and $\delta_j$ for each $j\geq n$, and both $\gamma_n$ and $\delta_n$ lie in the same connected component $C_{n-1}$ of $S \setminus H_{n-1}$.   By repeating the procedure that we applied for  $\gamma_2$ and $\delta_2$ on $S$,  we obtain an isotopy between $\gamma_n$ and $\delta_n$  on  $C_{n-1}$. We then  extend this isotopy to an isotopy $H_n$ between $\cup_{j=1}^{n} \gamma_j$ and $\cup_{j=1}^{n} \delta_j$, which restricts to the isotopy $H_{n-1}$ between  $\cup_{j=1}^{n-1} \gamma_j$ and $\cup_{j=1}^{n-1} \delta_j$. This proves our claim. 
\par 

We define $$H: \left(\sqcup_{k \ge 1} I_k \right) \times I \to S$$ such that its restriction to $(\sqcup_{k =1}^n I_k) \times I$ is the map $H_n$. Since $H_m$ restricted to $(\sqcup_{k =1}^n I_k) \times I$  is $H_n$ for each $m >n$, such an $H$ is well-defined and uniquely determined by $\{H_n\}_{n \ge 1}$. With this the proof of the theorem is complete.
\end{proof}

\begin{definition}
Two left-orderings $<$ and $<'$ on a group $G$ are said to be {\it conjugate} if there exists $z \in G$ such that $x <' y$ iff $ xz < yz$. In this case, we denote $<'$ by $<^z$.
    \end{definition}

\begin{remark}\label{rmk:conjgateordering}
If $<_\Gamma$ is the left-ordering on $\Mod(S)$ induced by a generalised ideal arc system $\Gamma$ for $S$, then the conjugate ordering $<^f_\Gamma$ is induced by the ideal arc system $f(\Gamma)$. In other words, the left-ordering $<^f_\Gamma$ is same as the left-ordering $<_{f(\Gamma)}$.
\end{remark} 

\begin{remark}\label{rmk:conjgateorderings}
The mapping class group $\Mod(S)$ admits a left-action on the set of generalised ideal arc systems for $S$ given by $(f, \Gamma) \mapsto f(\Gamma)$. In fact, this action is loose isotopy invariant, and hence induces a left-action on the set of loose isotopy classes of generalised ideal arc systems for $S$.
\end{remark} 

\begin{proposition}\label{conj left orderings and isotopy classes}
Two generalised ideal arc systems $\Gamma$ and $\Delta$ for $S$ induce conjugate left-orderings on $\Mod(S)$ if and only if $\Gamma$ and $\Delta$ are in the same orbit under the action of $\Mod(S)$ on the set of loose isotopy classes of generalised ideal arc systems.
\end{proposition}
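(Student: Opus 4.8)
The plan is to combine Theorem~\ref{thm:loose2} with the description of conjugation in terms of the $\Mod(S)$-action, so that the statement becomes a formal consequence of results already established. Throughout I write $<_\Gamma$ for the left-ordering induced by a generalised ideal arc system $\Gamma$, as in Proposition~\ref{gen ideal arc implies left-orderability}, and I use the $\Mod(S)$-action $(f,\Gamma)\mapsto f(\Gamma)$ on generalised ideal arc systems together with its well-definedness on loose isotopy classes (Remark~\ref{rmk:conjgateorderings}) and the identity $<^f_\Gamma \;=\; <_{f(\Gamma)}$ of Remark~\ref{rmk:conjgateordering}.

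First I would prove the ``if'' direction. Suppose $\Gamma$ and $\Delta$ lie in the same orbit, so that $\Delta$ is loosely isotopic to $f(\Gamma)$ for some $f \in \Mod(S)$. By Proposition~\ref{prop:loose1}, loosely isotopic generalised ideal arc systems induce the same left-ordering, so $<_\Delta \;=\; <_{f(\Gamma)}$. By Remark~\ref{rmk:conjgateordering}, $<_{f(\Gamma)} \;=\; <^f_\Gamma$, which by definition is the conjugate ordering $<_\Gamma^f$. Hence $<_\Delta$ is conjugate to $<_\Gamma$, as desired.

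Next I would prove the ``only if'' direction, which is where the substantive input lies. Assume $<_\Gamma$ and $<_\Delta$ are conjugate, say $<_\Delta \;=\; <_\Gamma^z$ for some $z \in \Mod(S)$. By Remark~\ref{rmk:conjgateordering}, $<_\Gamma^z \;=\; <_{z(\Gamma)}$, so $<_\Delta \;=\; <_{z(\Gamma)}$. Now $z(\Gamma)$ is again a generalised ideal arc system for $S$ (this is part of why the action is well-defined), and it induces the same left-ordering on $\Mod(S)$ as $\Delta$. Applying Theorem~\ref{thm:loose2} to the pair $\Delta$ and $z(\Gamma)$, we conclude that $\Delta$ and $z(\Gamma)$ are loosely isotopic. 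Therefore $\Delta$ and $\Gamma$ represent the same orbit under the $\Mod(S)$-action on loose isotopy classes of generalised ideal arc systems. This completes the argument.

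The routine points to be careful about are purely bookkeeping: checking that $z(\Gamma)$ really is a generalised ideal arc system (so that Theorem~\ref{thm:loose2} applies to it) and tracking the direction of conjugation consistently with the paper's convention $x <^z y \iff xz < yz$, so that the identification $<_\Gamma^z = <_{z(\Gamma)}$ of Remark~\ref{rmk:conjgateordering} is invoked with the correct element. The main obstacle, such as it is, is not in this proof at all but has already been overcome in Theorem~\ref{thm:loose2}; here the work is only to see that conjugacy of orderings translates, via Remark~\ref{rmk:conjgateordering}, exactly into the relation ``induces the same ordering as some translate,'' after which Theorem~\ref{thm:loose2} does the rest.
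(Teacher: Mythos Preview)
Your proof is correct and follows essentially the same approach as the paper: both arguments use Remark~\ref{rmk:conjgateordering} to identify conjugation with translation by the $\Mod(S)$-action, then invoke Proposition~\ref{prop:loose1} for one direction and Theorem~\ref{thm:loose2} for the other. The only cosmetic difference is that the paper translates $\Delta$ by an element of $\Mod(S)$ whereas you translate $\Gamma$, which is immaterial since the statement is symmetric in $\Gamma$ and $\Delta$.
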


\begin{proof}
Since the left-orderings induced by $\Gamma$ and $\Delta$ are conjugate, there exists an $f \in \Mod(S)$ such that $<_\Gamma$ is the same as $<^f_\Delta$. By Remark \ref{rmk:conjgateordering},  $<^f_\Delta$ is the same as $<_{f(\Delta)}$. By Theorem \ref{thm:loose2}, the generalised ideal arc systems $\Gamma$ and $f(\Delta)$ are loosely isotopic. Therefore, $\Gamma$ and $\Delta$ are in the same orbit under the action of $\Mod(S)$.
\par 
Conversely, let $g \in \Mod(S)$ such that $\Gamma$ and $g(\Delta)$ are loosely isotopic. By Proposition~\ref{prop:loose1}, the left-orderings induced by $\Gamma$ and $g(\Delta)$ on $\Mod(S)$ are the same. The assertion now follows from Remark~\ref{rmk:conjgateordering}.
\end{proof}

\begin{proposition}\label{space of orderings}
Let $S$ be a connected oriented infinite genus surface with non-empty boundary. Then the space of conjugacy classes of left-orderings on $\Mod(S)$ is infinite.
\end{proposition}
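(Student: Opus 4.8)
The plan is to invoke Proposition~\ref{conj left orderings and isotopy classes}: since the left-orderings induced by two generalised ideal arc systems are conjugate exactly when the two systems lie in the same $\Mod(S)$-orbit of loose isotopy classes, it suffices to exhibit infinitely many generalised ideal arc systems for $S$ that are pairwise inequivalent under the combined relation generated by loose isotopy and the $\Mod(S)$-action. I would distinguish them by a single invariant: for a generalised ideal arc system $\Gamma=\{\gamma_k\}_{k\ge 1}$, record the homeomorphism type of the surface $S\setminus\gamma_1$ obtained by cutting $S$ along its first arc (equivalently, the unordered collection of homeomorphism types of the components of $S\setminus\gamma_1$). Note that the labelling, hence the notion of ``first arc'', is preserved both by the $\Mod(S)$-action $(f,\Gamma)\mapsto f(\Gamma)$ and by each of the three moves generating loose isotopy, so this invariant is at least well defined set-theoretically.

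First I would check that this cutting-type invariant is a genuine loose isotopy invariant and is $\Mod(S)$-invariant. The $\Mod(S)$-invariance is immediate, since any $f\in\Homeo^{+}(S,\partial S)$ restricts to a homeomorphism $S\setminus\gamma_1\to S\setminus f(\gamma_1)$ and $f(\Gamma)$ has first arc $f(\gamma_1)$. For loose isotopy, the point is that each of the three generating moves leaves the isotopy class of the first arc $\gamma_1$ unchanged, provided $\gamma_1$ is not an almost puncture looping arc in the sense of Remark~\ref{deforming to puncture looping arc}: a continuous deformation through generalised ideal arc systems moves $\gamma_1$ by an isotopy of embedded arcs; the orientation-reversal move applies to $\gamma_i$ only when the component of $S\setminus\cup_{k=1}^{i-1}\gamma_k$ carrying it is a one-holed torus or an annulus, which for $i=1$ would force $S$ itself to be one of those surfaces; the puncture-pulling move applies to $\gamma_i$ only when a complementary component of $\gamma_i$ inside its ambient component is a once-punctured disk, which for $i=1$ is exactly the almost puncture looping condition; and all moves indexed $i\ge 2$ fix $\gamma_1$. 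Being almost puncture looping is a property of the isotopy class of $\gamma_1$ (the ambient component being all of $S$), so if $\gamma_1$ is not almost puncture looping to begin with, this persists under every move; hence the homeomorphism type of $S\setminus\gamma_1$ is preserved throughout any loose isotopy of such a system.

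Next I would construct the family. Since $S$ has infinite genus and non-empty boundary, for each integer $g\ge 1$ there is an essential arc $\gamma_1^{(g)}$ with both endpoints on a fixed boundary component of $S$ which separates $S$ into a compact subsurface homeomorphic to $S_{g,0}^{1}$ and an infinite-genus remainder: enclosing $g$ of the handles of $S$ by a separating simple closed curve gives a compact one-holed genus-$g$ subsurface $\Sigma_0$; join $\Sigma_0$ to $\partial S$ by an embedded arc meeting $\interior(\Sigma_0)$ nowhere, and let $\gamma_1^{(g)}$ be the part of the boundary of a regular neighbourhood of $\Sigma_0$ together with that arc which does not lie on $\partial S$. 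Then $\gamma_1^{(g)}$ is essential, and since $g\ge 1$ neither component of $S\setminus\gamma_1^{(g)}$ is a punctured disk, so $\gamma_1^{(g)}$ is not almost puncture looping. I would then extend $\gamma_1^{(g)}$ to a generalised ideal arc system $\Gamma^{(g)}=\{\gamma_k^{(g)}\}_{k\ge 1}$ having $\gamma_1^{(g)}$ as its first arc, by rerunning the construction of Section~\ref{Orderability of big MCG} with an exhaustion of $S$ by finite-type subsurfaces adapted to $\gamma_1^{(g)}$ (so that $\gamma_1^{(g)}$ occurs among the arcs of the curve diagram of the first piece) and then filling in the remaining arcs exactly as there; by Proposition~\ref{gen ideal arc implies left-orderability} each $\Gamma^{(g)}$ induces a left-ordering $<_{\Gamma^{(g)}}$ on $\Mod(S)$.

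Finally, $S\setminus\gamma_1^{(g)}$ has a component homeomorphic to the compact surface $S_{g,0}^{1}$, and these surfaces are pairwise non-homeomorphic (and none is homeomorphic to the infinite-type complementary piece), so the invariant above separates $\Gamma^{(g)}$ from $\Gamma^{(g')}$ whenever $g\ne g'$. Hence the systems $\Gamma^{(g)}$, $g\ge 1$, lie in pairwise distinct $\Mod(S)$-orbits of loose isotopy classes, and by Proposition~\ref{conj left orderings and isotopy classes} the orderings $<_{\Gamma^{(g)}}$ are pairwise non-conjugate, so the space of conjugacy classes of left-orderings on $\Mod(S)$ is infinite. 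The step I expect to require the most care is the verification of loose isotopy invariance of the cutting-type invariant, specifically confirming that the puncture-pulling move can never touch the first arc, which is precisely why we insist $g\ge 1$; a secondary, routine point is checking that an arbitrary essential separating arc can indeed be completed to a full generalised ideal arc system by running the construction of Section~\ref{Orderability of big MCG} with a prescribed first arc.
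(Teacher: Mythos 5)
Your proposal is correct and takes essentially the same route as the paper: both reduce via Proposition~\ref{conj left orderings and isotopy classes} and then exhibit infinitely many generalised ideal arc systems whose first arcs are separating with pairwise non-homeomorphic finite-type complementary components, so that the systems lie in distinct $\Mod(S)$-orbits of loose isotopy classes. The only difference is that you carefully verify the loose-isotopy and $\Mod(S)$-invariance of the cutting-type invariant of the first arc, a point the paper dismisses as ``easy to see.''
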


\begin{proof}
In view of Proposition \ref{conj left orderings and isotopy classes}, it suffices to prove that the quotient of the set of loose isotopy classes of generalised ideal arc systems under the action of $\Mod(S)$ is an infinite-set. Note that the surface $S$ admits a countable infinite number of separating ideal arcs $\{\eta_k\}_{k \ge 1}$ such that the finite-type component of $S \setminus \eta_k$ is distinct for each $k \ge 1$ (see Figure \ref{figLoch Ness}). For each $k \ge 1$, let $\Gamma_k$ be a generalised ideal arc system such that its first ideal arc is $\eta_k$. It is easy to see that for each $i\neq j$, the generalised ideal arc systems $\Gamma_i$ and $\Gamma_j$ lie in different orbits under the action of $\Mod(S)$.
\end{proof}
\begin{figure}[H]
	\includegraphics[width=2.4in]{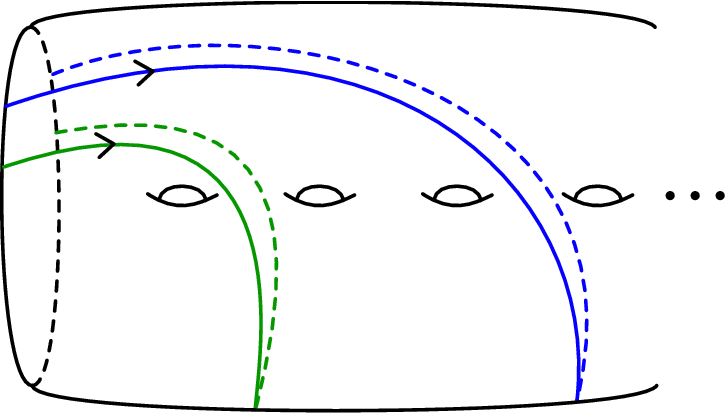}
	\caption{Countable infinite number of non-isotopic separating ideal arcs on the Loch Ness monster surface with one boundary component.}
		\label{figLoch Ness}
\end{figure}

\medskip

\section{Comparison of topologies on the big mapping class group}\label{section comparison of topologies}
In this section, we compare the quotient topology on the big mapping class group with the order topology induced by a left-ordering.

 \begin{definition}
    Let $(G, <)$ be a left-ordered group. The {\it order topology} on $G$ induced by $<$ is the topology for which the collection $\{L_g, R_g \mid g \in G \}$ forms a  sub-basis,  where $L_g=\{x\in G\mid x < g\}$ and $R_g=\{x\in G\mid  g< x\}$.
    \end{definition}
    
We note that, if the order topology on $G$ induced by the left-ordering $<$ is discrete, then the positive cone of $<$ has the  least element.

\begin{proposition}
Let $S$ be a connected oriented infinite-type surface with non-empty boundary and $\Gamma=\{\gamma_k \}_{k \ge 1}$ a (generalised) ideal arc system for $S$. Then the order topology on $\Mod(S)$ induced by the left-ordering $<_\Gamma$ is non-discrete.
\end{proposition}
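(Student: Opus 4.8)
The plan is to invoke the observation recorded just before the statement: if the order topology induced by a left-ordering $<$ on a group is discrete, then its positive cone has a least element. Hence it suffices to show that the positive cone $P = \{\, f \in \Mod(S) \mid \id <_\Gamma f \,\}$ of $<_\Gamma$ has no least element. Concretely, I would start with an arbitrary $h \in P$ and produce some $g \in \Mod(S)$ with $\id <_\Gamma g <_\Gamma h$.

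To build $g$, recall how $<_\Gamma$ compares elements: one looks at the smallest index $l$ for which $h(\gamma_l)$ is not isotopic to $\gamma_l$ — such an $l$ exists because $\Gamma$ is a stable Alexander system and $h \neq \id$ — and $\id <_\Gamma h$ means precisely that $h(\gamma_l)$ branches off $\gamma_l$ to the left, while $h$ fixes the isotopy classes of $\gamma_1, \dots, \gamma_{l-1}$. Now choose an essential, non-peripheral simple closed curve $c$ on $S$ disjoint from $\gamma_1 \cup \cdots \cup \gamma_l$; this is possible because $S$ is of infinite type, so after cutting along the finitely many arcs $\gamma_1, \dots, \gamma_l$ at least one component is still of infinite type and therefore contains such a curve. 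The Dehn twist $T_c$ is a nontrivial element of $\Mod(S)$ that can be realised by a homeomorphism supported in a neighbourhood of $c$ disjoint from $\gamma_1, \dots, \gamma_l$; hence $T_c$, and likewise $T_c^{-1}$, fixes each of $\gamma_1, \dots, \gamma_l$ pointwise. By trichotomy, either $\id <_\Gamma T_c$ or $T_c <_\Gamma \id$, and in the latter case left-invariance of $<_\Gamma$ gives $\id <_\Gamma T_c^{-1}$. Replacing $T_c$ by $T_c^{-1}$ if needed, we obtain $g := T_c^{\pm 1} \in P$ which still fixes $\gamma_1, \dots, \gamma_l$.

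Finally I would verify $g <_\Gamma h$. Since $g$ fixes the isotopy classes of $\gamma_1, \dots, \gamma_l$ whereas $h$ fixes only $\gamma_1, \dots, \gamma_{l-1}$ and moves $\gamma_l$, the smallest index at which $g(\gamma_k)$ and $h(\gamma_k)$ are non-isotopic is exactly $l$; and at that index $h(\gamma_l)$ branches off $\gamma_l$, hence off $g(\gamma_l)$, to the left — this is exactly the content of $\id <_\Gamma h$ recalled above. By definition of $<_\Gamma$ this gives $g <_\Gamma h$, and since also $\id <_\Gamma g$ and $g \neq h$, we conclude $\id <_\Gamma g <_\Gamma h$. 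Thus no element of $P$ is a least element, so $P$ has no least element, and therefore the order topology induced by $<_\Gamma$ is non-discrete. The same argument works verbatim for a generalised ideal arc system, since $<_\Gamma$ is again defined by comparing the images of the arcs in the order of their labels and recording left/right branching.

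I expect no serious obstacle. The only point requiring a little care is that the witness $g$ must simultaneously be positive for $<_\Gamma$ and fix the finite initial segment $\gamma_1, \dots, \gamma_l$, which is the reason for passing to $T_c^{-1}$ rather than using some arbitrary positive element; the rest is the standard existence of essential curves in infinite-type surfaces and the isotopy-invariance bookkeeping built into the definition of $<_\Gamma$.
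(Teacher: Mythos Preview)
Your proposal is correct and follows essentially the same approach as the paper: show that the positive cone has no least element by producing, for any positive $h$, a Dehn twist along a curve disjoint from the first $l$ arcs (where $l$ is the index at which $h$ first moves an arc) that is positive and strictly smaller than $h$. The only minor difference is that the paper simply asserts the left-handed Dehn twist $T_c$ is positive, while you argue more carefully by passing to $T_c^{-1}$ if necessary; your version is slightly more self-contained in this respect.
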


\begin{proof}
It is enough to show that there is no least positive element in $\Mod(S)$ with respect to $<_\Gamma$. Suppose that $f \in \Mod(S)$ is the least positive element. Since $\id <_\Gamma f$, let $i$ be the smallest index such that $f(\gamma_i) \neq \gamma_i$ and  $f(\gamma_i)$ branches off $\gamma_i$ to the left. Since $\cup_{k=1}^i \gamma_k$ is a compact set, it follows that $\cup_{k=1}^i \gamma_k \subset S_\ell$ for some $\ell$. Choose an essential simple closed curve $c$  in $S\setminus S_\ell$.  If $T_c$ denote the left-handed Dehn twist along $c$, then $T_c$ is a positive element  with $T_c(\gamma_k)=\gamma_k$ for all $k \leq i$. Note that $i$ is the smallest index such that $T_c(\gamma_i) \ne f(\gamma_i)$ and $f(\gamma_i)$ branches off  $T_c(\gamma_i)$ to the left. Hence, $T_c <_\Gamma f$, which contradicts the fact that $f$ is the least positive element.
\end{proof}

Given a compact subset $K$ and an open subset $U$ of $S$, let $V(K,U)=\{f \in \Homeo^{+}(S,\partial S) \mid f(K) \subset U\}$. Then the collection $$\mathcal{B}=\{\cap_{l=1}^n V(K_l, U_l) \mid K~\textrm{compact,}~U~\textrm{open,}~ n\in \mathbb N\}$$ forms a basis for the compact open topology on $\Homeo^{+}(S,\partial S)$. Since the quotient map $q:\Homeo^{+}(S,\partial S)\to \Mod(S)$ is an open map, it follows that $q(\mathcal{B})$ is a basis for the quotient topology on $\Mod(S)$. Denoting $q(\cap_{l=1}^n V(K_l, U_l))$ by $\overline{\cap_{l=1}^n V(K_l, U_l)}$, we see that
 \begin{eqnarray*}
 \overline{\cap_{l=1}^n V(K_l, U_l)} &= &\{f \in \Mod(S)\mid f \text{ has a representative } f' \in \Homeo^+(S, \partial S) \text{ such that }\\ 
 & & f'(K_l)\subset U_l \text{ for all } 1 \leq l \leq n\}.
 \end{eqnarray*}

\begin{proposition}\label{prop:finer1}
Let $S$ be a connected oriented infinite-type surface with non-empty boundary and $\Gamma=\{\gamma_k \}_{k \ge 1}$ an ideal arc system for $S$. Then the quotient topology on $\Mod(S)$ is finer than the order topology induced by the left-ordering $<_\Gamma$.
\end{proposition}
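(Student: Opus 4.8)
The plan is to show that every sub-basic open set of the order topology is open in the quotient topology. Since the order topology is generated by the sets $L_g = \{x \in \Mod(S) \mid x <_\Gamma g\}$ and $R_g = \{x \in \Mod(S) \mid g <_\Gamma x\}$ for $g \in \Mod(S)$, and since left-translation by $g$ is a homeomorphism of $\Mod(S)$ with the quotient topology that also respects $<_\Gamma$ (by left-invariance of the order), it suffices to treat the case $g = \id$: that is, to prove that the positive cone $P = R_{\id} = \{f \in \Mod(S) \mid \id <_\Gamma f\}$ and its complement's interior-type set $L_{\id} = \{f \mid f <_\Gamma \id\}$ are open in the quotient topology. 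By the symmetry $f \mapsto f^{-1}$ it is enough to handle $P$.

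So I would fix $f \in P$ and produce a basic quotient-open neighbourhood of $f$ contained in $P$. Let $i$ be the smallest index with $f(\gamma_i) \ne \gamma_i$; then $f(\gamma_i)$ branches off $\gamma_i$ to the left, and (after reducing) $f$ agrees with $\id$ on $\gamma_1, \dots, \gamma_{i-1}$ up to isotopy. The key point is that "branching off to the left at the first disagreeing arc" is a condition that only sees the images of the finitely many compact arcs $\gamma_1, \dots, \gamma_i$ together with a bit of their neighbourhood — it is detected inside a compact subsurface. Concretely, I would choose a compact subsurface $S_\ell$ (from the exhaustion) containing $\gamma_1 \cup \cdots \cup \gamma_i$ in its interior, pick a representative homeomorphism $f'$ of $f$, and then build compact sets $K_1, \dots, K_n$ (small sub-arcs or regular-neighbourhood pieces of $\gamma_1, \dots, \gamma_i$, including an initial segment of $\gamma_i$ witnessing the leftward branching) and open sets $U_1, \dots, U_n$ (small open neighbourhoods of $f'(K_1), \dots, f'(K_n)$, chosen thin enough that membership in them forces: (a) the images of $\gamma_1, \dots, \gamma_{i-1}$ remain isotopic to $\gamma_1, \dots, \gamma_{i-1}$, and (b) the image of the initial segment of $\gamma_i$ still lies strictly to the left of $\gamma_i$). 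Then $f \in \overline{\cap_{l=1}^n V(K_l, U_l)}$, and I claim every $g$ in this basic open set satisfies $\id <_\Gamma g$: for such a $g$, either $g$ already disagrees with $\id$ on some $\gamma_j$ with $j < i$ in a way forced to branch leftward (ruled out by how we chose the $U_l$ for $j<i$ — we force agreement up to isotopy there), or the first disagreement is at $\gamma_i$ and the witnessing segment lies to the left, giving $g \in P$.

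The main obstacle — and the step needing the most care — is the passage from "$g$'s representative sends the compact witnessing data into the prescribed open sets" to "$g(\gamma_i)$, once reduced with respect to $\gamma_i$, genuinely branches off to the left at index $i$." One has to ensure the open sets $U_l$ are chosen fine enough that no isotopy (in particular the reduction process of Proposition~\ref{prop:tight}) can destroy the leftward branching: the branching direction at the common starting point is determined by the germ of the arc there, so taking $K$ to include a short initial sub-arc of $\gamma_i$ emanating from its basepoint, and $U$ a correspondingly thin wedge-shaped neighbourhood on the left side of $\gamma_i$ disjoint from the right side, pins this down. I would also need to verify that forcing agreement up to isotopy on $\gamma_1, \dots, \gamma_{i-1}$ can indeed be arranged with finitely many compact/open pairs — here one uses that these arcs are compact, that being isotopic to a fixed arc is an open condition in the compact-open topology on the image (a standard fact, since a nearby embedded arc with the same endpoints on $\partial S \cup P$ is ambient-isotopic to the original), and Lemma~\ref{lemma:finalex} to normalise. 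Granting these local facts, the neighbourhood $\overline{\cap_{l=1}^n V(K_l,U_l)}$ works, showing $P$ is quotient-open, and hence (translating and taking inverses) that the quotient topology refines the order topology.
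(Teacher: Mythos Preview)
Your reduction to showing that the positive cone $P$ is open in the quotient topology is fine, and you have correctly identified that the relevant data are the finitely many arcs $\gamma_1,\dots,\gamma_i$ where $i$ is the least index with $f(\gamma_i)\neq\gamma_i$. However, the ``wedge'' argument for $\gamma_i$ has a genuine gap. Your proposed neighbourhood forces a representative $g'$ to send a short initial sub-arc of $\gamma_i$ into a thin region on the left of $\gamma_i$, but this does \emph{not} prevent $g(\gamma_i)$ from being isotopic to $\gamma_i$ itself: one can take $\gamma_i$, push its germ at the starting point slightly to the left, and obtain an arc isotopic to $\gamma_i$ whose initial segment sits in your wedge. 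For such a $g$ the first disagreement with $\id$ occurs at some index $>i$, about which your $(K_l,U_l)$ say nothing; in particular $g$ could lie in $L_{\id}$ rather than $P$. Your case split (``disagreement at $j<i$'' versus ``first disagreement at $\gamma_i$'') simply omits this third possibility.

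The paper's proof avoids this by not trying to detect the branching direction directly. Instead it takes $K_l=\gamma_l$ and $U_l=f'(N(\gamma_l))$ for \emph{all} $l\le i$ (in fact up to $\max\{i,j\}$, working with a general interval $(\phi,\psi)$ rather than reducing to $g=\id$), so that any $g$ in the resulting basic open set has $g(\gamma_l)$ isotopic to $f(\gamma_l)$ for every such $l$. In particular $g(\gamma_i)=f(\gamma_i)\neq\gamma_i$ as isotopy classes, and since $f(\gamma_i)$ already branches left of $\gamma_i$, so does $g(\gamma_i)$. This is both simpler and immune to the problem above: you are pinning down the isotopy class of $g(\gamma_i)$, not a germ that can be undone by reduction. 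Your argument is easily repaired by applying to $\gamma_i$ the very same ``isotopic to a fixed arc is an open condition'' fact you already invoke for $\gamma_1,\dots,\gamma_{i-1}$, and dropping the wedge entirely.
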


\begin{proof}
By \cite[Lemma 13.3]{MR3728284}, it is enough to show that for each $f \in \Mod(S)$ and each open interval $(\phi, \psi)$ containing $f$, there exists a basis element $\overline{\cap_{l=1}^n V(K_l, U_l)}$ of the quotient topology such that $f\in \overline{\cap_{l=1}^n V(K_l, U_l)}\subseteq (\phi, \psi)$. Since $\phi <_\Gamma f <_\Gamma \psi$, there are smallest indices  $i$ and $j$ such that $\phi(\gamma_i) \neq f(\gamma_i)$ and $f(\gamma_j)\neq \psi (\gamma_j)$. For each $1 \le l \leq \max\{i,j\}$, let $K_l=\gamma_l$ and $U_l=f'(N(K_l))$, where $N(K_l)$ is a regular neighbourhood of $K_l$ and $f' \in \Homeo^{+}(S,\partial S)$ is a representative of the mapping class $f$. Since $f' \in \cap_{l=1}^{\max\{i,j\}} V(K_l, U_l)$, it follows that $f \in \overline{\cap_{l=1}^{\max\{i,j\}} V(K_l, U_l)}$. We claim that $\overline{\cap_{l=1}^{\max\{i,j\}} V(K_l, U_l)} \subseteq (\phi, \psi)$. If $g \in \overline{\cap_{l=1}^{\max\{i,j\}} V(K_l, U_l)}$, then it has a representative $g' \in \Homeo^{+}(S,\partial S) $ such that $g'(\gamma_l)$ is isotopic to $f'(\gamma_l)$ for all $1 \le l \leq \max\{i,j\}$. This implies that  $\phi(\gamma_k) = f(\gamma_k) = g(\gamma_k)$ for all $k \leq i-1$ and $f(\gamma_i) = g(\gamma_i)$ branches off $\phi(\gamma_i)$ to the left. Similarly, $\psi(\gamma_k) = f(\gamma_k) = g(\gamma_k)$ for all $k \leq j-1$ and $\psi(\gamma_j)$ branches off $ f(\gamma_j) = g(\gamma_j) $ to the left.
Hence, $g\in (\phi, \psi)$, and the proof is complete. 
\end{proof}

In the reverse direction, we have the following result.

\begin{proposition}\label{prop:finer2}
Let $S$ be a connected oriented infinite-type surface with non-empty boundary and $\Gamma=\{\gamma_k \}_{k \ge 1}$ the ideal arc system for $S$ as in \eqref{explicit stable Alexander system gamma}. Then the order topology induced by the left-ordering $<_\Gamma$  is finer than the quotient topology on $\Mod(S)$.
\end{proposition}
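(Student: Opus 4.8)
The plan is to show that every sub-basic open set of the order topology, i.e.\ every half-line $L_\phi = \{g \in \Mod(S) \mid g <_\Gamma \phi\}$ and $R_\psi = \{g \in \Mod(S) \mid \psi <_\Gamma g\}$, is open in the quotient topology. Fix $\psi \in \Mod(S)$ and an element $f$ with $\psi <_\Gamma f$; I want a basic quotient-open neighbourhood $\overline{\cap_{l=1}^n V(K_l, U_l)}$ of $f$ that is entirely contained in $R_\psi$ (and symmetrically for $L_\phi$). Let $j$ be the smallest index with $\psi(\gamma_j) \neq f(\gamma_j)$, so that $f(\gamma_j)$ branches off $\psi(\gamma_j)$ to the left. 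The naive attempt ``take $K_l = \gamma_l$, $U_l = f'(N(\gamma_l))$ for $l \le j$'' is \emph{not} enough here: knowing $g'(\gamma_l)$ is isotopic to $f'(\gamma_l)$ only pins down the isotopy class of $g(\gamma_l)$, and that does force $g(\gamma_k) = f(\gamma_k)$ for $k \le j$, but then $g$ could still satisfy $g = f$ on the first $j$ arcs yet have $g(\gamma_{j+1})$ lie so that $g <_\Gamma f$ --- which is fine, but I also need to rule out $g <_\Gamma \psi$, and that is automatic once $g(\gamma_j) = f(\gamma_j)$ branches off $\psi(\gamma_j)$ to the left. So actually the first $j$ arcs \emph{do} suffice for $R_\psi$: any $g$ with $g(\gamma_l) \simeq f(\gamma_l)$ for $l \le j$ has $g(\gamma_k) = \psi(\gamma_k)$ for $k < j$ and $g(\gamma_j)$ branching off $\psi(\gamma_j)$ to the left, hence $\psi <_\Gamma g$. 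This is the same mechanism as in Proposition~\ref{prop:finer1}, run in one direction only.

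The genuinely new content --- and the reason the statement requires the \emph{specific} $\Gamma$ of \eqref{explicit stable Alexander system gamma} rather than an arbitrary ideal arc system --- is that I must also show the order topology is at least as fine as the quotient topology \emph{with a countable neighbourhood basis built from the $\gamma_k$'s}; equivalently, that a basic quotient-open set $\overline{\cap_{l=1}^n V(K_l, U_l)}$ is a union of order-open sets. Here I would exploit the structure of $\Gamma$: because $\Gamma = \cup_k \Gamma_k$ and $\Gamma_k$ restricted appropriately is a \emph{finite stable Alexander system} for each finite-type piece $S_k$ (this was established in Section~\ref{Orderability of big MCG}), the finitely many compact sets $K_1, \dots, K_n$ are contained in some $S_\ell$, and the finitely many arcs $\gamma_1, \dots, \gamma_{k_\ell}$ of $\Gamma$ lying in $S_\ell$ form a finite stable Alexander system there. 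Given $f \in \overline{\cap_l V(K_l, U_l)}$ with representative $f'$, I would argue that the condition $f'(K_l) \subset U_l$ can be detected by, or refined to, a condition on how $f'$ maps the finitely many arcs $\gamma_1, \dots, \gamma_{k_\ell}$: there is some $m = m(f)$ such that the entire set $\{g \in \Mod(S) \mid g(\gamma_r) = f(\gamma_r) \text{ for all } r \le m\}$ is contained in $\overline{\cap_l V(K_l, U_l)}$, using Lemma~\ref{lemma:finalex} to upgrade ``agrees up to isotopy on the arcs'' to ``agrees up to an isotopy of $S$'' and hence that such $g$ has a representative close to $f'$ on $K_l$. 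Then $\{g \mid g(\gamma_r) = f(\gamma_r),\ r \le m\}$ is an order-interval: it equals $(\phi_m, \psi_m)$ where $\phi_m, \psi_m$ are obtained from $f$ by altering the $(m{+}1)$-st arc minimally to the right and to the left (concretely, $\phi_m = f \circ (\text{small twist making } \gamma_{m+1} \text{ branch right})$, $\psi_m$ the analogous left version), so it is order-open, and $f$ lies in it.

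I would organise the write-up as: (1) reduce to showing each $\overline{\cap_l V(K_l,U_l)}$ is order-open, using \cite[Lemma 13.3]{MR3728284} again; (2) fix $f$ in this set, locate $\ell$ with all $K_l \subset S_\ell$; (3) produce the index $m$ and the inclusion $\{g : g(\gamma_r) \simeq f(\gamma_r),\ r\le m\} \subseteq \overline{\cap_l V(K_l,U_l)}$ --- this is the step that uses finiteness of the Alexander system on $S_\ell$ together with Lemma~\ref{lemma:finalex} and compactness of the $K_l$'s, and it is where I expect the real work; (4) observe this ``agreement'' set is an open interval for $<_\Gamma$, hence $f$ has an order-open neighbourhood inside the given quotient-basic set. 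Combined with Proposition~\ref{prop:finer1}, this yields $\Mod(S)$ with the quotient topology equals $\Mod(S)$ with the order topology of $<_\Gamma$, i.e.\ Theorem~\ref{quotient topology is order induced}.

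\textbf{Main obstacle.} The crux is step (3): showing that membership in $\overline{\cap_l V(K_l,U_l)}$ --- a condition about a homeomorphism mapping arbitrary compacta into arbitrary opens --- is implied by the purely combinatorial condition of sending the first $m$ arcs of $\Gamma$ to prescribed isotopy classes, for a suitable finite $m$ depending on $f$ and on the data $(K_l, U_l)$. This needs: the arcs $\gamma_1, \dots, \gamma_{k_\ell}$ fill $S_\ell$ up to a disk (curve-diagram property) so their isotopy classes determine the restriction $f'|_{S_\ell}$ up to isotopy rel $\partial S_\ell$; Lemma~\ref{lemma:finalex} to realise the agreement by an actual ambient isotopy supported near $S_\ell$; and a compactness argument to absorb the $K_l$'s into some larger $S_\ell$ and to control the images $U_l = f'(N(K_l))$ under the ambient isotopy. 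Getting the quantifiers right --- $m$ depends on $f$, but the interval $(\phi_m,\psi_m)$ must be genuinely order-open and contain $f$ --- is the delicate bookkeeping.
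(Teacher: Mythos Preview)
Your substantive plan (paragraphs 2--4) is correct and is exactly the paper's argument: locate $S_\ell$ containing all $K_l$, use that the arcs of $\Gamma_{\ell+1}$ restricted to $S_\ell$ form a finite stable Alexander system there, and show that the set of $g$ agreeing with $f$ on those finitely many arcs is an order-interval contained in the given quotient-basic set.

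Two remarks. First, your opening paragraph runs in the wrong direction: showing that half-lines $R_\psi$, $L_\phi$ are quotient-open proves the quotient topology is \emph{finer} than the order topology, which is Proposition~\ref{prop:finer1}, not the present statement. You notice this yourself at the end of that paragraph, so simply delete it. Second, you overestimate the difficulty of step (3). Once $g$ agrees with $f$ (in isotopy class) on the finite stable Alexander system of $S_\ell$, Lemma~\ref{lemma:finalex} together with the stability gives a representative $g'$ with $g'|_{S_\ell} = f'|_{S_\ell}$ as \emph{maps}, not merely up to isotopy; then $g'(K_l) = f'(K_l) \subset U_l$ is immediate, with no additional compactness or absorption argument needed. (Note also that here the $U_l$ are arbitrary open sets, not the special tubular neighbourhoods $f'(N(K_l))$ from the proof of Proposition~\ref{prop:finer1}, so drop that notation.) The paper handles this step in one line; the only point it leaves implicit is the \emph{existence} of $\phi,\psi$ with $\phi <_\Gamma f <_\Gamma \psi$ agreeing with $f$ on the first $m$ arcs, and your twist construction for $\phi_m,\psi_m$ is a correct way to supply that.
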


\begin{proof}
 Let $\{S_k \}_{k \ge 1}$ be the sequence of finite-type  subsurfaces of $S$ that satisfies the conditions of Proposition \ref{prop:cmpexh} and  $\Gamma=\{\gamma_k \}_{k \ge 1}$ the ideal arc system as in \eqref{explicit stable Alexander system gamma}. It is enough to show that for each $f \in \Mod(S)$ and each basis $\overline{\cap_{l=1}^n V(K_l, U_l)}$ containing $f$, there exists an open interval $(\phi, \psi)$ such that $f\in (\phi, \psi) \subset \overline{\cap_{l=1}^n V(K_l, U_l)}$. Let $S_{k}$ be the subsurface such that $K_l \subset S_{k}$  for each $1\leq l \leq n$. It follows from the construction of $\Gamma$ that the collection $S_k \cap (\cup_{\gamma\in \Gamma_{k+1}} \gamma)$ is a finite stable Alexander system for $S_{k}$.  Choose $\phi, \psi \in \Mod(S)$ such that $\phi< f< \psi$ and $\phi(\gamma_j)=\psi(\gamma_j)=f(\gamma_j)$ for all $\gamma_j \in \Gamma_{k +1}$. Then, for each $\eta \in (\phi, \psi)$, we have $\phi(\gamma_j)=\psi(\gamma_j)=f(\gamma_j)=\eta(\gamma_j)$ for all  $\gamma_j \in \Gamma_{k +1}$. Thus, there are representatives $f'$ and $\eta'$ of $f$ and $\eta$, respectively, such that $\eta' = f'$ on $S_{k}$, and hence $\eta'(K_l)=f'(K_l)\subset U_l$ for all $1\leq l\leq n$. This implies that $\eta \in \overline{\cap_{l=1}^n V(K_l, U_l)}$, and the proof is complete.
\end{proof}

Propositions \ref{prop:finer1}~and~\ref{prop:finer2} leads to the following result.

\begin{theorem}\label{quotient topology is order induced}
Let $S$ be a connected oriented infinite-type surface with non-empty boundary and let $\Gamma$ be the ideal arc system for $S$ as in \eqref{explicit stable Alexander system gamma}. Then the quotient topology on $\Mod(S)$ is the same as the order topology on $\Mod(S)$ induced by $<_\Gamma$.
\end{theorem}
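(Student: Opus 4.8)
The plan is to deduce the theorem by a sandwiching argument: show that the quotient topology on $\Mod(S)$ refines the order topology induced by $<_\Gamma$, and that the order topology refines the quotient topology; since each is finer than the other, they coincide. These two inclusions are precisely Propositions \ref{prop:finer1} and \ref{prop:finer2}, so the final step is a one-line combination. It is worth spelling out, however, what drives each inclusion and where the real content lies.

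For the direction ``quotient finer than order'', by \cite[Lemma 13.3]{MR3728284} it suffices, given $f \in \Mod(S)$ and an order-open interval $(\phi, \psi) \ni f$, to exhibit a basic quotient-open set $\overline{\cap_{l=1}^{n} V(K_l, U_l)}$ with $f \in \overline{\cap_{l=1}^{n} V(K_l, U_l)} \subseteq (\phi, \psi)$. The key observation is that membership of a class $g$ in $(\phi, \psi)$ is detected by the images of only finitely many arcs $\gamma_1, \dots, \gamma_m$, where $m = \max\{i,j\}$ and $i$, $j$ are the first indices at which $\phi$ and $f$, respectively $f$ and $\psi$, differ. One then takes $K_l = \gamma_l$ and $U_l = f'(N(\gamma_l))$ a regular neighbourhood image under a representative $f'$ of $f$; any $g$ with a representative carrying each $\gamma_l$ into $U_l$ has $g(\gamma_l)$ isotopic to $f(\gamma_l)$ for $l \le m$, which forces the branching data to match that of $f$ and hence places $g$ in $(\phi, \psi)$.

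For the reverse direction ``order finer than quotient'', which is where the specific inductively constructed $\Gamma$ of \eqref{explicit stable Alexander system gamma} is indispensable, one starts from a basic quotient-open set $\overline{\cap_{l=1}^{n} V(K_l, U_l)} \ni f$ and picks a subsurface $S_k$ containing all the compact sets $K_l$. The crucial structural input is that $S_k \cap \bigl(\cup_{\gamma \in \Gamma_{k+1}} \gamma\bigr)$ is a \emph{finite} stable Alexander system for $S_k$ --- this uses condition (5) of Proposition \ref{prop:cmpexh} together with the nesting $\Gamma_k \subset \Gamma_{k+1}$ built into the construction --- so that any class agreeing with $f$ on every arc of $\Gamma_{k+1}$ agrees with $f$ up to isotopy on all of $S_k$, by Lemma \ref{lemma:finalex}, and therefore lies in $\overline{\cap_{l=1}^{n} V(K_l, U_l)}$. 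Choosing $\phi <_\Gamma f <_\Gamma \psi$ whose first points of disagreement with $f$ occur only beyond all arcs of $\Gamma_{k+1}$ produces an interval $(\phi, \psi)$ every element of which agrees with $f$ on $\Gamma_{k+1}$, hence sits inside the given basic set.

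The main obstacle, and the reason the theorem is stated only for the arc system of \eqref{explicit stable Alexander system gamma} rather than for an arbitrary generalised ideal arc system, is exactly this second inclusion: a general $\Gamma$ need not have the feature that some finite initial segment of its arcs forms a stable Alexander system for a finite-type subsurface exhausting $S$, so the argument genuinely relies on the compatibility between the exhaustion $\{S_k\}$ and the nested curve diagrams $\{\Gamma_k\}$ --- in particular on the no-annulus condition (5) --- that makes $S_k \cap \Gamma_{k+1}$ a finite stable Alexander system for $S_k$. Once that structural fact is available, passing between ``agrees on finitely many arcs of $\Gamma$'' and ``agrees on a finite-type subsurface up to isotopy'', and translating either statement into a neighbourhood in the respective topology, is routine.
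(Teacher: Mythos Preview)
Your proposal is correct and follows exactly the paper's approach: the theorem is deduced immediately from Propositions \ref{prop:finer1} and \ref{prop:finer2}, and your expanded account of each inclusion faithfully reproduces the arguments given there, including the role of the finite stable Alexander system $S_k \cap \Gamma_{k+1}$ for $S_k$ in the second inclusion. Your additional remarks on why the specific $\Gamma$ of \eqref{explicit stable Alexander system gamma} is needed, and your explicit invocation of Lemma \ref{lemma:finalex} to pass from agreement on $\Gamma_{k+1}$ to agreement on $S_k$, make explicit what the paper leaves implicit, but the underlying strategy is identical.
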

\medskip

\begin{ack}
We would like to thank Diana Hubbard for bringing to our attention both her work on the left-orderability of the big mapping class group \cite{bigsmall} and the work of Calegari \cite{MR2172491}. Pravin Kumar is supported by the PMRF fellowship at IISER Mohali. Mahender Singh is supported by the Swarna Jayanti Fellowship grants DST/SJF/MSA-02/2018-19 and SB/SJF/2019-20/04. Apeksha Sanghi acknowledges post-doctoral fellowship from the grant SB/SJF/2019-20/04. 
\end{ack}

\section{Declaration}
The authors declare that they have no conflicts of interest and that there is no data associated with this paper.

\bibliographystyle{plain}

\begin{thebibliography}{1}

\bibitem{MR4584864} S. Agrawal, T. Aougab, Y. Chandran, M.  Loving, J. R. Oakley, R. Shapiro and Y. Xiao,  \textit{ Automorphisms of the k-curve graph}, Michigan Math. J. 73 (2023), no. 2, 305--343.

\bibitem{MR4264585} J. Aramayona and N. Vlamis, \textit{ Big mapping class groups: an overview}, In the tradition of Thurston--geometry and topology, 459--496, Springer, Cham, (2020).

\bibitem{arXiv:2103.16702} C. J. Bishop and  L. Rempe,  \textit{Non-compact Riemann surfaces are equilaterally triangulable}, arXiv:2103.16702.

\bibitem{MR2141698} S. Boyer, D. Rolfsen and B. Wiest, \textit{Orderable 3-manifold groups}, Ann. Inst. Fourier (Grenoble) 55 (2005), no. 1, 243--288.

\bibitem{MR2172491} D. Calegari, \textit{Circular groups, planar groups, and the Euler class}, Proceedings of the Casson Fest, 431--491, Geom. Topol. Monogr., 7, Geom. Topol. Publ., Coventry, 2004.

\bibitem{MR3560661} A. Clay and D. Rolfsen, \textit{Ordered groups and topology}, Graduate Studies in Mathematics, 176. American Mathematical Society, Providence, RI, 2016. x+154 pp.	

\bibitem{MR1214782} P. Dehornoy, \textit{Braid groups and left distributive operations}, Trans. Amer. Math. Soc. 345 (1994), no. 1, 115--150.

\bibitem{MR2463428} P. Dehornoy, I. Dynnikov, D. Rolfsen and B. Wiest, \textit{Ordering braids}, Mathematical Surveys and Monographs, 148, American Mathematical Society, Providence, RI, 2008, x+323 pp.

\bibitem{MR0975081} M. Falk and R. Randell, \textit{Pure braid groups and products of free groups}, Contemp. Math. 78 (1988), 217--228.

\bibitem{MR2850125} B. Farb and D. Margalit, \textit{A primer on mapping class groups}, Princeton Mathematical Series, 49. Princeton University Press, Princeton, NJ, 2012. xiv+472 pp.

\bibitem{bigsmall} P. Feller, D. Hubbard and H. Turner, \textit{The Dehn twist coefficient for big and small mapping class groups}, (2023), https://arxiv.org/abs/2308.06214.

\bibitem{MR1725462}  R. Fenn, M. T. Greene, D. Rolfsen, C. Rourke and B. Wiest, \textit{Ordering the braid groups}, Pacific J. Math. 191 (1999), no. 1, 49--74.


\bibitem{MR4029627} J. Hern\'{a}ndez Hern\'{a}ndez, I. Morales and F. Valdez,  \textit{The Alexander method for infinite-type surfaces}, Michigan Math. J. 68 (2019), no.4, 743--753.

\bibitem{MR3728284} J. R. Munkres, \textit{ Topology}, {Second edition}, Prentice Hall, Inc., Upper Saddle River, NJ, 2000. xvi+537 pp.

\bibitem{MR1990838} B. Perron and D. Rolfsen, \textit{On orderability of fibred knot groups}, Math. Proc. Cambridge Philos. Soc. 135 (2003), no. 1, 147--153.

\bibitem{Rado} T. Rad\'{o}, \textit{\"{U}ber den Begriff der Riemannschen Fl\"{a}che}, Acta Litt. Sci. Szeged 2 (1925), 101--121. 

\bibitem{MR1756636} C. Rourke and B. Wiest, \textit{Order automatic mapping class groups}, Pacific J. Math. 194 (2000), no. 1, 209--227.			

\bibitem{MR1805402}  H. Short and B. Wiest, \textit{ Orderings of mapping class groups after Thurston}, Enseign. Math. 46 (2000), no.3--4, 279--312.

\end{thebibliography}

\bigskip

 \end{document}